\newcommand{\R}{\mathbb{R}}
\newtheorem{theo}{Theorem}[section]
\newtheorem{coro}[theo]{Corollary}
\newtheorem{lemma}[theo]{Lemma}
\newtheorem{prop}[theo]{Proposition}
\theoremstyle{definition}
\newtheorem{exam}[theo]{Example}
\newcommand{\cM}{{\mathcal M}}   
\newcommand{\cR}{{\mathcal R}}   
\renewcommand{\S}{{\mathbb S}^{N-1}}
\newcommand{\eps}{\varepsilon}      
\renewcommand{\phi}{\varphi}
\def\sideremark#1{\ifvmode\leavevmode\fi\vadjust{\vbox to0pt{\vss
 \hbox to 0pt{\hskip\hsize\hskip1em
 \vbox{\hsize2.1cm\tiny\raggedright\pretolerance10000
  \noindent #1\hfill}\hss}\vbox to15pt{\vfil}\vss}}}%
\numberwithin{equation}{section}
\title{
Symmetry properties of sign-changing solutions to nonlinear parabolic equations in unbounded domains
}
\author{
Juraj F\"{o}ldes\footnote{Department of Mathematics, University of Virginia
322 Kerchof Hall, Charlottesville, VA 22904-4137, foldes@virginia.edu.}, 
Alberto Salda\~{n}a\footnote{Instituto de Matem\'aticas, Universidad Aut\'onoma de M\'exico, Circuito Exterior, Ciudad Universitaria, 04510 Coyoac\'an, Ciudad de M\'exico, M\'exico, alberto.saldana@im.unam.mx.},\ \&
Tobias Weth\footnote{Institut f\"{u}r Mathematik, Johann Wolfgang Goethe-Universit\"{a}t Frankfurt, Robert-Mayer-Str. 10, D-60054 Frankfurt, weth@math.uni-frankfurt.de.} 
}
\date{}
\begin{document}
\maketitle

\begin{abstract}
We study the asymptotic (in time) behavior of positive and sign-changing solutions to nonlinear parabolic problems in the whole space or in the exterior of a ball with Dirichlet boundary conditions. We show that, under suitable regularity and stability assumptions, solutions are asymptotically (in time) foliated Schwarz symmetric, i.e., all elements in the associated omega-limit set are axially symmetric with respect to a common axis passing through the origin and are nonincreasing in the polar angle.  We also obtain symmetry results for solutions of Hénon-type problems, for equilibria (i.e. for solutions of the corresponding elliptic problem), and for time periodic solutions.
\end{abstract}

Mathematics Subject Classification (2020): 35B40, 35B30, 35B07

Keywords: asymptotic symmetry, nodal solutions, exterior domains.
\section{Introduction}
In this paper we study the asymptotic behavior of (possibly sign-changing) classical uniformly bounded solutions of
\begin{equation}\label{model}
\begin{aligned}
 u_t - \Delta u &=  f(t,|x|,u) && \qquad (x,t) \in \Sigma \times (0,\infty),\\
    u(x,t) &= 0  && \qquad (x,t)\in \partial \Sigma \times (0,\infty),\\
   \ \ u(x,0) &= u_0(x) && \qquad x\in \Sigma,
\end{aligned}
\end{equation}
where $\Sigma$ is either the whole space $\mathbb R^N$ or the complement of a ball $\R^N\backslash B_R(0)$ in dimension $N\geq 2$ for some $R>0$.  The initial profile $u_0$ and the nonlinearity $f$ satisfy some regularity and stability assumptions detailed below.

A solution $u$ is said to be asymptotically symmetric if all the elements in the $\omega$-limit set $\omega(u)$, defined as 
\begin{equation}\label{omega}
 \omega(u):=\{z\in C_0(\overline{\Sigma}) :
 z(x)=\lim_{n\to\infty} u(x,t_n) \text{ for $x\in\Sigma$ and some } t_n\to\infty\},
\end{equation}
share some symmetry.  Here, $C_0(\overline{\Sigma})$ is the space of continuous functions which decay to zero at infinity and vanish on $\partial \Sigma$, equipped with the supremum norm.  
Standard parabolic estimates (see Lemma~\ref{regularity:lemma}) yield that
\begin{align}\label{omega:unif}
\lim_{t\to\infty}\operatorname{dist}_{C_0(\overline{\Sigma}
)}(u(\cdot,t),\omega(u))=0\,,
\end{align}
and therefore the asymptotic symmetry implies that the solution is more and more symmetric as $t \to \infty$.  

We are interested in a particular kind of symmetry sometimes referred to as foliated Schwarz symmetry.  We say that a function $u\in C(\Sigma)$ is \textit{foliated Schwarz symmetric with respect to some unit vector $p\in \S =\{x\in\mathbb R^N: |x|=1\}$}, if $u$ is axially symmetric with respect to the axis $\mathbb R p$ and nonincreasing in the polar angle $\theta:= \operatorname{arccos}(\frac{x}{|x|}\cdot p)\in [0,\pi].$ If $u$ is strictly decreasing in $\theta$, then we say that $u$ is \emph{strictly foliated Schwarz symmetric}. 

From the asymptotic symmetry point of view, the results we present in this paper are, as far as we know, the first to consider sign-changing solutions in unbounded domains, non-monotone spatial dependences on the nonlinearity, and unbounded domains different from $\R^N$ (we give an account of previously known results below).   In this more general setting, however, we need to impose a geometric assumption on the initial profile $u_0$ to guarantee that all functions in the $\omega$-limit set are foliated Schwarz symmetric. 

Before we state our theorems in full generality, we illustrate our results with two paradigmatic particular cases.  First, we exploit the (non-monotone) spatial dependence of the coefficients of a nonautonomous Hénon-type problem to guarantee a symmetrizing effect as $t\to\infty.$ 

\begin{theo}\label{thm1}
 Let $\Sigma$ be either $\R^N$ or $\R^N\backslash B_1(0)$, $a,b\in C([0,\infty))\cap L^\infty([0,\infty))$, and let $\eta>0$ be such that
\begin{align}\label{eta:intro}
\inf\limits_{t>0}b\geq \eta. 
\end{align}
 Let $p>1$, $0\leq \alpha<\beta$, and let $u\in C^{2,1}(\Sigma\times(0,\infty))\cap C(\overline{\Sigma}\times[0,\infty))\cap L^\infty(\Sigma\times[0,\infty))$ be a solution of
\begin{equation}\label{p1}
\begin{aligned}
 u_t - \Delta u &= a(t)|x|^\alpha\, |u|^{p-1}u-b(t)|x|^\beta\, u && \qquad    (x, t) \in \Sigma\times (0,\infty),\\
 u &=0 && \qquad (x, t) \in \partial \Sigma\times(0,\infty),\\
 u(x,0)&=u_0(x)&& \qquad  x\in \Sigma,
\end{aligned}
\end{equation}
where $u_0\in L^\infty(\Sigma)\cap C(\overline{\Sigma})$ satisfies that
 \begin{equation}\label{U0intro}
 \begin{aligned} 
u_0(x_1,x_2,\ldots,x_N)&\geq u_0(-x_1,x_2,\ldots,x_N)\quad \text{ for all $x\in \Sigma$ with $x_1>0$,}\\
u_0(x_1,x_2,\ldots,x_N)&>u_0(-x_1,x_2,\ldots,x_N)\quad \text{ for some $x\in \Sigma$ with $x_1>0$.}
\end{aligned}  
 \end{equation}
Then,  $u$ is asymptotically foliated Schwarz symmetric, that is, there is $p\in \S$ such that $z$ is foliated Schwarz symmetric with respect to $p$ for all $z\in\omega(u)$.
\end{theo}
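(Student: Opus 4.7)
The plan is to adapt the parabolic method of moving hyperplanes to the unbounded setting of \eqref{p1}, exploiting the coercivity of the damping term $-b(t)|x|^\beta$ to validate comparison principles on half-spaces, and then to translate one-directional reflectional monotonicity into foliated Schwarz symmetry of every element of $\omega(u)$. For any unit vector $e \in \S$, let $\sigma_e$ denote the reflection across $\{x \cdot e = 0\}$ and set $H_e := \{x \cdot e > 0\}$. Since $\Sigma$, the weights $|x|^\alpha, |x|^\beta$, and the Dirichlet condition are all invariant under $\sigma_e$, the reflected function $v_e(x,t) := u(\sigma_e(x),t)$ solves the same problem \eqref{p1}. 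Applying the mean value theorem to $s \mapsto |s|^{p-1}s$ and using the $L^\infty$-bound on $u$, the difference $w_e := u - v_e$ satisfies a linear parabolic equation
\begin{equation*}
\partial_t w_e - \Delta w_e = \big(a(t)|x|^\alpha \gamma(x,t) - b(t)|x|^\beta\big)\, w_e \quad \text{on } (\Sigma \cap H_e) \times (0,\infty),
\end{equation*}
with uniformly bounded $\gamma$, zero initial data comparison coming from \eqref{U0intro}, and $w_e$ vanishing on the lateral boundary $\partial\Sigma \cap \overline{H_e}$ and on $\overline{\Sigma}\cap\{x\cdot e = 0\}$.

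Hypothesis \eqref{eta:intro} and $\beta>\alpha$ give
\begin{equation*}
a(t)|x|^\alpha \gamma(x,t) - b(t)|x|^\beta \;\leq\; C|x|^\alpha - \eta|x|^\beta \;\longrightarrow\; -\infty \quad \text{as } |x|\to\infty,
\end{equation*}
uniformly in $t\ge 0$. Combined with the boundedness of $w_e$, this coercive damping yields a parabolic weak maximum principle on the unbounded domain $\Sigma \cap H_e$ (for instance via $\tilde w_e := e^{-Mt} w_e$ with $M$ larger than the supremum of the coefficient, followed by a truncation outside a large ball where the coefficient is strongly negative). For $e=e_1$, assumption \eqref{U0intro} gives $w_{e_1}(\cdot,0) \ge 0$ and not identically zero, so the weak maximum principle yields $w_{e_1} \ge 0$ on $(\Sigma \cap H_{e_1}) \times (0,\infty)$, and the parabolic strong maximum principle (applied on interior compact subsets) promotes this to strict positivity for $t>0$.

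By Lemma~\ref{regularity:lemma} and \eqref{omega:unif}, $\omega(u)$ is a nonempty, compact, connected subset of $C_0(\overline{\Sigma})$, and passing to the limit in the previous step, every $z\in\omega(u)$ satisfies $z(x) \ge z(\sigma_{e_1}(x))$ on $\Sigma \cap H_{e_1}$. To upgrade this one-directional monotonicity to foliated Schwarz symmetry with a \emph{common} axis $p \in \S$, I invoke the known reflectional characterization: a function $z \in C_0(\overline{\Sigma})$ is foliated Schwarz symmetric with respect to axis $\R p$ iff for every $e\in\S$ one of $z \ge z\circ\sigma_e$ or $z \le z\circ\sigma_e$ holds on $\Sigma\cap H_e$, and the set of such reflecting directions is a closed hemisphere about $p$ (or all of $\S$ in the radial case). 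The rotational invariance of \eqref{p1} together with Step 2 propagates the reflectional monotonicity across $\S$ in the limit $t\to\infty$, and the connectedness of $\omega(u)$ forces a single axis $p$ that works for all $z\in\omega(u)$.

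The principal technical difficulty is the parabolic weak maximum principle on the unbounded domain $\Sigma \cap H_e$: classical statements require either bounded domains or spatial decay of the solution, whereas here $w_e$ is only uniformly bounded. It is precisely the coercivity of the zero-order coefficient that rescues the argument, and the hypotheses $\inf_t b \ge \eta > 0$ and $\beta > \alpha$ are essential rather than cosmetic. A secondary, more subtle step is the extraction of a \emph{common} axis $p\in\S$ serving all elements of $\omega(u)$; this combines the rotational invariance of \eqref{p1} with the compactness and connectedness of $\omega(u)$, and will likely be encapsulated in a general foliated Schwarz symmetry lemma proved earlier in the paper.
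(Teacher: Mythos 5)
Your proposal establishes the one-sided reflectional inequality $z \ge z\circ\sigma_{e_1}$ on $\Sigma\cap H_{e_1}$ for every $z\in\omega(u)$ (and the unbounded-domain maximum principle via coercive damping is a reasonable way to get there), but then it stops short of the actual crux of the theorem. The step claiming that \emph{``the rotational invariance of \eqref{p1} together with Step 2 propagates the reflectional monotonicity across $\S$''} does not hold: while the PDE is rotation invariant, the Cauchy problem is not, because $u_0$ satisfies the reflectional inequality in \eqref{U0intro} only for the single direction $e_1$. For $e\neq e_1$ you have no control on the sign of $w_e(\cdot,0)$, so the maximum-principle argument from $t=0$ gives nothing, and a single reflectional inequality in one direction is very far from foliated Schwarz symmetry. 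Connectedness of $\omega(u)$ cannot rescue this, as connectedness says nothing about the set of admissible reflection directions.

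What your argument is missing is the entire \emph{rotating-plane} mechanism that is the heart of the paper's proof of Theorem~\ref{strong:thm} (to which Theorem~\ref{thm1} is reduced). Concretely, the paper first shows openness of the set of admissible directions via a Harnack-plus-subsolution perturbation argument (Lemma~\ref{teopoho}, using Proposition~\ref{polaciku}): starting from $e_1$, one rotates the plane slightly and shows that after a large time the perturbed difference $\hat w_{e'}$ is still controlled, using the change of variables in Lemma~\ref{T:l} to make the zeroth-order coefficient negative near $\partial\Sigma_1$ and the explicit subsolution in Lemma~\ref{subsolb} inside a fixed compact ball. Then Lemma~\ref{lobolema} pushes the rotation until the reflectional inequality degenerates to an equality, which identifies a symmetry hyperplane for all $z\in\omega(u)$; only then does Proposition~\ref{sec:char-foli-schw-1} (the reflectional characterization you allude to) apply, since its hypothesis requires inequalities on a whole arc of directions, not just one. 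Separately, the paper's proof of Theorem~\ref{thm1} must first establish the uniform decay hypothesis $(U_1)$ via comparison with explicit supersolutions of the form $\|u\|_\infty e^{r_1^\gamma-|x|^\gamma t}$, and then verify $(f_0)$, $(f_1)'$, and the quantitative strong-stability condition $(f_2)'$; your coercivity remark captures the spirit of $(f_2)'$ but not its precise role in building the subsolution needed for the rotation step.
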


Observe that the direction of the symmetry axis $p$ is not determined by the equation, which is invariant under rotations, $p$ is determined by $u_0$. Theorem \ref{thm1} is a consequence of the more general Theorem \ref{strong:thm} below.

\medskip

Our results also cover translationally invariant problems, where we can show that (possibly sign-changing) solutions are asymptotically signed and radially symmetric with respect to \emph{some} center. Unlike for Hénon-type problems, a center of symmetry is not fixed a priori if the equation is translationally invariant; and therefore a suitable assumption on the initial condition $u_0$ plays a key role to guarantee the asymptotic symmetry. In the next section we present more general conditions on $u_0$; however, to simplify the presentation and to illustrate the main ideas, in the next theorem we assume that the supports of the positive and the negative parts of $u_0$ are strictly separated by the cones $K^+:=\{x=(x_1,x')\in\R^N\::\: x_1>1\text{ and }|x'|<|x_1-1|\}$ and $K^-:=\{x=(x_1,x')\in\R^N\::\: x_1<-1\text{ and }|x'|<|x_1+1|\}$, see Figure \ref{fig11} below. 

\begin{theo}\label{thm2}
 Let $p>1$ and $a,b\in C([0,\infty))\cap L^\infty([0,\infty))$ such that \eqref{eta:intro} holds. Let 
 $u\in C^{2,1}(\R^N\times(0,\infty))\cap C(\R^N\times[0,\infty))\cap L^\infty(\R^N\times[0,\infty))$ be a solution of
\begin{equation}\label{p2}
\begin{aligned}
 u_t - \Delta u + b(t)\, u &= a(t)\, |u|^{p-1}u \qquad && (x, t) \in \R^N\times (0,\infty),\\
  u(x,0)&=u_0(x) \qquad && x\in \R^N,
\end{aligned}
\end{equation}
where $u_0\in C_0(\R^N)\backslash\{0\}$ is such that
\begin{align}\label{u0h}
\{x\in\R^N\::\: u_0(x)>0\}\subset K^+,\quad\{x\in\R^N\::\: u_0(x)<0\}\subset K^- \,.
\end{align}
Assume furthermore that $u$ has uniform decay at spatial infinity
\begin{align}\label{udintro}
\lim\limits_{|x|\to\infty} \sup\limits_{t>0} u(x,t)=0. 
\end{align}
If $0\not\in\omega(u)$, then $u$ is asymptotically signed and radially symmetric, that is, there is $q\in \R^N\backslash\{x_1=0\}$ such that all elements in $\omega(u)$ are radially symmetric with respect to $q$ and
\begin{enumerate}
 \item $z>0$ in $\R^N$ for all $z\in\omega(u)$, if $q\in\{x_1>0\}$,
 \item $z<0$ in $\R^N$ for all $z\in\omega(u)$, if $q\in\{x_1<0\}$.
\end{enumerate}

\end{theo}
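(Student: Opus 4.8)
The plan is to apply the general machinery behind Theorem \ref{thm1} (the forthcoming Theorem \ref{strong:thm}) to deduce foliated Schwarz symmetry with respect to some $p\in\S$ for every $z\in\omega(u)$, and then to upgrade this to radial symmetry about a shifted center by exploiting the separating-cone hypothesis \eqref{u0h} together with the translation invariance of \eqref{p2}. First I would observe that \eqref{u0h} is much stronger than a single reflection inequality of the form \eqref{U0intro}: for \emph{every} hyperplane $H$ that strictly separates $K^+$ from $K^-$ (equivalently, every affine hyperplane $H=\{x\cdot e=\lambda\}$ with $K^+\subset H^+$ and $K^-\subset H^-$, of which there is a full open cone's worth of directions $e$ through a neighborhood of $e_1$ and a range of $\lambda$), the reflected initial datum satisfies $u_0\geq u_0\circ\sigma_H$ on $H^+$, where $\sigma_H$ is the reflection across $H$. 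Since the equation \eqref{p2} is both rotation- and translation-invariant, for each such $H$ the corresponding version of the asymptotic symmetry argument applies and yields that every $z\in\omega(u)$ satisfies $z\geq z\circ\sigma_H$ on $H^+$. The uniform decay \eqref{udintro} is exactly what is needed to carry the moving-plane / reflection comparison through on the unbounded domain (it guarantees the relevant maximum-principle-type arguments in unbounded regions, controlling behavior at spatial infinity), and is the analogue here of the exponential-type decay assumptions used in the whole-space literature.

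The key step is then a purely geometric/analytic reduction: a continuous function $z\in C_0(\R^N)$ that is symmetric-decreasing across every hyperplane in a nonempty open (in direction) family of separating hyperplanes, and in particular across two hyperplanes with distinct normals, must be radially symmetric and monotone about the point $q$ obtained as an intersection point of the family of reflection-invariant subspaces; monotonicity across enough directions forces $z$ to be \emph{signed} (it cannot change sign) and radially decreasing about $q$. Concretely, I would argue: foliated Schwarz symmetry about $p$ plus the reflection inequalities across all separating hyperplanes pin down the axis $\R p$ to be the axis of the cones, i.e. $p=\pm e_1$ up to the shift, forces the profile to depend only on $|x-q|$, and the relative position of $q$ to $\{x_1=0\}$ is inherited from which of $K^+,K^-$ "wins" asymptotically — if $q\in\{x_1>0\}$ the positive part dominates and $z>0$, if $q\in\{x_1<0\}$ then $z<0$. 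The hypothesis $0\notin\omega(u)$ is used precisely to exclude the degenerate possibility that the positive and negative parts balance out so that $q\in\{x_1=0\}$ and $z\equiv 0$; ruling that case out is what makes the dichotomy (1)–(2) clean and guarantees $q\in\R^N\setminus\{x_1=0\}$.

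I expect the main obstacle to be the rigorous passage from "reflectional monotonicity across a whole open family of hyperplanes" to "radial symmetry about a single point $q$ with the correct sign," carried out at the level of the $\omega$-limit set rather than for a single solution. The subtlety is that $\omega(u)$ need not be a singleton, so one must show that the \emph{same} center $q$ and the \emph{same} sign work simultaneously for all $z\in\omega(u)$; this requires that the set of admissible separating hyperplanes, and hence the deduced symmetry constraints, be determined by $u_0$ alone (through \eqref{u0h}) and transferred uniformly along the flow via the estimate \eqref{omega:unif} and the compactness/invariance of $\omega(u)$. A secondary technical point is ensuring the reflection comparison is valid on the \emph{unbounded} half-spaces $H^+$: here one leans on \eqref{udintro} and standard parabolic estimates (Lemma~\ref{regularity:lemma}) to justify that the difference $u-u\circ\sigma_H$ is a bounded solution of a linear parabolic equation decaying at infinity, so that the maximum principle applies without a sign assumption at $|x|\to\infty$. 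Once these uniformity and unboundedness issues are handled, the identification of $q$ and the sign dichotomy follow from elementary geometry of the cones $K^+$, $K^-$ and the constraint $0\notin\omega(u)$.
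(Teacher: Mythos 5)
Your high-level intuition — deduce axial symmetry for $u$ and its translates, then force radial symmetry via the translation invariance of the equation — is the right idea, but the proposal has a fundamental gap in its opening step and leaves the geometric reduction too vague to be correct as stated.

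\textbf{The main gap.} You propose to apply Theorem~\ref{strong:thm}, which requires $(f_2)'$. For the nonlinearity $f(t,u)=a(t)|u|^{p-1}u-b(t)u$ in~\eqref{p2}, which has no spatial dependence, $(f_2)'$ \emph{cannot} hold: the right-hand side of~\eqref{Idef} tends to $-\infty$ as $M\to\infty$ while the left-hand side $\max_{u\in[-\eps,\eps]}f_u(t,u)$ is bounded below by $-\|b\|_\infty$. The paper instead verifies $(f_2)$ and invokes Theorem~\ref{theorem:unbounded}, which delivers only a \emph{dichotomy}: either all $z\in\omega(u)$ are strictly foliated Schwarz symmetric w.r.t.\ a common axis, or some $z\in\omega(u)$ is radially symmetric about the origin. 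Your proposal does not address the second branch. The paper disposes of it by a separate argument (``Proof of (a)''): combining radiality about the origin with the reflection inequalities $z\geq z\circ\sigma_\lambda$ over all shifted hyperplanes $\{x_1=\lambda\}$, $\lambda\in(-1,1)$, one iterates the reflections along a ray and contradicts the uniform decay~\eqref{udintro} unless $z\equiv 0$, which is excluded by $0\notin\omega(u)$. Without this step, the argument is incomplete.

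\textbf{The geometric reduction.} Your claim that ``reflection-decreasing across an open family of separating hyperplanes, and in particular across two hyperplanes with distinct normals,'' forces radial symmetry is not correct as stated and is not what the paper uses. Mere reflection inequalities across a cone of directions do not characterize radial functions. What the paper actually exploits is much stronger: the \emph{strict} foliated Schwarz symmetry (guaranteed by the first branch of the dichotomy) applied to $u$ and to the translates $u(\cdot+\alpha e_i)$ for $i\neq 1$. Each translate satisfies $(U_0)'$ again, so each has a symmetry axis $P_{i,\alpha}$, and because each $z\in\omega(u)$ is strictly monotone in the polar angle about \emph{each} of these axes with uniform decay, the global maximum of $z$ must lie in every $P_{i,\alpha}$. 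Taking $\alpha$ so that two of these axes meet at an irrational angle pins the common center $q$ and forces radial symmetry of $z$ about $q$. This is a different and more delicate geometric argument than the one you sketch, and it is the one that actually closes the gap. Finally, the sign dichotomy in the statement is obtained by one more iteration of the reflection argument along $q+\R e_1$, again using the uniform decay; your proposal only gestures at this with ``monotonicity across enough directions forces $z$ to be signed,'' which is not a proof.
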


Note that, in this setting, there is an additional complication if $0$ belongs 
to the $\omega$-limit set.  If $0\in\omega(u)$, then for a sequence of times $t_n\to\infty$ the solution tends uniformly to zero, and therefore the (symmetrizing) influence from the initial condition weakens and the asymptotic symmetry of solutions becomes unclear. This is why Theorem \ref{thm2} poses an alternative: either $0\in\omega(u)$ or the solution $u$ is asymptotically signed and radially symmetric.  Theorem \ref{thm2} is a consequence of the more general Theorem \ref{theorem:unbounded} below together with an argument involving the translational invariance of the equation. 
\begin{figure}[!h]\label{fig11}
\begin{center}
 \includegraphics[width=.50\textwidth]{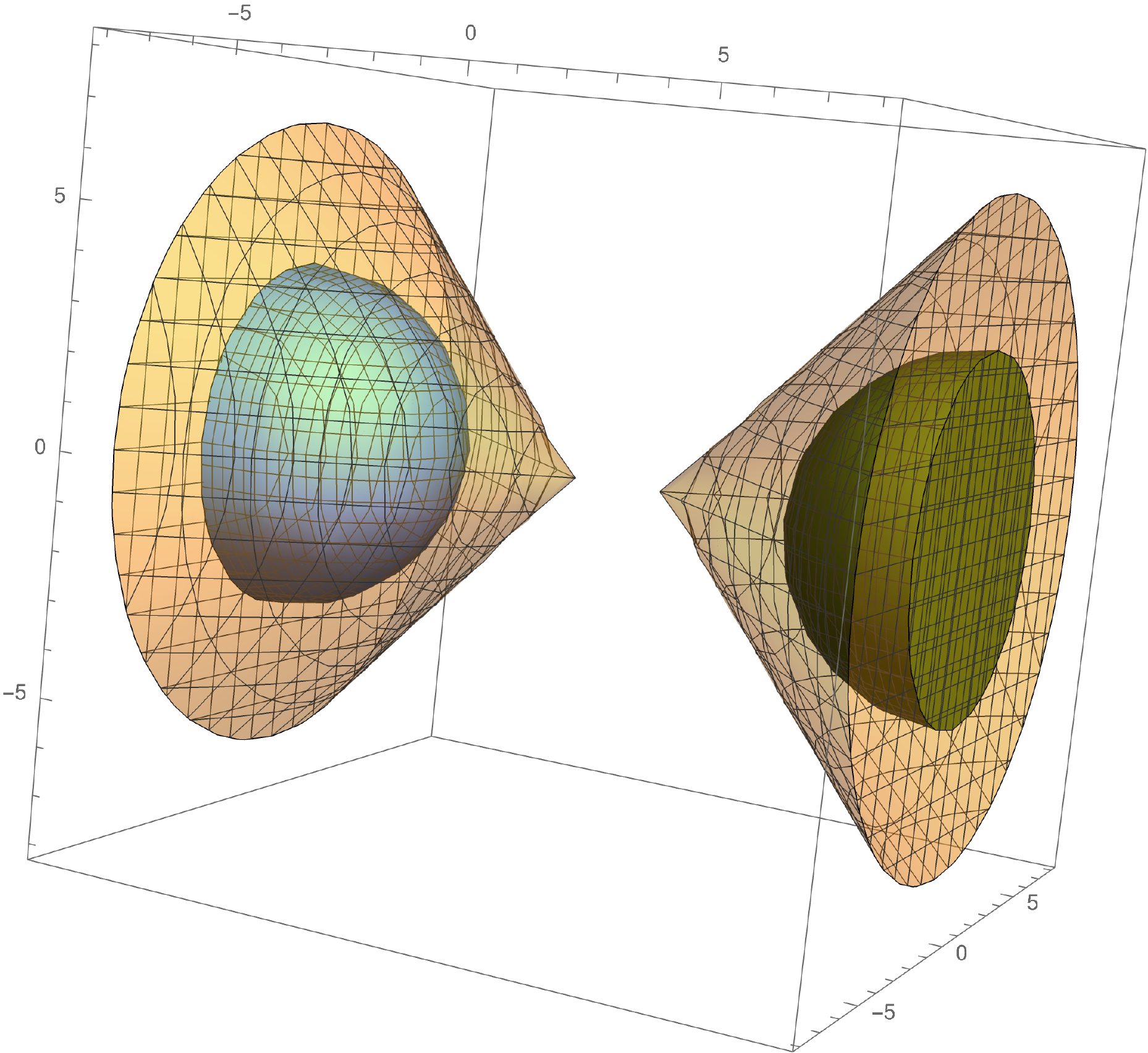}
 \end{center}
\caption{Geometry of the condition \eqref{u0h} in $\R^3$, where the support of the positive part of $u_0$ (represented by the dark region on the right) lies inside a cone in the halfspace $\{x_1>1\}$ and the support of the negative part of $u_0^-$ (represented by the dark region on the left) is inside a cone in the halfspace $\{x_1<-1\}$.}\label{hfig}
\end{figure}

\subsection*{Main results}

In the following, we give a more abstract framework to state general results from which Theorems \ref{thm1} and \ref{thm2} can be deduced. Our results are complementary to the papers \cite{polacik,polacik:unbounded}, where asymptotic radial symmetry of positive solutions is proved in bounded and unbounded domains.  More specifically, the conclusions in \cite{polacik, polacik:unbounded} are stronger (radial versus foliated Schwarz symmetry) and there are no geometric assumptions on the initial condition; however, 
\cite{polacik, polacik:unbounded} require positivity of solutions and monotone dependence of $f$ on $|x|$ (we remark that the theorems in \cite{polacik:unbounded} do not consider spatial dependences on $f$, but the proof can be adjusted to include nonincreasing radial spatial dependences).  The main tool in \cite{polacik, polacik:unbounded} is the \emph{parabolic moving plane method} (MPM), which is a perturbation technique strongly based on maximum principles, Harnack inequalities, parabolic regularity, and the construction of suitable subsolutions.  The lack of compactness in unbounded domains is the main difficulty when trying to characterize the symmetry of solutions of~\eqref{model}, and in fact, symmetry-breaking phenomena in unbounded domains are known, see for example \cite{PY04}. For other results regarding reflectional symmetry and monotonicity in bounded domains via MPM, see  \cite{f1,f2,f3,f4,f5,f6,fp}.

In this paper we consider sign-changing solutions, spatially non-monotone nonlinearities, and exterior domains.  In this setting, a standard MPM cannot be used; instead, we use a variant sometimes called \emph{rotating plane method} (RPM).  We explain the differences between these two techniques below.  For similar results in bounded domains for parabolic equations and systems, we refer to \cite{saldana:2016,saldana:2015,saldana-weth,torino}, and for elliptic problems to \cite{GPW10,wethsurvey}.

We introduce some notation to formulate our results. Recall that $\Sigma$ can be the whole space without a ball of radius $R>0$, $\Sigma=\R^N\backslash B_R(0)$, or the whole space $\Sigma=\R^N$, in which case we set $R=0$. For a vector $e\in \S$, we consider the hyperplane $H(e):=\{x\in \mathbb R^N: x\cdot e=0\}$ and the half domain $\Sigma(e):=\{x\in \Sigma: x\cdot e>0\}.$  We also write $\sigma_e:\Sigma\to \Sigma$ to denote reflection with respect to $H(e),$ i.e. $\sigma_e(x):=x-2(x\cdot e)e$ for each $x\in \Sigma.$

We consider a classical global solution $u\in C^{2,1}(\Sigma\times(0,\infty))\bigcap C(\overline{\Sigma}\times[0,\infty))$ of~\eqref{model} satisfying the following assumptions.
\begin{itemize}
 \item[$(U_0)$] \emph{(Initial reflectional inequality)} The initial profile $u_0$ belongs to $C_0(\overline{\Sigma})$ and there exists $e\in \S$ such that 
 \begin{align*}
\text{ $u_0(x)\geq u_0(x^e)$ for all $x\in \Sigma(e)$ and $u_0\not\equiv u_0\circ \sigma_e.$  }
 \end{align*}
\item [$(U_1)$] \emph{(Uniform decay)} $\lim\limits_{|x|\to\infty} \sup\limits_{t>0} u(x,t)=0.$
\item [$(U_2)$] \emph{(Uniform boundedness)} $||u||_{L^\infty(\Sigma\times(0,\infty))}=:M_1<\infty$.
\end{itemize}

 Define $I:=[R,\infty)=\overline{\{|x|:x\in\Sigma\}}.$  We now state our first assumption on $f$.
\begin{itemize}
 \item [$(f_0)$] \emph{(Boundedness at zero)} For every $r>0$ there is $K_r>0$ such that
 \begin{align*}
\sup\limits_{s\in [R,r], t>0}|f(t,s,0)|<K_r.
 \end{align*}
\end{itemize}
Assumption $(f_0)$ is used to obtain the equicontinuity of $u$, see Lemma~\ref{regularity:lemma}.

Next, we impose some standard regularity on $f$ and a crucial sign assumption on $f_u$ outside a compact set. This last assumption has two important variants (see $(f_2)'$ and $(f_2)$ below), and we divide our results accordingly. 

\subsubsection*{Strong stability outside compact sets}

Let $\lambda_1$ denote the principal eigenvalue of the Dirichlet Laplacian on the unit ball $B_1(0)\subset \R^N$. Assume that
\begin{itemize}
  \item [$(f_1)'$] \emph{(Regularity)} the nonlinearity $f: [0,\infty) \times I \times \R \to \R$, $(t,r,u) \mapsto f(t,r,u)$ is continuous in $t,r,$ and locally uniformly Lipschitz in $u,$ that is, for every bounded interval $J\subset I$ and $K>0$ there is $C=C(K,J)>0$ such that
\begin{align*}
\sup_{r\in J,t>0,u,v\in[-K,K]}|f(t,r,u)-f(t,r,v)| \leq C|u-v|.
\end{align*}
 In particular, $f_u(t,r,u)$ exists for almost every $u$ and it is locally bounded.
 \item [$(f_2)'$] \emph{(Strong stability)}  given $M>0$ there are $\rho>R$, $\eps>0$, and an interval $J\subset (R,\rho)$ such that
\begin{align}\label{Idef}
 &\max_{r>\rho,u\in[-\eps,\eps]}f_u(t,r,u)<-\max_{
 \substack{
 {r\in J,}
 \\
 {u\in[-M,M]}
 }}
 |f_u(t,r,u)|- 4\frac{\lambda_1}{|J|^2}\qquad \text{for all $t>0$}.
\end{align}
\end{itemize}

If the bound $M_1$ from $(U_2)$ is known, then it suffices to suppose that $(f_2)'$ is satisfied for $M=M_1$. Observe that the assumption $(f_2)'$ can only be satisfied by nonlinearities with a spatial dependence.  This is one of the novelties in Theorem \ref{strong:thm} below: we show that a condition on the spatial dependence of the nonlinearity (given by \eqref{Idef}) yields a symmetrizing effect on the solution for large times.

\medskip

Our main result in this setting is the following.

\begin{theo}\label{strong:thm}
 Let $u\in C^{2,1}(\Sigma\times(0,\infty))\bigcap C(\overline{\Sigma}\times[0,\infty))$ be a classical global solution of~\eqref{model} satisfying $(U_0)$-$(U_2)$ and assume that $(f_0)$, $(f_1)'$, and $(f_2)'$ hold.  Then $u$ is asymptotically foliated Schwarz symmetric, that is, there is $p\in \S$ such that $z$ is foliated Schwarz symmetric with respect to $p$ for all $z\in\omega(u)$.
\end{theo}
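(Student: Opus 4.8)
The plan is to adapt the rotating plane method (RPM) to this unbounded setting. The first step is to set up, for each direction $e\in\S$, the difference function $w_e(x,t):=u(x,t)-u(x^e,t)$ on the half-domain $\Sigma(e)\times(0,\infty)$, which solves a linear parabolic equation $\partial_t w_e-\Delta w_e = c_e(x,t)\,w_e$ with $c_e(x,t)=\int_0^1 f_u(t,|x|,su(x,t)+(1-s)u(x^e,t))\,ds$ (using $(f_1)'$), subject to the boundary condition $w_e=0$ on $H(e)\cap\Sigma$, and $w_e=0$ on $\partial\Sigma$ (here I am using that $\Sigma$ and its boundary are invariant under $\sigma_e$ because $0\in\Sigma$ or the ball is centered at the origin). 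Starting from $(U_0)$, which gives one direction $e_0$ with $w_{e_0}(\cdot,0)\ge 0$ and $\not\equiv 0$, the core of the argument is a parabolic maximum principle on the unbounded strip $\Sigma(e)$: I would show that $(f_2)'$ together with $(U_1)$, $(U_2)$ forces $w_e\ge 0$ on $\Sigma(e)\times(0,\infty)$ to be propagated, and in fact $w_e>0$ in the interior (by the strong maximum principle and Hopf lemma) once it is nonnegative and nontrivial at some time.

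The heart of the matter — and the step I expect to be the main obstacle — is the maximum principle on the unbounded half-space $\Sigma(e)$, where $c_e$ is only bounded, not negative. This is exactly where $(f_2)'$ enters: for $|x|>\rho$ and $u$ small (which holds for $|x|$ large, uniformly in $t$, by $(U_1)$) the coefficient $c_e$ is strongly negative, while on the compact annulus $\{|x|\le\rho\}$ one controls a possible negative dip of $w_e$ using the interval $J\subset(R,\rho)$: the term $4\lambda_1/|J|^2$ in \eqref{Idef} is precisely the threshold that lets one build a positive supersolution (a shifted/scaled principal eigenfunction of the Dirichlet Laplacian on a slab of width $|J|$, times an exponential weight) ruling out a negative interior minimum of $w_e$ escaping to spatial infinity. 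Concretely I would argue by contradiction: if $\inf w_e<0$ along some sequence $(x_n,t_n)$, then either $|x_n|$ stays bounded — contradiction via a compactness/Harnack argument together with the comparison function on $J$ — or $|x_n|\to\infty$, which is excluded by the strong negativity of $c_e$ there and the decay from $(U_1)$. One also needs continuity of $w_e(\cdot,0)\mapsto w_e(\cdot,t)$ under perturbations of $e$, for which the uniform parabolic estimates of Lemma~\ref{regularity:lemma} and $(f_0)$ are used.

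With the maximum principle in hand, I would run the rotation argument. Let $E:=\{e\in\S : w_e\ge 0 \text{ on } \Sigma(e)\times(0,\infty)\}$. By $(U_0)$ and the propagation step, $e_0\in E$, so $E\ne\emptyset$; $E$ is closed by the continuity of $e\mapsto w_e$; and one shows $E$ is "rotationally convex" — if $e_1,e_2\in E$ then every $e$ on the shorter geodesic between them lies in $E$ — using the strong maximum principle (if $w_e$ vanishes somewhere in the interior for an intermediate $e$, then $w_e\equiv 0$, i.e.\ $u$ is symmetric in $e$, and a standard continuity/openness argument pushes symmetry to a neighborhood, eventually to all of $\S$, giving radial symmetry which is a degenerate case of foliated Schwarz symmetry). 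Passing to the limit $t=t_n\to\infty$ along any $z\in\omega(u)$, the relations $w_e\ge0$ for all $e\in E$ become: $z(x)\ge z(x^e)$ on $\Sigma(e)$ for all $e\in E$. A now-classical lemma (see e.g.\ \cite{GPW10,wethsurvey}) states that a continuous function satisfying such reflection inequalities for all $e$ in a set $E$ that is either a closed half-sphere (with pole $p$) or all of $\S$ is foliated Schwarz symmetric with respect to $p$ (or radial). The final step is therefore to show $E$ is exactly such a set: $E$ is a closed, rotationally convex, proper subset of $\S$ (properness because $u_0\not\equiv u_0\circ\sigma_{e_0}$ forbids $-e_0\in E$ unless $u$ becomes symmetric, handled separately), and a geometric argument (as in \cite{saldana-weth,GPW10}) shows any such set is contained in a closed half-sphere; choosing $p$ to be the center of the smallest such half-sphere yields the asymptotic foliated Schwarz symmetry of every $z\in\omega(u)$ with respect to this common $p$.

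\medskip

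\noindent\textit{Remark on the obstacle.} Compared to the bounded-domain RPM of \cite{saldana-weth,torino}, the only genuinely new difficulty is the non-compactness: the usual argument produces a negative minimum of $w_e$ at an interior point, but here that point may run off to infinity. Assumption $(f_2)'$ is tailored to kill exactly this scenario, and verifying that the eigenfunction comparison on the slab of width $|J|$ interacts correctly with the strong-negativity region $\{|x|>\rho\}$ — in particular that the constant $4\lambda_1/|J|^2$ (rather than $\lambda_1/|J|^2$) is what is needed, because of the factor-$2$ loss from reflecting and the need for a strict supersolution — is the technical crux of the proof.
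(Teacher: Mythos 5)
Your outline correctly identifies the three ingredients (start via a maximum principle, perturbation via a Harnack inequality and a subsolution, geometric characterization of foliated Schwarz symmetry), and you locate the right threshold $4\lambda_1/|J|^2$ and the right type of subsolution: the paper compares with a rescaled principal eigenfunction on a ball $B$ of radius $|J|/2$ inside the slab $\{|x|\in J\}$ (Lemma~\ref{subsolb}), whence the factor $4$ — it comes from scaling the radius, not from ``reflecting''. However, the rotation argument you then propose has a genuine structural gap.

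You set $E:=\{e\in\S : w_e\ge 0 \text{ on } \Sigma(e)\times(0,\infty)\}$ and claim it is closed and ``rotationally convex''. This object is too rigid. $(U_0)$ provides exactly one direction $e_0$ with $w_{e_0}(\cdot,0)\ge 0$; for $e'\ne e_0$ the initial reflection inequality can fail, $w_{e'}$ can be negative for a range of times, and $e'\notin E$ even if $e'$ is arbitrarily close to $e_0$. So $E$ may reduce to the single point $\{e_0\}$, and there is nothing for closedness or convexity to act on. The correct object (and what the paper works with) is the asymptotic set $\cM:=\{e\in\S : z_e\ge 0 \text{ in } \Sigma(e) \text{ for all } z\in\omega(u)\}$, where the reflection inequality is only required in the $\omega$-limit.

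Your argument for ``rotational convexity'' is also circular. You write: ``using the strong maximum principle (if $w_e$ vanishes somewhere in the interior for an intermediate $e$, then $w_e\equiv 0$ \ldots)''. But the strong maximum principle yields this dichotomy only once one already knows $w_e\ge 0$ — which, for intermediate $e$, is precisely what must be proved. The essential content of the rotating plane method is a quantitative perturbation step, not a soft continuity/convexity argument: if $e\in\cM$ and some $z\in\omega(u)$ has $z_e\not\equiv 0$, then an entire arc of directions around $e$ (in each 2D plane through $e$) belongs to $\cM$. Establishing this requires (i) the Harnack inequality (Lemma~\ref{harnack:inequality}) to turn $z_e\not\equiv 0$ into a uniform positive lower bound for $w_e$ on the compact region $G$ at large times; (ii) the comparison with the eigenfunction-based subsolution and the weak maximum principle on $\Sigma\setminus G$ to show this lower bound cannot be destroyed by the possible negative tail of $w_{e'}$, since the negative part decays at the controlled rate $e^{-\gamma t}$ while the positive part is kept above the same rate (Lemma~\ref{subsolb}, Proposition~\ref{polaciku}); and (iii) parabolic regularity (Lemma~\ref{regularity:lemma}) and $(U_1)$ to make the whole construction stable under small changes of $e$ (Lemma~\ref{lobolema}). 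None of this is supplied by ``continuity of $e\mapsto w_e$'' alone.

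Finally, the geometric step you invoke — that a closed rotationally convex proper subset of $\S$ sits in a closed half-sphere, and then a half-sphere of reflection inequalities forces foliated Schwarz symmetry — is not what the paper uses and is more than what the RPM delivers. After the perturbation step the output is, for each 2D plane $P$ through $e_0$, an open arc in $\cM\cap P$ whose two endpoints $p_1\ne p_2$ are symmetry hyperplanes for every $z\in\omega(u)$; this arc need not be a half-circle. The right characterization is the two-dimensional one in Proposition~\ref{sec:char-foli-schw-1}, which takes exactly this configuration as input. A proof built on ``$\cM$ contains a closed half-sphere'' would require additional work that is neither done in the paper nor sketched in your proposal.
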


Note that the axis of symmetry $\R p$ is fixed for all elements in $\omega(u)$.  A typical situation where $(f_2)'$ is satisfied is the case of a bounded nonlinearity plus a suitable potential. The next corollary shows an example.

\begin{coro}\label{coro:sim}
Let $f$ satisfy $(f_1)'$ and be such that
\begin{align*}
&\sup_{r\geq R,t>0,s\in \R} |f_u(t,r,s)|=:C_0 <\infty.
\end{align*}
Let $u\in C^{2,1}(\Sigma\times(0,\infty))\bigcap C(\Sigma\times[0,\infty))$ be a classical solution satisfying $(U_0)$--$(U_2)$ of
\begin{equation}\label{particularcase}
\begin{aligned}
 u_t-\Delta u+V(|x|)u&= f(t,|x|,u) &&\text{ in }\Sigma \times (0,\infty),\\
       \ \ u(x,0) &= 0 &&\text{ on }\partial\Sigma\times(0,\infty),\\
       \ \ u(x,0) &= u_0(x) &&\text{ for }x\in \Sigma,
\end{aligned}
\end{equation}
where $V:[0,\infty)\to \R$ is a continuous function such that 
\begin{align}\label{V}
&\min_{(\rho,\infty)}V>\max_{{\mathcal J}}|V| + 4\frac{\lambda_1}{|{\mathcal J}|^2}+2C_{0},
\end{align}
for some $\rho>R$ and some interval ${\mathcal J}\subset [R,\rho)$. Then $u$ is asymptotically foliated Schwarz symmetric.
\end{coro}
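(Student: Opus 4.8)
The plan is to absorb the linear term $V(|x|)u$ into the nonlinearity and then invoke Theorem~\ref{strong:thm}. I would set
\[
\tilde f(t,r,u):=f(t,r,u)-V(r)\,u,\qquad (t,r,u)\in[0,\infty)\times I\times\R ,
\]
so that $u$ becomes a classical global solution of~\eqref{model} with $f$ replaced by $\tilde f$, while $u$ and $u_0$ still satisfy $(U_0)$–$(U_2)$ (these concern only $u$ and $u_0$). It then suffices to verify that $\tilde f$ fulfils $(f_0)$, $(f_1)'$, and $(f_2)'$. The first two are immediate: since $V$ is continuous, $\tilde f$ is continuous in $(t,r)$, and because $\tilde f(\cdot,\cdot,0)=f(\cdot,\cdot,0)$, assumption $(f_0)$ for $\tilde f$ is $(f_0)$ for $f$; moreover, for every bounded interval $J\subset I$ and every $K>0$,
\[
\sup_{r\in J,\;t>0,\;u,v\in[-K,K]}\big|\tilde f(t,r,u)-\tilde f(t,r,v)\big|\le\Big(C_0+\sup_{J}|V|\Big)\,|u-v| ,
\]
so $(f_1)'$ holds with $C=C_0+\sup_{J}|V|<\infty$, and in particular $\tilde f_u=f_u-V$ exists for a.e.\ $u$ and is locally bounded.

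The substantive step is $(f_2)'$. Given $M>0$ (one may take $M=M_1$), I would pick $\rho>R$ and the interval ${\mathcal J}\subset[R,\rho)$ furnished by~\eqref{V}. Since~\eqref{V} is a strict inequality whose right-hand side depends continuously on the interval, ${\mathcal J}$ may be replaced by a compact subinterval $J\subset(R,\rho)$, bounded away from $R$, still satisfying
\[
\min_{(\rho,\infty)}V>\max_{J}|V|+4\frac{\lambda_1}{|J|^2}+2C_0 .
\]
Then, for every $t>0$ and every $\eps>0$, using $f_u\le C_0$ for the first estimate, the previous display for the second, and $|f_u|\le C_0$ for the third,
\begin{align*}
\sup_{r>\rho,\;u\in[-\eps,\eps]}\tilde f_u(t,r,u)
&\le C_0-\min_{(\rho,\infty)}V\\
&<-\Big(C_0+\max_{J}|V|\Big)-4\frac{\lambda_1}{|J|^2}\\
&\le-\max_{r\in J,\;u\in[-M,M]}\big|\tilde f_u(t,r,u)\big|-4\frac{\lambda_1}{|J|^2} ,
\end{align*}
which is exactly~\eqref{Idef} for $\tilde f$ with this choice of $\rho$, this $J$, and any fixed $\eps>0$. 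Since the bound is uniform in $t>0$, assumption $(f_2)'$ holds for $\tilde f$, and Theorem~\ref{strong:thm} then gives that $u$ is asymptotically foliated Schwarz symmetric.

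The argument is essentially a change of viewpoint, so I do not expect a genuine obstacle; the only point needing a little care is matching the half-open interval ${\mathcal J}\subset[R,\rho)$ allowed in~\eqref{V} with the requirement $J\subset(R,\rho)$ inside $(f_2)'$, which the small shrinking above handles harmlessly by strictness and continuity. It is also worth noting that the coefficient $2C_0$ in~\eqref{V} is precisely what the reduction demands: $f_u$ enters once through the upper bound $C_0-\min_{(\rho,\infty)}V$ on the left of~\eqref{Idef} and once more, via $|f_u|\le C_0$, through the lower bound $-(C_0+\max_J|V|)-4\lambda_1/|J|^2$ on its right.
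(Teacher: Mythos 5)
Your reduction $\tilde f := f - Vu$ followed by verification of $(f_0)$, $(f_1)'$, $(f_2)'$ and appeal to Theorem~\ref{strong:thm} is exactly the intended argument, and the bookkeeping correctly shows how the constant $2C_0$ in \eqref{V} arises from two uses of $|f_u|\le C_0$ (once bounding $\tilde f_u$ above for $r>\rho$, once bounding $|\tilde f_u|$ above on $J$), while the shrinking of ${\mathcal J}\subset[R,\rho)$ to $J\subset(R,\rho)$ is a legitimate use of the strictness of \eqref{V}. The only loose end, which traces to the corollary's statement rather than to your proof, is that $(f_0)$ for $f$ (a uniform-in-time bound on $f(t,r,0)$) does not follow from $(f_1)'$ and $C_0<\infty$ alone, so it must be read as a standing hypothesis for the corollary as well.
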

Observe that~\eqref{V} is always satisfied if $V(r)\to\infty$ as $r\to\infty$, and there is no restriction on the behavior of $V$ in $[0,\rho)\backslash I$. We also remark that $0<\lambda_1\leq \frac{N}{2}\left( \sqrt{\frac{N}{2} +1}  + 1 \right)^2,$ (see \cite{ch82}) where $N$ is the dimension.

The proof of Theorem~\ref{strong:thm} is based on an RPM which extends the ideas from the bounded domain case \cite{saldana-weth,polacik} to unbounded domains. Similarly as the MPM, the RPM can be divided in three parts: the start, the perturbation step, and the characterization of symmetry. The start relies strongly on assumption $(U_0)$, maximum principles, and a suitable linearization of~\eqref{model} (see Lemma~\ref{T:l}).  The perturbation step is based on maximum principles (Lemma~\ref{weakmp}), Harnack inequalities (Lemma~\ref{harnack:inequality}), parabolic regularity (Lemma~\ref{regularity:lemma}), and the construction of suitable subsolutions (Lemmas~\ref{subsolb} and~\ref{lemma5}). A crucial aspect in this part is the fine use of constants which have precise dependences in order to construct and control a suitable perturbation. This is particularly delicate when considering unbounded domains and nonlinearities with a spatial dependence.  Here hypothesis $(U_1)$, $(U_2)$, and $(f_2)'$ allow to control the solution outside large compact sets, and this is essential to compensate for the lack of compactness in unbounded domains. The last part, the characterization of symmetry, relies on a geometric characterization of foliated Schwarz symmetry in terms of reflectional inequalities (Lemma~\ref{sec:char-foli-schw-1}), such characterizations were first presented in \cite{brock} in the study of elliptic variational problems.

\subsubsection*{Weak stability outside compact sets}

Next we use a set of assumptions which is closer to that of \cite{polacik:unbounded}. Assume that
\begin{itemize}
\item [$(f_1)$] (Regularity) $f: [0,\infty) \times I \times \R \to \R$, $(t,r,u) \mapsto f(t,r,u)$ is continuous in $t,r,$ continuously differentiable in $u$, and satisfies that
\begin{align*}
&\lim_{u\to v}\sup_{r\in I,t>0}|f_u(t,r,u)-f_u(t,r,v)|=0 &&\text{for every $v\in\mathbb R,$}\\
&\sup_{r\in J,t>0,s\in[-K,K]}|f_u(t,r,s)|<\infty &&\text{ for every $K>0$ and $J\subset\subset I$}.
\end{align*}
\item [$(f_2)$] \emph{(Stability)} there are a constants $\rho,\gamma,\varepsilon>0$ such that 
\begin{align*}
 f_u(t,|x|,s)<-\gamma\qquad \text{for all $|x|>\rho$, $t\geq 0$, and $s\in(-\varepsilon,\varepsilon)$.}
\end{align*}
\end{itemize}

Here $J\subset\subset I$ means that $J$ is \emph{compactly contained} in $I$, that is, $\overline{J}$ is compact and a subset of $I$.  The difference between $(f_2)'$ and $(f_2)$ is that $\gamma>0$ in $(f_2)$ can be arbitrarily small.  This allows to consider, for example, a problem as in~\eqref{particularcase}, where $V$ is possibly unbounded from below but $V(r_n)=-\gamma$ for a sequence $r_n\to\infty$ and some $\gamma\in(0,1)$ small.  This weaker assumption $(f_2)$ implies, however, a weaker control on the solution at spatial infinity and forces the RPM to use a different subsolution given by Lemma~\ref{lemma5}, which in turn requires some knowledge on the elements in $\omega(u)$. As a consequence, our main result under the weaker assumption $(f_2)$ describes an alternative. 

\begin{theo}\label{unbounded}
Assume $(f_0)$, $(f_1)$, $(f_2)$, and let $u\in C^{2,1}(\Sigma\times(0,\infty))\bigcap C(\overline{\Sigma}\times[0,\infty))$ be a classical global solution of~\eqref{model} satisfying $(U_0)$, $(U_1)$, and $(U_2)$.  Then one of the following alternatives happen:
\begin{enumerate}
 \item $u$ is asymptotically foliated Schwarz symmetric with respect to some $p\in \S$, that is,  all elements in $\omega(u)$ are foliated Schwarz symmetric with respect to $p$.  Moreover, all elements in $\omega(u)$ are strictly decreasing in the polar angle.
 \item There exists $z\in\omega(u)$ such that $z\equiv z\circ \sigma_e$ with $e$ is in $(U_0)$.
\end{enumerate}
\end{theo}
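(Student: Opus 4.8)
The plan is to run the rotating plane method starting from the distinguished direction $e$ of $(U_0)$, organizing everything around the dichotomy in the statement. For a unit vector $e'\in\S$ I set $w_{e'}:=u-u\circ\sigma_{e'}$ on $\Sigma(e')\times(0,\infty)$; since $f$ depends on $x$ only through $|x|$, the mean value theorem gives $\partial_t w_{e'}-\Delta w_{e'}=c_{e'}w_{e'}$ with $c_{e'}$ bounded (by $(f_1)$ and $(U_2)$), while $w_{e'}$ vanishes on $H(e')\cap\Sigma$ and on $\partial\Sigma\cap\partial\Sigma(e')$. By $(U_1)$ there is $\rho'\geq\rho$ with $|u(x,t)|<\varepsilon$ for $|x|\geq\rho'$ and all $t>0$, so $(f_2)$ yields $c_{e'}(x,t)\leq-\gamma<0$ for $|x|>\rho'$ — precisely the structure needed to apply the weak maximum principle on the unbounded half-domain $\Sigma(e')$ (Lemma~\ref{weakmp}), using also that $w_{e'}$ vanishes at spatial infinity by $(U_1)$. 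The \emph{start} then follows the usual scheme: from $(U_0)$ and the linearization of~\eqref{model} near $t=0$ (Lemma~\ref{T:l}) together with Lemma~\ref{weakmp} one gets $w_e(\cdot,t)\geq0$ on $\Sigma(e)$ for all $t>0$, and the Harnack inequality (Lemma~\ref{harnack:inequality}) together with $u_0\not\equiv u_0\circ\sigma_e$ and uniqueness for the linear Cauchy problem upgrades this to $w_e(\cdot,t)>0$ in $\operatorname{int}\Sigma(e)$ for every $t>0$; letting $t_n\to\infty$ gives $z\geq z\circ\sigma_e$ on $\Sigma(e)$ for every $z\in\omega(u)$.

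Next I would dispose of alternative (2): if some $z\in\omega(u)$ satisfies $z\equiv z\circ\sigma_e$ there is nothing left to prove, so from now on I assume no element of $\omega(u)$ is $\sigma_e$-symmetric and aim at alternative (1). The first step is to strengthen the monotonicity of the limit profiles to strict monotonicity in the direction $e$. Given $z=\lim_n u(\cdot,t_n)\in\omega(u)$, the interior parabolic estimates of Lemma~\ref{regularity:lemma} make the translates $u(\cdot,\cdot+t_n)$ precompact in $C^{2,1}_{\mathrm{loc}}(\Sigma\times\R)$, so along a subsequence $u(\cdot,\cdot+t_n)\to\tilde u$ in $C^{2,1}_{\mathrm{loc}}$ with $\tilde u$ bounded and $\tilde u(\cdot,0)=z$. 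Passing to the limit in the equations for $w_e(\cdot,\cdot+t_n)$ shows that $\tilde w_e:=\tilde u-\tilde u\circ\sigma_e\geq0$ is a nonnegative entire solution of a linear parabolic equation with bounded coefficients on $\Sigma(e)\times\R$; since $z\not\equiv z\circ\sigma_e$, $\tilde w_e$ is not identically zero, so by unique continuation and the strong maximum principle $\tilde w_e>0$ in $\operatorname{int}\Sigma(e)\times\R$, and Hopf's lemma gives a positive inner normal derivative on $H(e)$. In particular every $z\in\omega(u)$ is strictly ordered in the direction $e$.

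The heart of the proof — and the step I expect to be the main obstacle — is the perturbation (rotation) step in the unbounded domain under the weak stability $(f_2)$. Here the subsolution of Lemma~\ref{subsolb}, tailored to the strong hypothesis $(f_2)'$, is unavailable, and one must instead use the subsolution furnished by Lemma~\ref{lemma5}, whose construction feeds precisely on the $\omega$-limit information just obtained: $\{z:z\in\omega(u)\}$ is compact in $C_0(\overline\Sigma)$, the functions $w_{e'}$ attached to limit profiles are uniformly positive on compact subsets of $\operatorname{int}\Sigma(e')$ while decaying at spatial infinity by $(U_1)$, and $c_{e'}\leq-\gamma$ outside $B_{\rho'}$. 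With these ingredients, for a direction $\tilde e$ close to an admissible $e'$ one bounds $w_{\tilde e}(\cdot,t)$ from below for large $t$ by such a subsolution: rotating the plane perturbs $w_{\tilde e}$ only by a small, controlled amount near $H(\tilde e)$, which is dominated on a fixed compact set by the strict positivity of the limit profiles and controlled at infinity by the decay of the subsolution. Keeping track of the precise dependence of all constants — so that this smallness genuinely beats the subsolution uniformly in $t$ — is the delicate point, exactly as the authors flag in the outline.

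Finally I would package the perturbation step topologically. Put
\[
\mathcal P:=\{e'\in\S:\ z\geq z\circ\sigma_{e'}\text{ on }\Sigma(e')\text{ for all }z\in\omega(u)\}.
\]
By the start, $e\in\mathcal P$; $\mathcal P$ is closed by passing to limits; and the perturbation step, combined with the strict-ordering upgrade of the second step (applied to each new direction, noting that any boundary point of $\mathcal P$ must be a direction of reflectional symmetry for some $z\in\omega(u)$), shows that $e$ — and in fact every direction in which all limit profiles are strictly ordered — lies in the interior of $\mathcal P$, where moreover all $z\in\omega(u)$ are strictly ordered. Since no $z\in\omega(u)$ is $\sigma_e$-symmetric, $\mathcal P\neq\S$ (equality would force every $z$ to be radial about the origin, hence $\sigma_e$-symmetric). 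By the geometric characterization of foliated Schwarz symmetry via reflectional inequalities (Lemma~\ref{sec:char-foli-schw-1}), a nonempty proper closed subset of $\S$ with these properties is a closed hemisphere $\{e'\cdot p\geq0\}$ for a unique $p\in\S$, every $z\in\omega(u)$ is foliated Schwarz symmetric with respect to $p$, and strict ordering on the open hemisphere $\{e'\cdot p>0\}$ is exactly strict monotonicity in the polar angle. This yields alternative (1) and completes the plan.
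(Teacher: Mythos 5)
Your proposal identifies the right ingredients (the start via $(U_0)$, weak maximum principle, and Harnack; the compactness of $\omega(u)$; Lemma~\ref{lemma5}; Proposition~\ref{sec:char-foli-schw-1}), but there is a genuine gap in the step that actually closes the argument. You correctly note that once $z_{e'}>0$ on $\Sigma_1$ for every $z\in\omega(u)$, nearby directions $e''$ also satisfy $z_{e''}\geq0$; but this perturbation step (Lemma~\ref{lemma4} in the paper) only uses Lemma~\ref{regularity:lemma}, equicontinuity, and the weak maximum principle Lemma~\ref{weakmp} --- it does \emph{not} need the subsolution of Lemma~\ref{lemma5}. You misattribute Lemma~\ref{lemma5} to this step. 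The subsolution is what drives the real crux of the proof, which is missing from your outline: fixing a two-dimensional plane $P\ni e$ and writing $\Theta_1<0<\Theta_2$ for the endpoints of the maximal arc of angles $\eta$ with $z_{e_\eta}>0$ for all $z\in\omega(u)$, one must show $\Theta_2-\Theta_1=\pi$. The paper does this by contradiction: if the arc is shorter, then at (say) $\Theta_1$ there is \emph{some} $\bar z\in\omega(u)$ with $\bar z_{\Theta_1}\equiv0$ while another $\tilde z$ has $\tilde z_{\Theta_1}\not\equiv0$; Lemma~\ref{lemma2:cor} then forces the signs of $\tilde z_\theta$ and $\bar z_\theta$ to be opposite for $\theta$ slightly beyond $\Theta_1$, and the oscillation of $w_\theta(\cdot,t)$ between these opposite-signed limits as $t\to\infty$ is what the subsolution of Lemma~\ref{lemma5}, combined with Lemma~\ref{weakmp} and Lemma~\ref{harnack:inequality}, shows to be impossible. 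This argument is not sketched anywhere in your proposal.

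As a consequence, your final step does not close. You observe that any boundary point of $\mathcal P$ (in a plane $P$) is a symmetry direction for \emph{some} $z\in\omega(u)$, and then invoke Proposition~\ref{sec:char-foli-schw-1} as if it said ``a nonempty proper closed subset of $\S$ with these properties is a closed hemisphere.'' That is not what Proposition~\ref{sec:char-foli-schw-1} says: it requires, for every 2D plane $P\ni e$, two distinct directions $p_1\neq p_2$ in a connected component of $\cM\cap P$ such that $z\equiv z\circ\sigma_{p_1}$ and $z\equiv z\circ\sigma_{p_2}$ for \emph{every} $z\in\omega(u)$. Knowing only that some $z$ is symmetric at a boundary direction does not suffice, and nothing else you establish rules out that $\Theta_2-\Theta_1<\pi$ with different $z$'s being symmetric at the two endpoints. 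The upgrade from ``some $z$'' to ``all $z$'' is precisely what $\Theta_2-\Theta_1=\pi$ gives (since $e_{\Theta_2}=-e_{\Theta_1}$ and $z_{e}\geq 0$ together with $z_{-e}\geq 0$ forces $z_{e}\equiv 0$), so the missing contradiction argument is not a technicality you can flag as ``delicate'' and leave aside --- it is the load-bearing step.
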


The proof of Theorem~\ref{unbounded} follows an RPM and some ideas from \cite{polacik:unbounded}. We emphasize that, in general, the second alternative in Theorem~\ref{unbounded} can occur; for example, if $u_0\geq 0,$ then \cite[Theorem 1.1]{polacik:unbounded} implies that all the elements in $\omega(u)$ are radially symmetric with respect to some center.  Note that the first alternative rules out radial symmetry (with respect to the origin) because of the monotonicity properties in the polar angle.  In Section~\ref{e1}, we show an example of a solution $u$ for which $\omega(u)$ only has nodal strictly foliated Schwarz functions. This example also shows that the set of solutions satisfying the assumptions of Theorem~\ref{unbounded} is nonempty.  Furthermore, we show in Section~\ref{e2} that the symmetry in the second alternative in Theorem~\ref{unbounded} may not propagate to the other elements in $\omega(u)$; more precisely, we show in Section~\ref{e2} two examples (one in a bounded domain and one in an unbounded domain) of problems whose solution $u$ has both radially symmetric and strictly foliated Schwarz functions in $\omega(u)$; in particular, this shows that the presence of a radially symmetric function in $\omega(u)$ does not imply, in general, that all
 elements in $\omega(u)$ are radially symmetric.

Whenever the second alternative happens, this creates an obstacle to start the method, since one cannot use the subsolution in Lemma~\ref{lemma5}.  This kind of obstacles do not appear in a MPM framework (as in \cite{polacik:unbounded}), because the starting step in the method relies on the positivity of solutions, the uniform decay at spatial infinity, and on the stability of 0 given by assumption $(f_2)$.

The following are direct corollaries for elliptic and periodic parabolic equations, where the second alternative can be discarded using the maximum principle.

\begin{coro}\label{corollary:periodic}
Assume $(f_0)$--$(f_2)$,  and let  $u\in C^{2,1}(\Sigma\times(0,\infty))\bigcap C(\overline{\Sigma}\times[0,\infty))$ be a classical bounded periodic solution of~\eqref{model} that satisfies $(U_0)$ and that
$\lim\limits_{|x|\to\infty}u(x,t)=0$ for $t\geq 0$. Then there exists $p\in \S$ such that, for all $t\geq 0,$ $u(\cdot,t)$ is foliated Schwarz symmetric with respect to $p$ and $u$ is strictly decreasing in the polar angle.
\end{coro}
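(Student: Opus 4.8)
The plan is to deduce this from Theorem~\ref{unbounded} by ruling out its second alternative and then upgrading the information on $\omega(u)$ to information on the time slices $u(\cdot,t)$. First I would check that the hypotheses of Theorem~\ref{unbounded} are met: the periodicity in $t$ together with boundedness of $u$ gives $(U_2)$, the assumption $\lim_{|x|\to\infty}u(x,t)=0$ for each $t\geq 0$ combined with periodicity (so the supremum over $t$ is attained on a compact time interval, where $u$ is uniformly continuous by the parabolic estimates of Lemma~\ref{regularity:lemma}) gives $(U_1)$, and $(U_0)$ is assumed directly. Hence Theorem~\ref{unbounded} applies and it suffices to exclude the second alternative.

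Suppose for contradiction that there is $z\in\omega(u)$ with $z\equiv z\circ\sigma_e$, where $e$ is the vector in $(U_0)$. Here I would use the periodicity crucially: by the periodic structure, $\omega(u)$ coincides (up to the natural identification) with the orbit of the periodic solution, so there is $t_0\geq 0$ with $u(\cdot,t_0)=z$, hence $u(\cdot,t_0)\equiv u(\cdot,t_0)\circ\sigma_e$. Now consider $w(x,t):=u(x,t)-u(x^e,t)$ on the half-domain $\Sigma(e)\times(t_0,\infty)$. By $(U_0)$ and the comparison/monotonicity machinery behind the "start" of the RPM (the linearization in Lemma~\ref{T:l} together with the maximum principle of Lemma~\ref{weakmp}), $w$ satisfies a linear parabolic inequality of the form $w_t-\Delta w = c(x,t)w\geq 0$-type structure with $w\geq 0$ at the initial time and $w=0$ on the boundary of $\Sigma(e)$ that is interior to $\Sigma$; since $u_0\not\equiv u_0\circ\sigma_e$ with $u_0(x)\geq u_0(x^e)$, propagation via the strong maximum principle and Harnack inequality (Lemma~\ref{harnack:inequality}) forces $w>0$ in $\Sigma(e)$ for all $t>t_0$, in particular $u(\cdot,t_0+T)\not\equiv u(\cdot,t_0+T)\circ\sigma_e$ where $T$ is the period — but $u(\cdot,t_0+T)=u(\cdot,t_0)=z$, which is $\sigma_e$-symmetric, a contradiction. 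Therefore the second alternative cannot occur, and Theorem~\ref{unbounded}(1) gives $p\in\S$ such that every $z\in\omega(u)$ is foliated Schwarz symmetric with respect to $p$ and strictly decreasing in the polar angle.

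Finally I would transfer this to all time slices. Again by periodicity, for each $t\geq 0$ the function $u(\cdot,t)$ lies in $\omega(u)$ (it is the limit of $u(\cdot,t+kT)$ as $k\to\infty$), so $u(\cdot,t)$ is foliated Schwarz symmetric with respect to the same $p$ and strictly decreasing in $\theta$, which is the claim.

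\medskip

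The main obstacle I expect is the rigorous justification that the second alternative contradicts periodicity: one must argue carefully that a $\sigma_e$-symmetric element of $\omega(u)$ is actually an \emph{attained} value $u(\cdot,t_0)$ (using that for periodic solutions $\omega(u)$ is the periodic orbit, not merely a limit set), and then run the strong-maximum-principle/Harnack argument for the difference $w=u-u\circ\sigma_e$ on the half-domain to show strict reflectional inequality is \emph{preserved} under the flow once it holds at one time — this is exactly the rigidity at the heart of the "start" of the RPM, so I would cite Lemma~\ref{T:l} and Lemmas~\ref{weakmp}--\ref{harnack:inequality} rather than redo it, but ensuring the linearization and sign conditions are valid on the relevant time interval (where $u$ stays in a fixed $L^\infty$ ball, by $(U_2)$, so $(f_1)$ gives a bounded coefficient $c$) requires some care.
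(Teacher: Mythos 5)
Your proposal is correct and follows essentially the same route as the paper: apply Theorem~\ref{unbounded}, then use the strong maximum principle (via the linearization $w_e = u - u\circ\sigma_e$ and the initial strict reflectional inequality $(U_0)$) to rule out the second alternative, and finally observe that for a periodic solution every time slice $u(\cdot,t)$ belongs to $\omega(u)$. The paper's own proof is a one-line remark that the initial reflectional inequality together with the maximum principle discards the second alternative; your write-up simply fills in those details, and the only blemish is a small inconsistency in defining $w$ on $\Sigma(e)\times(t_0,\infty)$ while invoking $(U_0)$ at $t=0$ — the argument should be run on $\Sigma(e)\times(0,\infty)$ so that $(U_0)$ applies at the parabolic boundary, after which $w>0$ for all $t>0$ and in particular at $t_0$ (or $t_0+T$), giving the contradiction.
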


\begin{coro}\label{corollary:elliptic}
 Let $f:I \times \R \to\R$, $(r,u) \mapsto f(r,u)$ be continuous in $r$ and continuously differentiable in $u,$ with $f_u$ uniformly continuous and bounded with respect to $r.$   
 Moreover, assume that there are constants $\rho,\gamma>0$ such that $f_u(|x|,0)<-\gamma$ for all $|x|>\rho.$ Let $u\in C^{2}(\Sigma)\bigcap C(\overline{\Sigma})$ be a classical uniformly bounded solution of 
\begin{equation*}
\begin{aligned}
 -\Delta u =  f(|x|,u)\quad \text{ in }\Sigma,\qquad
    u(x) = 0 \quad \text{ on }\partial \Sigma,\qquad
    \lim_{|x|\to\infty}u(x)=0.
\end{aligned}
\end{equation*} 
If there is $e\in \S$ such that $u(x)\geq u(\sigma_e(x))$ in $\Sigma(e)$ and $u\not\equiv u\circ\sigma_e,$ then $u$ is foliated Schwarz symmetric with respect to some $p\in \S$ and $u$ is strictly decreasing in the polar angle.
\end{coro}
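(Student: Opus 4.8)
The plan is to obtain Corollary~\ref{corollary:elliptic} as a special case of Theorem~\ref{unbounded}, viewing the elliptic solution $u$ as a stationary solution of the associated parabolic problem. Concretely, I would set $\tilde f(t,r,s):=f(r,s)$, so that $\tilde f : [0,\infty)\times I\times\R\to\R$, and put $\tilde u(x,t):=u(x)$. Then $\tilde u\in C^{2,1}(\Sigma\times(0,\infty))\cap C(\overline{\Sigma}\times[0,\infty))$ is a classical global solution of~\eqref{model} with nonlinearity $\tilde f$ and initial datum $u_0:=u$, since $\tilde u_t-\Delta\tilde u=-\Delta u=f(|x|,u)=\tilde f(t,|x|,\tilde u)$, $\tilde u=0$ on $\partial\Sigma$, and $\tilde u(\cdot,0)=u$. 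Moreover $u\in C(\overline{\Sigma})$ vanishes on $\partial\Sigma$ and at infinity, hence $u\in C_0(\overline{\Sigma})$.

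Next I would check that $\tilde u$ and $\tilde f$ satisfy the hypotheses of Theorem~\ref{unbounded}. Assumption $(U_0)$ holds verbatim with the $e\in\S$ of the hypothesis since $u_0=u$; $(U_1)$ follows from $\lim_{|x|\to\infty}u(x)=0$; and $(U_2)$ is the assumed uniform boundedness of $u$. For $(f_0)$ one uses that $s\mapsto f(s,0)$ is continuous, hence bounded on each compact $[R,r]$. The two conditions in $(f_1)$ are exactly the assumed uniform continuity of $f_u$ in $u$ (uniformly in $r\in I$) and the boundedness of $f_u$ in $r$ on bounded $u$-intervals. The only mildly delicate point is $(f_2)$: from $f_u(|x|,0)<-\gamma$ for $|x|>\rho$ and the uniform continuity of $f_u$, I would pick $\varepsilon>0$ with $|f_u(r,s)-f_u(r,0)|<\gamma/2$ for all $r\in I$ whenever $|s|<\varepsilon$, obtaining $f_u(r,s)<-\gamma/2$ for all $r=|x|>\rho$ and $s\in(-\varepsilon,\varepsilon)$, i.e.\ $(f_2)$ with $\gamma$ replaced by $\gamma/2$.

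Theorem~\ref{unbounded} then applies to $\tilde u$. Since $\tilde u$ is time-independent, its $\omega$-limit set in the sense of~\eqref{omega} is the singleton $\omega(\tilde u)=\{u\}$, and this is what makes the argument short: alternative~(2) of Theorem~\ref{unbounded} would produce some $z\in\omega(\tilde u)$ with $z\equiv z\circ\sigma_e$, i.e.\ $u\equiv u\circ\sigma_e$, contradicting the hypothesis $u\not\equiv u\circ\sigma_e$. Hence alternative~(1) holds, and applied to the unique element $u\in\omega(\tilde u)$ it states precisely that $u$ is foliated Schwarz symmetric with respect to some $p\in\S$ and strictly decreasing in the polar angle, which is the claim. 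I do not expect a genuine obstacle here, since the substance is carried by Theorem~\ref{unbounded}; the only things to be careful about are the translation of the regularity assumption $(f_1)$ from the phrasing ``$f_u$ uniformly continuous and bounded with respect to $r$'', the strip-promotion for $(f_2)$ above, and the verification that $\omega(\tilde u)$ indeed reduces to $\{u\}\subset C_0(\overline{\Sigma})$ so that the dichotomy can be invoked at all. (The maximum principle alluded to in the text for discarding the second alternative is what is actually needed in the periodic Corollary~\ref{corollary:periodic}, where $\omega(u)$ is no longer a singleton; in the stationary case the exclusion is immediate.)
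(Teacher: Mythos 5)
Your proof is correct and follows the same route as the paper's one-line proof of this corollary: view $u$ as a stationary solution of the parabolic problem, apply Theorem~\ref{unbounded}, and exclude alternative~(2) because $\omega(\tilde u)=\{u\}$ while $u\not\equiv u\circ\sigma_e$. The paper handles Corollaries~\ref{corollary:periodic} and~\ref{corollary:elliptic} jointly and invokes ``the strict initial reflectional inequality and the maximum principle''; your observation that the maximum principle is actually superfluous in the stationary case (being needed only in the periodic one, where $\omega(u)$ is not a singleton) is a correct and slightly sharper reading, and your verifications of $(f_1)$ and the strip-promotion for $(f_2)$ via uniform continuity of $f_u$ are the needed details the paper leaves implicit.
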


Similar corollaries can be stated for Theorem~\ref{strong:thm}. In fact, for elliptic and periodic parabolic problems a slightly simpler proof can be done, where assumption $(f_1)$ can be weakened (as in \cite{saldana-weth}, since Lemma~\ref{lemma5} is not needed).  

Under a more \textquotedblleft geometrically stable\textquotedblright\ assumption on $u_0$, we can prove that if some $z\in\omega(u)$ has $H(e)$ as symmetry hyperplane, then $z$ is radial. This assumption is the following:
\begin{itemize}
 \item [$(U_0)'$] The initial profile $u_0$ belongs to $C_0(\overline{\Sigma})$ and there exists an open set $U\subset \S$ such that 
 \begin{align*}
u_0\geq u_0\circ\sigma_e\quad \text{ in $\Sigma(e)$ for all $e\in U.$}
 \end{align*}
\end{itemize}

\begin{theo}\label{theorem:unbounded}
Assume the same hypothesis as in Theorem~\ref{unbounded} but with $(U_0)'$ instead of $(U_0)$.  Then, only one of the following alternatives happen:
\begin{enumerate}
 \item $u$ is asymptotically foliated Schwarz symmetric with respect to some $p\in \S$, i.e. all elements in $\omega(u)$ are strictly foliated Schwarz symmetric with respect to $p$; that is, all the elements in $\omega(u)$ are axially symmetric and strictly decreasing in the polar angle.
 \item There exists at least one $z\in\omega(u)$ such that $z$ is radially symmetric with respect to the origin.
\end{enumerate}
\end{theo}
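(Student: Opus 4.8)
The plan is to deduce the dichotomy from Theorem~\ref{unbounded}, exploiting that $(U_0)'$ supplies an \emph{open} family of admissible reflection directions, and then promoting a single reflectional symmetry of an element of $\omega(u)$ to full radial symmetry. I rely on two facts. (i) \emph{(Elementary symmetry fact)} A continuous function on $\overline\Sigma$ that is invariant under $\sigma_h$ for every $h$ in some nonempty open subset of $\S$ is radial about the origin: for $N\ge2$, products $\sigma_h\sigma_{h'}$ of nearby such reflections are rotations close to the identity, so these reflections generate a subgroup of $O(N)$ acting transitively on each sphere centered at the origin. (ii) \emph{(Start of the rotating plane method)} For every $e\in\S$ with $u_0\ge u_0\circ\sigma_e$ in $\Sigma(e)$, one has $z\ge z\circ\sigma_e$ in $\Sigma(e)$ for all $z\in\omega(u)$; this is obtained by applying the maximum principle (Lemma~\ref{weakmp}) to the linearization (Lemma~\ref{T:l}) of~\eqref{model} satisfied by $w(x,t):=u(x,t)-u(\sigma_e x,t)$ on $\Sigma(e)\times(0,\infty)$, using $(U_1)$ and $(U_2)$ to control $w$ at spatial infinity, and then passing to the limit via~\eqref{omega:unif}. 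Recall also that $\omega(u)$ is nonempty and compact in $C_0(\overline\Sigma)$ by Lemma~\ref{regularity:lemma}.

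First I would handle the degenerate case in which $u_0\equiv u_0\circ\sigma_e$ for \emph{every} $e\in U$. By (i), $u_0$ is radial about the origin, so $u_0\equiv u_0\circ\sigma_e$ in $\Sigma(e)$ for every $e\in\S$; then (ii), applied to both $e$ and $-e$, gives $z\ge z\circ\sigma_e$ everywhere in $\Sigma$, hence $z=z\circ\sigma_e$ on $\overline\Sigma$, for all $z\in\omega(u)$ and all $e\in\S$, so every element of $\omega(u)$ is radial and alternative~(2) holds. Otherwise, $U':=\{e\in U:\ u_0\not\equiv u_0\circ\sigma_e\}$ is a nonempty open subset of $\S$ (the intersection of $U$ with the complement of the closed set $\{e:\ u_0\equiv u_0\circ\sigma_e\}$, closed by continuity of $e\mapsto u_0\circ\sigma_e$). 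For each $e\in U'$, hypothesis $(U_0)$ holds with that $e$, so Theorem~\ref{unbounded} applies. If its alternative~(1) holds for some $e\in U'$, then $u$ is asymptotically strictly foliated Schwarz symmetric and alternative~(1) of Theorem~\ref{theorem:unbounded} holds; this excludes alternative~(2), since a function that is strictly decreasing in the polar angle is not radial about the origin. It remains to treat the case in which alternative~(2) of Theorem~\ref{unbounded} holds for \emph{every} $e\in U'$: for each such $e$ there is $z_e\in\omega(u)$ with $z_e=z_e\circ\sigma_e$ on $\overline\Sigma$.

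The core step bootstraps this single symmetry. Fix $e\in U'$ and set $z_0:=z_e$. By (ii) and $(U_0)'$,
\begin{equation}\label{plan:start}
z_0\ge z_0\circ\sigma_{e'}\quad\text{in }\Sigma(e')\qquad\text{for every }e'\in U.
\end{equation}
Fix $e'\in U$ and set $g:=\sigma_e(e')\in\S$. Substituting $x=\sigma_e(y)$ in~\eqref{plan:start} and using that $\sigma_e$ is an isometry with $\sigma_e(\Sigma)=\Sigma$, the identity $\sigma_{e'}\sigma_e=\sigma_e\sigma_g$ (which follows from $\sigma_e\sigma_{e'}\sigma_e=\sigma_{\sigma_e(e')}=\sigma_g$), and $z_0\circ\sigma_e=z_0$, I obtain $z_0\ge z_0\circ\sigma_g$ in $\Sigma(g)$; as $e'$ ranges over $U$ this gives $z_0\ge z_0\circ\sigma_g$ in $\Sigma(g)$ for every $g\in\sigma_e(U)$. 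Reflecting this across $H(g)$ (apply $\sigma_g$, noting $\sigma_g=\sigma_{-g}$ and $\sigma_g(\Sigma(g))=\Sigma(-g)$) yields $z_0\le z_0\circ\sigma_h$ in $\Sigma(h)$ for every $h\in-\sigma_e(U)$. Since $e\in U$ and $e=-\sigma_e(e)\in-\sigma_e(U)$, the set $V:=U\cap(-\sigma_e(U))$ is a nonempty open subset of $\S$; for $h\in V$, combining~\eqref{plan:start} (with $e'=h$) with the last inequality gives $z_0=z_0\circ\sigma_h$ in $\Sigma(h)$, hence on all of $\overline\Sigma$. By (i), $z_0$ is radial about the origin, so alternative~(2) holds. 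Since alternatives~(1) and~(2) are mutually exclusive (a strictly foliated Schwarz symmetric function is never radial about the origin), exactly one of them occurs.

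The main obstacle is precisely this bootstrapping step: Theorem~\ref{unbounded} yields, for each admissible direction $e$, only a single and a priori $e$-dependent element of $\omega(u)$ symmetric across $H(e)$, whereas the conclusion demands one element symmetric across a whole open family of hyperplanes. What bridges the gap without re-running the perturbation part of the rotating plane method is the conjugation identity $\sigma_{e'}\sigma_e=\sigma_e\sigma_{\sigma_e(e')}$, which transfers the cheap ``start'' inequality~\eqref{plan:start} for a direction $e'\in U$ to an inequality for the reflected direction $\sigma_e(e')$; sweeping $e'$ over the open set $U$ then pins $z_0$ down as radial. The rest is routine: openness of $U'$, nonemptiness and compactness of $\omega(u)$ via Lemma~\ref{regularity:lemma}, and the incompatibility of the two alternatives.
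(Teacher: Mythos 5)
Your proof is correct, and it follows the same high-level skeleton as the paper's (reduce the dichotomy to Theorem~\ref{unbounded}, then promote a single reflectional symmetry to radiality), but the key rigidity step is handled by a genuinely different argument. The paper disposes of the case ``for each $e\in U$ some $z\in\omega(u)$ has $z_e\equiv 0$'' in one line by citing Lemma~\ref{lemma2.5}, whose proof reduces to the one-dimensional polar-angle monotonicity statements Lemma~\ref{lemma1} and Lemma~\ref{lemma2}. You instead re-derive the conclusion of Lemma~\ref{lemma2.5} from scratch: the conjugation identity $\sigma_{e'}\sigma_e=\sigma_e\sigma_{\sigma_e(e')}$, applied to a $z_0$ with $z_0\equiv z_0\circ\sigma_e$, transports the ``cheap'' reflectional inequalities $z_0\ge z_0\circ\sigma_{e'}$ ($e'\in U$) to the reflected directions $\sigma_e(U)$, and reflecting again yields reverse inequalities on $-\sigma_e(U)$; on the nonempty open set $V=U\cap(-\sigma_e(U))\ni e$ the two inequalities combine to equalities, and your elementary fact (i) (reflections through an open family of hyperplanes through the origin generate a subgroup of $O(N)$ acting transitively on spheres) then gives radiality. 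I verified the algebra: $\sigma_e(e)=-e$ so $e\in V$, and the substitution $x=\sigma_e(y)$ correctly passes from $\Sigma(e')$ to $\Sigma(\sigma_e(e'))$. This is a valid, more coordinate-free and group-theoretic alternative to the paper's analytic Lemma~\ref{lemma2}, at the cost of not reusing machinery already built for Lemma~\ref{lemma2:cor}. Two minor remarks: (a) your separate ``degenerate case'' ($u_0$ radial) is subsumed by the paper's case split and also by your own bootstrapping step applied to any $e\in U$, so it is an unnecessary (but harmless) detour — alternatively, if $u_0$ is radial then $u(\cdot,t)$ is radial for all $t$ by uniqueness, making this case trivial; (b) fact (ii) only needs the linearization \eqref{Pie} and the weak maximum principle Lemma~\ref{weakmp}, not the modified equation of Lemma~\ref{T:l}, so the citation is slightly off though the argument is sound (the required bound $\sup c^e<\infty$ follows from $(f_1)$, $(f_2)$, $(U_1)$ and $(U_2)$).
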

Functions that satisfy $(U_0)'$ are, for example, positive functions with support in a one-sided cone, or sign changing functions with support in two (disjoint) cones being positive in one side and negative in the other (see Figure~\ref{u0} and Section~\ref{e1}).  The proof of Theorem~\ref{theorem:unbounded} uses Theorem~\ref{unbounded} plus some geometric results that characterize the rigidity that reflectional inequalities impose on a function (see Lemmas~\ref{lemma2:cor} and~\ref{lemma2.5}).

\begin{figure}[!h]
\begin{center}
\includegraphics[width=.50\textwidth]{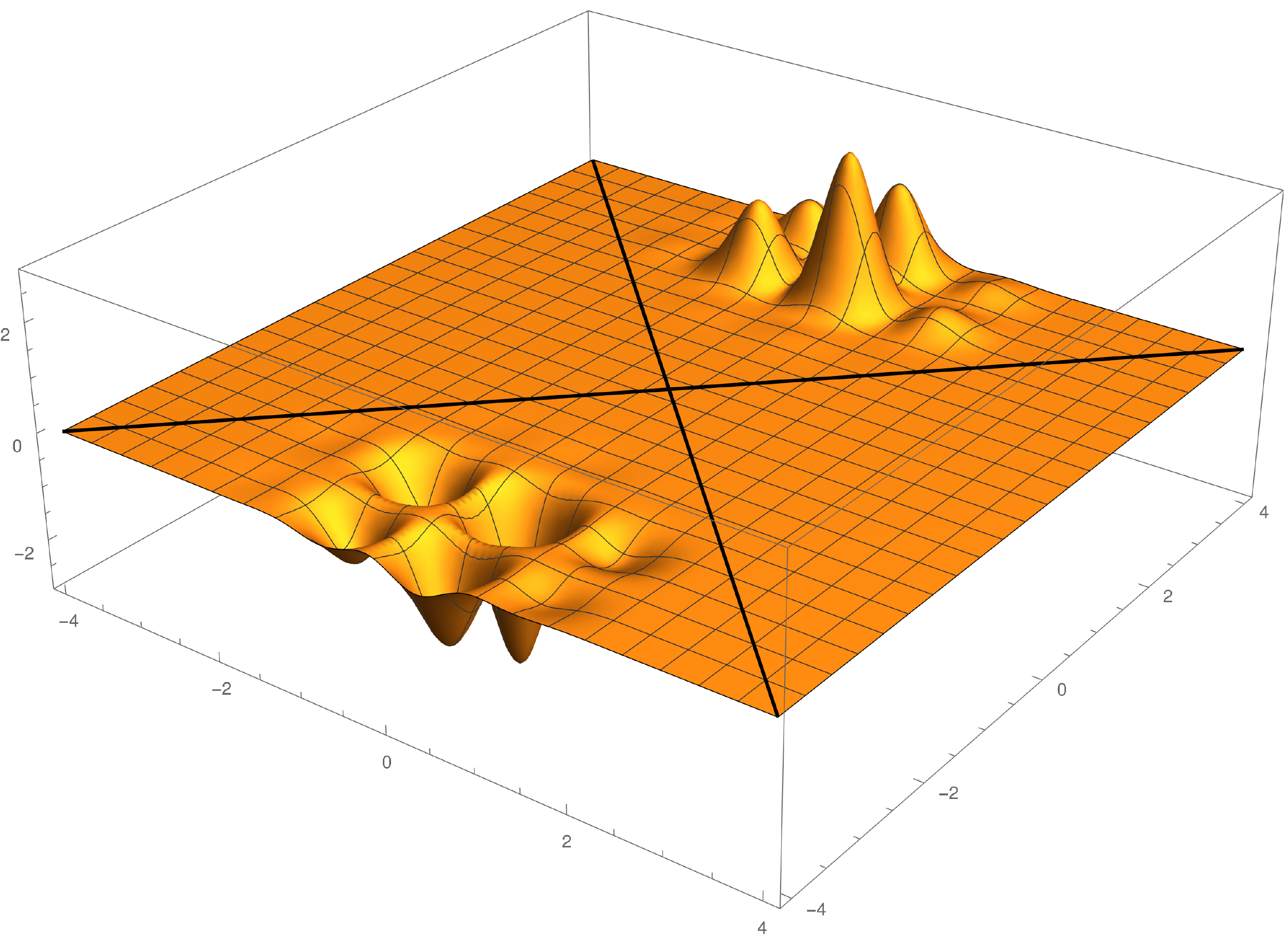}
\begin{picture}(150,140)
    \put(0,5){\quad\includegraphics[width=.30\textwidth]{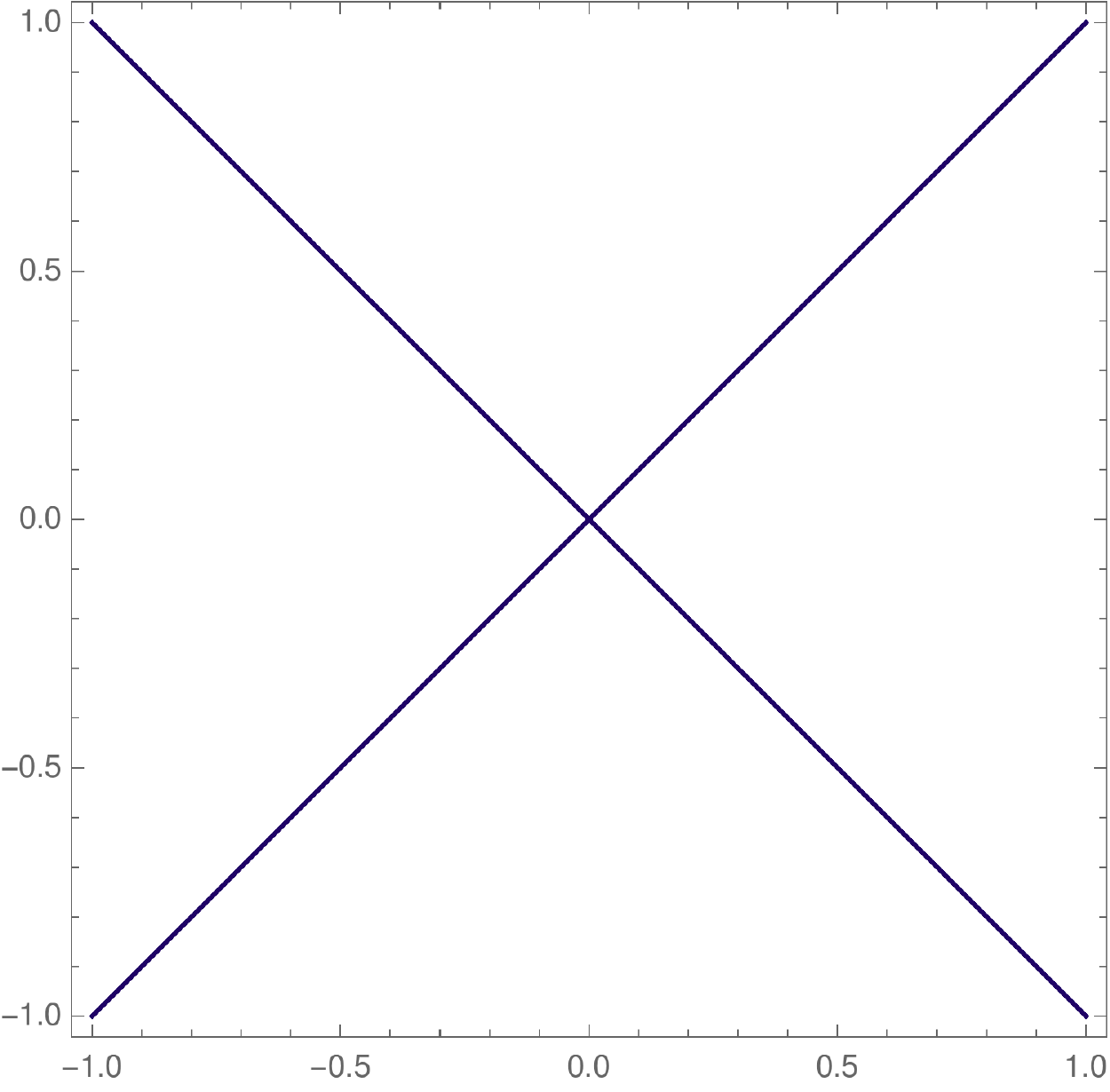}}
    \put(60,120){$u_0=0$}
    \put(60,30){$u_0=0$}
    \put(25,75){$u_0\leq 0$}
    \put(100,75){$u_0\geq 0$}
  \end{picture}
  \end{center}
\caption{Example of a function $u_0$ satisfying $(U_0)'$.}\label{u0}
\end{figure}

Finally, we remark that Theorem~\ref{unbounded} can be extended to the quasilinear setting as in \cite{polacik:unbounded}. The main obstacle for a generalization to fully nonlinear problems is, as noted in \cite[Section 4]{polacik:unbounded}, the generalization of the subsolution given in Lemma~\ref{lemma5}. However, Theorem~\ref{strong:thm} can be extended to fully nonlinear problems as in the bounded domain case \cite{saldana-weth,polacik}.  Our results consider only the semilinear setting for simplicity and to make the main ideas more transparent. 

\subsubsection*{On the uniform decay assumption}

In Theorem \ref{thm1}, the assumption $(U_1)$ is not stated as a hypothesis, since it can be obtained using the particular form of the equation and comparison principles with suitable supersolutions.  In this part of the introduction, we discuss some other conditions that can be used to guarantee that $(U_1)$ holds.  

In \cite[Corollary 1.2]{busca} it is proved that $(U_1)$ is satisfied if the initial condition has compact support and the nonlinearity does not depend on $|x|.$  Following the same idea, we give below sufficient conditions to achieve the uniform decay assumption $(U_1)$ in settings where nonradial solutions (and nonradial limit profiles) could be expected, for example, if the nonlinearity $f:[0,\infty)\times[0,\infty)\times\mathbb R\to\mathbb R$ is such that, for some $R_0>0$,
\begin{align}\label{weak:monotonicity}
f(t,r_1,u)\leq f(t,r_2,u),\ \text{ for all } 0\leq r_1 \leq R_0 \leq r_2 \text{ or } R_0\leq r_1 \leq r_2,
\end{align}
and all $t>0$, $u\in\mathbb R.$ In particular, $f$ can be non-monotone inside a ball of radius $R_0.$ Then, we have the following lemmas.

\begin{lemma}\label{UD1} Let $\Sigma=\mathbb R^N$ and $u$ a solution of~\eqref{model}, assume that $\partial_u f(t,r,u)$ is bounded uniformly in $t$ and $r,$ and that $f(t,r,u)$ satisfies~\eqref{weak:monotonicity}.  If 
\begin{enumerate}
 \item $u_0\geq 0,$
 \item $u_0$ has compact support,
 \item $||u(\cdot,t)||_{L^\infty(\mathbb R^N)}\leq C$ and $||u(\cdot,t)||_{L^q(\mathbb R^N)}\leq C$ for some $C>0,$ $1\leq q<\infty$ and all $t>0,$
\end{enumerate}
then $u$ satisfies $(U_1)$.
\end{lemma}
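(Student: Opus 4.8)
The plan is to prove the uniform decay $(U_1)$ for the solution $u$ of \eqref{model} on $\Sigma = \R^N$ under the listed hypotheses by constructing a radially symmetric, time-independent supersolution of a linear problem that dominates $u$ and decays to zero at spatial infinity. First I would fix the constant $\mu := \sup_{t,r,s}|\partial_u f(t,r,s)|$, which is finite by hypothesis, and use assumptions (1)--(3) to reduce to a linear comparison argument. The key observation is that, writing $f(t,r,u) = f(t,r,0) + \big(\int_0^1 \partial_u f(t,r,\tau u)\,d\tau\big)u$, the nonhomogeneous term $f(t,r,0)$ is controlled: evaluating the monotonicity assumption \eqref{weak:monotonicity} with $u = 0$ and letting $r_2 \to \infty$ shows that $f(t,r,0)$ is bounded above for $r \geq R_0$ by $\lim_{r\to\infty} f(t,r,0)$, and since $0$ is expected to be (at least a subsolution-compatible) state we should get $f(t,r,0)\le 0$ for $r$ large, or at worst a controlled sign; together with $|\partial_u f|\le \mu$ this gives, for $|x|$ large, the differential inequality $u_t - \Delta u \le \mu\, u$ (using $u\ge 0$, which follows from (1) and the maximum principle since $u_0\ge 0$).

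Next I would invoke the known compact-support result, namely \cite[Corollary 1.2]{busca} cited just above, to handle the region near the support of $u_0$: by (2) the initial datum is supported in some ball $B_{\rho_0}$, and \eqref{weak:monotonicity} on the complement of $B_{R_0}$ is exactly the (nonincreasing-in-$r$, or two-sided) radial structure that lets one adapt that argument. The heart of the matter is a comparison on $\R^N\setminus B_{\rho_1}$ for suitably large $\rho_1 \ge \max(R_0,\rho_0)$: there I would build an explicit supersolution of $w_t - \Delta w = \mu w$ of the form $w(x) = C\,\psi(|x|)$ where $\psi$ is a decaying solution of the ODE $-\psi'' - \tfrac{N-1}{r}\psi' = \mu\psi$ — for instance a modified-Bessel-type profile, or more simply $\psi(r) = r^{2-N}$ corrected by a lower-order term, or the Green-function-type barrier $\psi(r)=r^{-(N-2)/2}$ — decaying as $r\to\infty$, scaled so that $C\psi(\rho_1) \ge \sup_{t}\|u(\cdot,t)\|_{L^\infty}$ (finite by (3)) on the inner boundary sphere $|x|=\rho_1$, and so that $w(x,0)=C\psi(|x|) \ge 0 = u_0(x)$ for $|x|\ge\rho_1$. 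Actually to get decay I would rather exploit the $L^q$ bound in (3): smoothing estimates for the heat semigroup (or the equation with the bounded potential) give $\|u(\cdot,t)\|_{L^\infty(|x|>\rho_1)} \to 0$–type decay; combined with the $L^\infty$ bound this yields $\sup_t u(x,t) \to 0$ uniformly as $|x|\to\infty$ by a standard interpolation/Gagliardo--Nirenberg argument on large annuli.

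The cleanest route is probably: (a) from (1) and the maximum principle, $0\le u \le M$ everywhere; (b) set $v = e^{-\mu t}u \ge 0$, so $v_t - \Delta v \le 0$ on the region where $f(t,r,0)\le 0$, i.e.\ for $|x|\ge R_1$ with $R_1$ large, hence $v$ is a subsolution of the heat equation there with bounded boundary data $M$ on $\{|x|=R_1\}$ and on $\{t=0\}$ the datum is $0$ for $|x|\ge\rho_0$; (c) compare $v$ with the solution $\bar v$ of the pure heat equation on the exterior of $B_{R_1}$ with these data, and use the $L^q$ bound on $u_0$ (or just compact support) plus the explicit exterior-domain heat kernel to conclude $\sup_{t>0}\bar v(x) \to 0$ as $|x|\to\infty$; then $\sup_t u(x,t) = e^{\mu t}$–factors reappear, which is a problem — so instead I keep $v=u$ and compare directly against the stationary exterior supersolution $w(x)=C|x|^{2-N}$ of $-\Delta w = 0 \ge \mu w - (\text{bounded above term})$ only after using $(f_2)$-type negativity; this forces me to actually use that $f(t,r,0)+\mu^+ u$ has the right sign. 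The main obstacle, then, is reconciling the possibly positive potential term $\partial_u f$ (only bounded, not sign-definite) with the need for a \emph{stationary decaying} supersolution: a bounded positive potential will in general prevent a stationary exterior supersolution from decaying, so the argument must genuinely use the $L^q$-boundedness in (3) — effectively a dispersive/smoothing estimate showing the mass of $u$ cannot concentrate at spatial infinity — rather than pure comparison. I expect that step, turning the uniform $L^\infty\cap L^q$ bound into uniform spatial decay via heat-semigroup $L^q$–$L^\infty$ estimates applied on the half-line of times with the bounded potential absorbed by Gronwall, to be where the real work lies.
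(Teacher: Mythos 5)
There is a genuine gap: your argument never uses the moving plane method, which is the core of the paper's proof. Condition \eqref{weak:monotonicity}, together with $u_0\geq 0$ and the compact support of $u_0$, is not there to give a sign for $f(t,r,0)$; it only says that $r\mapsto f(t,r,u)$ is nondecreasing across and beyond $R_0$, so your claim that one can conclude $f(t,r,0)\leq 0$ for $r$ large is unjustified. Its actual role is the following. For $\lambda>R_0$ and $x^\lambda:=(2\lambda-x_1,x')$, the difference $V_\lambda u(x,t):=u(x^\lambda,t)-u(x,t)$ satisfies, by \eqref{weak:monotonicity},
\begin{equation*}
(V_\lambda u)_t-\Delta V_\lambda u\;\geq\; c^\lambda(x,t)\,V_\lambda u\qquad\text{on }\{x_1>\lambda\}\times(0,\infty),
\end{equation*}
with $c^\lambda$ bounded (by the uniform bound on $\partial_u f$). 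Since $u_0\geq 0$ has compact support, $V_\lambda u(\cdot,0)\geq 0$ and $\not\equiv 0$ once $\lambda$ exceeds the support radius, so the maximum principle and Hopf's lemma give $V_\lambda u>0$ in the half-space and $\partial_{x_1}u<0$ on $\{x_1=\lambda\}$; running the same argument in every direction $e\in\S$ yields $\nabla u\cdot e<0$ whenever $x\cdot e>R_0$. This radial-type monotonicity far from the origin is precisely the ingredient your proposal lacks, and it is where all three hypotheses on $u_0$ and $f$ are actually consumed.

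Your instinct that a stationary decaying barrier cannot absorb a merely bounded, sign-indefinite potential is sound, and the appeal to dispersive $L^q$--$L^\infty$ smoothing is never made concrete and would be delicate here (the potential is only bounded and $f(t,r,0)$ need not decay). The paper instead closes the argument by contradiction, and this is where the $L^q$ bound enters: if $u(x_n,t_n)\geq k>0$ along $|x_n|,t_n\to\infty$, then the uniform $L^\infty$ bound and interior parabolic estimates give a uniform gradient bound, hence $u(\cdot,t_n)\geq k/2$ on a fixed ball $B_\varepsilon(x_n)$; the monotonicity $\nabla u\cdot e<0$ then propagates the lower bound $u(\cdot,t_n)\geq k$ (or $\geq k/2$ on an $\varepsilon$-tube) along the whole segment from $\partial B_{R_0}$ out to $x_n$, a region of volume tending to infinity, contradicting $\|u(\cdot,t_n)\|_{L^q}\leq C$. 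So the $L^q$ hypothesis is used only at this last step, via the monotonicity, not via semigroup smoothing.
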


\begin{lemma}\label{UD2}Let $\Sigma=\mathbb R^N$ and assume $\partial_u f(t,r,u)$ is bounded uniformly in $t$ and $r,$ and that $f(t,r,u)$ satisfies~\eqref{weak:monotonicity}.  If 
\begin{enumerate}
\item $u_0$ has compact support,
\item the problems
\begin{equation}\label{dfv}
v_t -\Delta v = f(t,|x|,v)\quad \text{ in }\mathbb R^N \times (0,\infty),\qquad
   v(x,0) = u^\pm_0(x)\quad \text{ for } x\in \R^N,
\end{equation}
where $u^+_0$ and $u^-_0$ denote the positive and negative part of $u_0$ respectively, have global solutions $v_1$ and $v_2$ such that $||v_i(\cdot,t)||_{L^\infty(\mathbb R^N)}\leq C$ and $||v_i(\cdot,t)||_{L^q(\mathbb R^N)}\leq C$ for some $C>0,$ $1\leq q<\infty$, and all $t>0,$ $i=1,2,$
\end{enumerate}
then any solution $u$ of~\eqref{model} satisfies $(U_1)$.
\end{lemma}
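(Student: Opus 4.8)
The plan is to reduce the statement to Lemma~\ref{UD1} by sandwiching $u$ between solutions of one‑signed auxiliary problems and then invoking the comparison principle. First I would record that, since $u_0\in C_0(\R^N)$ has compact support, so do its positive and negative parts $u_0^+,u_0^-$, and that the solutions $v_1,v_2$ of \eqref{dfv} furnished by hypothesis are globally bounded in $L^\infty(\R^N)$ and in $L^q(\R^N)$, uniformly in $t$. Next I would set up the comparison: since $u_0\le u_0^+$ and $u,v_1$ solve the same equation, the difference $w:=v_1-u$ satisfies a linear parabolic problem $w_t-\Delta w=c(x,t)\,w$ in $\R^N\times(0,\infty)$ with $w(\cdot,0)=u_0^-\ge 0$, where the coefficient $c(x,t)=\int_0^1\partial_u f\bigl(t,|x|,\theta u(x,t)+(1-\theta)v_1(x,t)\bigr)\,d\theta$ is bounded because $\partial_u f$ is bounded uniformly in $(t,r)$. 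Since $v_1$ and $u$ are bounded, $w$ is bounded, and the maximum principle for bounded solutions of linear parabolic equations on $\R^N$ gives $w\ge 0$, i.e. $u\le v_1$ on $\R^N\times[0,\infty)$. Symmetrically, working with $-u$ — which solves the equation with nonlinearity $\tilde f(t,r,s):=-f(t,r,-s)$ (again with bounded $\tilde f_u$) and initial datum $-u_0$, and noting $-u_0\le u_0^-$ — and comparing against the solution built from $u_0^-$, I would obtain a pointwise lower barrier for $u$ of the same type.

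With the two‑sided bound in hand, I would apply Lemma~\ref{UD1}: the datum $u_0^+$ is nonnegative with compact support, $v_1$ is uniformly bounded in $L^\infty\cap L^q$, $\partial_u f$ is uniformly bounded, and $f$ satisfies \eqref{weak:monotonicity}, so Lemma~\ref{UD1} gives $\lim_{|x|\to\infty}\sup_{t>0}v_1(x,t)=0$; combined with $u\le v_1$ this yields the upper half of $(U_1)$. Running the identical argument for the lower barrier — whose governing nonlinearity again has bounded $u$‑derivative and, because the non‑monotone region of $f$ stays compactly contained in a fixed ball $B_{R_0}$, inherits the spatial structure required by Lemma~\ref{UD1} (or its mirror image) — gives the lower half, and together $\lim_{|x|\to\infty}\sup_{t>0}|u(x,t)|=0$, which is $(U_1)$.

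I expect the delicate points to be twofold. First, since the domain is all of $\R^N$ there is no compactness, so the comparison principle must be used in its version for bounded solutions; this is precisely where the global boundedness of $v_1,v_2$ (and of $u$) enters and cannot be dispensed with. Second, the sign bookkeeping forced by the fact that $u$ changes sign: one cannot reduce to Lemma~\ref{UD1} directly for $u$, only for the one‑signed envelopes associated with $u_0^+$ and $u_0^-$ and for the correspondingly reflected nonlinearities, and one must check carefully that \eqref{weak:monotonicity} — or its mirror version — is inherited by these auxiliary problems. Everything else (the mean value representation of the coefficient $c$, the parabolic maximum principle, and the bookkeeping with compact supports) is routine.
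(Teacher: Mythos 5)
Your plan reproduces the paper's strategy: compare $u$ with the one‑signed solutions $v_1,v_2$ of \eqref{dfv}, linearize the difference via the fundamental theorem of calculus to get a linear parabolic equation with a coefficient bounded because $\partial_u f$ is uniformly bounded, invoke the maximum principle on $\mathbb R^N$ for bounded solutions, and then transfer the uniform decay of the barriers obtained from Lemma~\ref{UD1}. The upper half of the sandwich, $w=v_1-u$ with $w(\cdot,0)=u_0^+-u_0=u_0^-\ge 0$ and $v_1\ge u$, is exactly what the paper does. Where you go further than the paper is in making explicit the sign flip $u\mapsto -u$, $f\mapsto\tilde f(t,r,s):=-f(t,r,-s)$ that is actually needed to phrase the lower barrier as an application of Lemma~\ref{UD1}; the paper only says ``using the solution with $v(x,0)=u_0^-$ and repeating the argument.''

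However, the very point you flag as requiring careful checking is in fact where the proposal does not close, and the reason is not the location of the non‑monotone region but the \emph{direction} of the monotonicity. If $r_1\le r_2$ lie in the range where \eqref{weak:monotonicity} applies, then $f(t,r_1,-s)\le f(t,r_2,-s)$, and therefore
\begin{equation*}
\tilde f(t,r_1,s)-\tilde f(t,r_2,s)=f(t,r_2,-s)-f(t,r_1,-s)\ge 0,
\end{equation*}
so $\tilde f$ satisfies the \emph{reverse} of \eqref{weak:monotonicity}. Consequently Lemma~\ref{UD1} cannot be invoked for the $\tilde f$‑problem with initial datum $u_0^-$; the moving‑plane inequality in its proof runs in the wrong direction for $\tilde f$. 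Your phrase ``or its mirror image'' gestures at a version of Lemma~\ref{UD1} with the reversed monotonicity and nonpositive data, but no such statement is available in the paper, and proving one is an additional (nontrivial) step, not a routine check. (Note also that the paper's own phrasing does not obviously avoid this: Lemma~\ref{UD1} applied to $v_2$ — the solution of the \emph{original} $f$‑equation with datum $u_0^-\ge 0$ — does give decay of $v_2$, but $v_2$ does not serve as a lower barrier for $u$ through a linearized comparison unless $f$ is odd in $u$, in which case $\tilde f=f$ and the issue evaporates.) So the gap is concrete: you need either an extra structural hypothesis on $f$ (oddness in $u$, or monotonicity that is invariant under $s\mapsto -s$), or a genuinely new ``mirror'' lemma, before the lower half of the sandwich is justified.
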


\subsubsection*{Organization of the paper}

The paper is organized as follows. In Section~\ref{auxiliary:lemmas} we collect several useful results regarding a linearization of~\eqref{model}, regularity, Harnack inequalities, maximum principles, the existence of suitable explicit subsolutions, and some geometric lemmas which are used in the characterization of symmetry. Section~\ref{symmetry:results} is devoted to the proofs of our main symmetry results including Theorems \ref{thm1} and \ref{thm2}. The proofs of Lemmas~\ref{UD1} and~\ref{UD2} can be found in Section~\ref{hypothesis:discussion}.  Finally, in Section~\ref{ex:sec}, we include a series of examples to show some of the different possible behaviors of the elements in the $\omega$-limit set; in particular, we exhibit an $\omega$-limit set with only strictly foliated Schwarz symmetric functions and an $\omega$-limit set with a strictly foliated Schwarz symmetric function and a radially symmetric element.

\section{Auxiliary Lemmas}\label{auxiliary:lemmas}

\subsection{Linearization}\label{linearization}
Let $R \geq 0$ be such that
\begin{align}\label{R}
\Sigma = \R^N\backslash B_R, 
\end{align}
where 
 $B_R=B_R(0) = \{x \in \R^N : |x| <  R\}$ is  the open ball of radius $R$ centered at the origin if $R>0$ and $B_0 := \emptyset$, that is, $\R^N\backslash B_0=\R^N$. 
Using the notation given in the introduction, define $\Sigma_1:=\Sigma(e_1)$, where $e_1=(1,0,\ldots,0)$. 

Throughout this section we assume that $f$ satisfies $(f_1)'$, which follows from $(f_1)$, and $(f_2)$ which follows from $(f_2)'$. 
Let $u$ be a classical solution of~\eqref{model} satisfying $(U_1)$ and $(U_2)$. 

\medskip

Fix $e \in \S$ and let $\Gamma_e:\Sigma_1 \to\Sigma(e)$ be a rotation that maps $e_1$ to $e$. Then, 
\begin{align*}
w_e(x,t):=u(\Gamma_e(x),t)-u(\sigma_e(\Gamma_e(x)),t)\qquad \text{ for $(x,t)\in\overline{\Sigma}_1\times[0,\infty),$ }
\end{align*}
is a classical solution of 
\begin{equation}\label{Pie}
 \begin{aligned}
 \partial_t w_e - \Delta w_e - c^e(x,t)w_e &= 0 \hspace{4.5cm} \text{ in }\Sigma_1\times (0,\infty),\\
 w_e(x,t) &= 0 \hspace{4.5cm} \text{ on } \partial \Sigma_1\times (0,\infty),\\
 w_e(x,0) &= u_0(\Gamma_e(x))-u_0(\sigma_e(\Gamma_e(x))) \qquad \text{ for }x\in \Sigma_1,
   \end{aligned}
\end{equation}
satisfying $(U_1)$ and $(U_2)$ with $u$ replaced by $w_e$,
where 
\begin{align*}
c^e(x,t)&:=\int_0^1 \partial_u f(t,|x|,su(\Gamma_e(x),t)+(1-s)u(\sigma_e(\Gamma_e(x)),t))ds
\end{align*}
is well defined by $(U_2)$ and  $(f_1)'$. Moreover,  since $u$ is uniformly bounded in time and the Lipschitz constant of $f$ is time independent,  for every bounded subset $U\subset \Sigma_1$, there is $\beta_U>0$ (independent of $e$) such that
\begin{align}\label{beta0}
 \sup_{(x,t)\in U\times[0,\infty)} |c^e(x,t)| < \beta_U \,.
\end{align}

Moreover, by $(U_1)$, $(f_1)$, and $(f_2)$ (or $(U_1)$, $(f_1)'$, and $(f_2)'$), there exists $\rho_1\geq \rho$ (see $(f_2)$ or $(f_2)'$ for the definition of $\rho$) such that
\begin{align}\label{rho1}
 \sup_{(x,t)\in (\Sigma_1\setminus B_{\rho_1}(0)) \times[0,\infty)}c^e(x,t)<-\gamma,
\end{align}
where $\gamma$ is given by $(f_2)$ or
\begin{align}\label{gammaf2}
 \gamma:=\max_{
 \substack{
 {r\in J,}
 \\
 {u\in[-M_1,M_1]}
 }}
 |f_u(t,r,u)|+4\frac{\lambda_1}{|J|^2}\qquad \text{if $(f_2)'$ is assumed},
\end{align}
with $M_1$ as in $(U_2)$ and $J$ as in $(f_2)'$.  Observe that $\gamma$ and $\rho_1$ are independent of $e \in \S$.

For the proofs of our main results we need $c^e$ to be negative not only far from the origin, but also near $\partial \Sigma_1$.  This is achieved by a modification of $w_e$ using appropriate subsolutions, as in \cite{bere, polacik,polacik:unbounded}; however, since we only consider radial domains, the proof is simpler and more explicit. We use $\chi_J$ to denote the  characteristic function of an interval $J\subset \R$.

\begin{lemma}\label{T:l}
 For every $e\in \S$, let $w_e$ be a classical solution of~\eqref{Pie} with $c^e$ satisfying~\eqref{beta0} and~\eqref{rho1}. For $\rho_1$ as in \eqref{rho1} and $R > 0$,  
 fix $\delta>0$ such that
 \begin{align}\label{delta:def}
\delta<\min\left\{ \frac{1}{\gamma + \beta_{B_{\rho_1}(0)}}, \frac{R}{8R + 2(N - 1)} \right\},
\end{align}
where $\beta_{B_{\rho_1}(0)}$ is given by~\eqref{beta0}. 
If $R = 0$, we take 
\begin{align}
\delta<\frac{1}{\gamma + \beta_{B_{\rho_1}(0)}} \,.
\end{align}

Then, there are measurable functions $\hat b_i$, $\hat c^{e}$, and a strong solution $\hat w_e$ of 
 \begin{equation}\label{weeq}
 \begin{aligned}
 (\hat w_e)_t - \Delta \hat w_e-\sum_{i=1}^N \hat b_i(x) (\hat w_e)_{x_i} - \hat c^e(x,t) \hat w_e &= 0\qquad \text { in }\Sigma_1\times (0,\infty),\\
 \hat w_e&= 0\qquad \text{ on }\partial \Sigma_1\times (0,\infty),\\
 \end{aligned}
\end{equation}
such that, for each $x\in\Sigma_1$ the sign of $w_e(x)$ is the same as the sign of $\hat w_e(x)$ and  for any subset $Q\subset \Sigma_1\times(0,\infty),$
\begin{align}\label{equivalence}
 \frac{1}{4}||\hat w_e||_{L^\infty(Q)}\leq ||w_e||_{L^\infty(Q)}\leq ||\hat w_e||_{L^\infty(Q)}.
\end{align}
Furthermore, for $x\in\Sigma_1$ and $R$ as in \eqref{R},
\begin{align}\label{bds}
\begin{aligned}
|\hat b_1(x_1)|&\leq 4\chi_{[0,\delta]}(x_1)+4\chi_{[R,R+\delta]}(|x|),\\
|\hat b_i(x_1)|&\leq 4\chi_{[R,R+\delta]}(|x|),\\
|\hat c^e(x)|&\leq |c^e|+\frac{2}{\delta}\chi_{[0,\delta]}(x_1)+8\chi_{[0,\delta]}(x_1)\chi_{[R,R+\delta]}(|x|) + \left(\frac{2}{\delta} + \frac{2(N - 1)}{R} \right)\chi_{[R,R+\delta]}(|x|) ,
\end{aligned}
\end{align}
where all terms containing $R$ are dropped if $R = 0$. 
In addition, 
\begin{align}\label{G}
 \hat c^e<-\gamma \quad \text{ in }\R^N\backslash G,\qquad 
G:=\left\{x\in\Sigma_1\::\:
\operatorname{dist}(x,\partial\Sigma_1)>\delta\text{ and }|x|<\rho_1\right\}.
\end{align}
\end{lemma}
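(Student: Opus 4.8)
The plan is to modify $w_e$ in the two thin shells $\{x_1\le\delta\}$ and $\{R\le|x|\le R+\delta\}$ (the latter only when $R>0$) so that on these shells the zeroth-order coefficient becomes large and negative, while the sign of the solution and its sup-norm up to a factor $4$ are preserved. Concretely, I would set $\hat w_e := \phi\, w_e$ for a fixed, positive, bounded weight $\phi=\phi(x)$ depending only on $x_1$ near $\{x_1=0\}$ and only on $|x|$ near $\partial B_R$, chosen so that $\phi\equiv 1$ away from the two shells, $\tfrac14\le\phi\le 1$ everywhere, and $\phi$ degenerates appropriately so that $\phi^{-1}(-\Delta\phi)$ contributes a large negative term to the coefficient in the shells. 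A convenient choice is $\phi(x)=\psi(x_1)\,\zeta(|x|)$ where $\psi(s)$ interpolates between a value near $\tfrac14$ at $s=0$ and $1$ for $s\ge\delta$ (e.g.\ an affine or quadratic ramp), and similarly $\zeta(r)$ near $r=R$; one then checks $\tfrac14\le\phi\le1$, which gives \eqref{equivalence} and the sign statement immediately, since $\hat w_e$ and $w_e$ differ by a strictly positive factor. Differentiating, $\hat w_e$ solves an equation of the form \eqref{weeq} with
\[
\hat b_i(x) = -2\,\frac{\partial_{x_i}\phi}{\phi},\qquad
\hat c^e(x,t) = c^e(x,t) + \frac{\Delta\phi}{\phi},
\]
and the whole point is to bound these explicitly. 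Since $\partial_{x_i}\phi$ is supported in the two shells with size controlled by $1/\delta$ (and $|\nabla|x||=1$, while $\partial^2_{x_i x_j}|x|$ contributes the $2(N-1)/R$ term from the Laplacian of a radial function near $r=R$), one reads off the bounds \eqref{bds} after fixing the slopes of $\psi,\zeta$; the factors $4$, $2/\delta$, $8$, $2(N-1)/R$ are exactly what such a piecewise-defined $\phi$ with the stated range produces, and the second inequality in \eqref{delta:def}, $\delta< R/(8R+2(N-1))$, is precisely what one needs so that the radial ramp $\zeta$ stays in $[\tfrac14,1]$.

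Next I would verify \eqref{G}. Outside $G$ we are in one of three regions: $|x|\ge\rho_1$, where \eqref{rho1} already gives $c^e<-\gamma$ and where $\phi\equiv1$ (since $\rho_1\ge\rho>R$ and $\delta$ is small), so $\hat c^e=c^e<-\gamma$; or $\operatorname{dist}(x,\partial\Sigma_1)\le\delta$ with $|x|<\rho_1$, i.e.\ either $x_1\le\delta$ or $R\le|x|\le R+\delta$. On these shells the added term $\Delta\phi/\phi$ can be made arbitrarily negative — of size at least, say, $c/\delta^2$ minus the bounded contribution of the cross term — by shrinking $\delta$; but more economically, by choosing $\psi,\zeta$ so that the ramp is strictly convex (hence $\Delta\phi/\phi\le 0$ on the shells and $\le -1/\delta$ on a subshell, or simply arranging $\Delta\phi/\phi < -(\gamma+\beta_{B_{\rho_1}(0)})$ there), together with the bound $|c^e|<\beta_{B_{\rho_1}(0)}$ from \eqref{beta0} valid on the bounded set $B_{\rho_1}(0)$, one gets
\[
\hat c^e = c^e + \frac{\Delta\phi}{\phi} \le \beta_{B_{\rho_1}(0)} - (\gamma+\beta_{B_{\rho_1}(0)}) = -\gamma
\]
on the shells, using the first constraint $\delta<1/(\gamma+\beta_{B_{\rho_1}(0)})$ in \eqref{delta:def} to make the convex ramp steep enough. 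When $R=0$ only the inner shell $\{x_1\le\delta\}$ is present, the radial terms and the constraint $\delta<R/(8R+2(N-1))$ drop out, and the argument is identical but shorter.

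Finally, I would note that $\hat w_e$ is a \emph{strong} (not classical) solution because $\psi,\zeta$ are only Lipschitz, so $\hat b_i,\hat c^e\in L^\infty_{\mathrm{loc}}$; since $w_e$ is a classical solution of the smooth-coefficient equation \eqref{Pie}, the product $\phi w_e$ lies in $W^{2,1}_{p,\mathrm{loc}}$ for every $p$ and satisfies \eqref{weeq} a.e., which is what "strong solution" means here, and the boundary condition $\hat w_e=0$ on $\partial\Sigma_1$ follows from $w_e=0$ there. The main obstacle is purely bookkeeping: pinning down the explicit piecewise weight $\phi$ so that simultaneously (i) its range is in $[\tfrac14,1]$, (ii) $|\nabla\phi/\phi|$ and $|\Delta\phi/\phi|$ obey the exact constants in \eqref{bds}, and (iii) $\Delta\phi/\phi$ is negative enough on the shells to absorb $\beta_{B_{\rho_1}(0)}$ and yield \eqref{G} — and checking that the two smallness constraints on $\delta$ in \eqref{delta:def} are exactly what makes (i)–(iii) compatible. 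There is no analytic difficulty beyond maximum-principle-free direct computation once the weight is chosen; the separate treatment of the $R=0$ case and of the overlap region $\{x_1\le\delta\}\cap\{R\le|x|\le R+\delta\}$ (which accounts for the mixed term $8\chi_{[0,\delta]}(x_1)\chi_{[R,R+\delta]}(|x|)$ in \eqref{bds}) requires a little care but is routine.
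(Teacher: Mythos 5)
Your overall architecture — conjugating $w_e$ by a weight $\phi=\psi(x_1)\zeta(|x|)$ that is smooth, bounded away from zero, nontrivial only near $\{x_1=0\}$ and $\{|x|=R\}$, and whose curvature in those shells produces the large negative contribution to $\hat c^e$ — is exactly the paper's strategy: the paper takes explicit piecewise-quadratic one-variable functions $h(x_1),g(|x|)\in[\tfrac12,1)$ and sets $\hat w_e = w_e/(gh)$. However, your write-up contains a chain of sign confusions that, taken literally, would make the argument fail, and they are worth pointing out precisely because they are exactly the kind of slip this computation is prone to.

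First, you declare $\hat w_e := \phi\,w_e$, but the coefficient formulas you then write, $\hat b_i=-2\partial_{x_i}\phi/\phi$ and $\hat c^e=c^e+\Delta\phi/\phi$, are the ones for the \emph{dividing} change of variables $\hat w_e=w_e/\phi$ (and in that case the drift is $\hat b_i=+2\partial_{x_i}\phi/\phi$). For the multiplicative version the correct zeroth-order coefficient is $\hat c^e=c^e+2|\nabla\phi|^2/\phi^2-\Delta\phi/\phi$, and the extra $2|\nabla\phi|^2/\phi^2\ge0$ term works against you. Second, with your stated formula $\hat c^e=c^e+\Delta\phi/\phi$, making $\hat c^e$ strongly negative on the shells requires $\Delta\phi<0$, i.e.\ a \emph{concave} ramp, not a convex one as you claim; a convex $\psi,\zeta$ gives $\Delta\phi\ge0$, not $\le 0$. (The phrase "$\phi^{-1}(-\Delta\phi)$ contributes a large negative term" has the same sign reversed: in $\hat c^e=c^e+\Delta\phi/\phi=c^e-\bigl(-\Delta\phi/\phi\bigr)$ you want $-\Delta\phi/\phi$ to be large \emph{positive}.) Third, with $\hat w_e=\phi\,w_e$ and $\phi\in[\tfrac14,1]$ one obtains $\|\hat w_e\|\le\|w_e\|\le 4\|\hat w_e\|$, which is the \emph{opposite} of \eqref{equivalence}; the correct reading (again matching your own coefficient formulas) is to \emph{divide} by $\phi\in[\tfrac14,1)$, which gives $\|w_e\|\le\|\hat w_e\|\le 4\|w_e\|$ as required. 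Once all three conventions are aligned — divide by a concave $\phi=hg$ with $h,g\in[\tfrac12,1)$ — the rest of your outline (explicit quadratic ramps, the role of the two constraints in \eqref{delta:def}, the case split $R=0$ vs.\ $R>0$, the mixed term $8\chi_{[0,\delta]}(x_1)\chi_{[R,R+\delta]}(|x|)$ from the cross derivative, and the regularity argument for "strong solution") is sound and matches the paper's proof; one minor point is that outside the shells $\phi$ is a constant $\bigl(\tfrac{\delta+1}{2}\bigr)^2<1$ rather than $\equiv1$, but what matters is only that $\nabla\phi=\Delta\phi=0$ there.
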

\begin{proof}
Let $R\geq 0$ be as in \eqref{R}, $\delta>0$ be as in~\eqref{delta:def}, and recall that $I := [R, \infty)$. 

For $R > 0$ let $h : [0, \infty) \to\R$ and $g:[R,\infty)\to\R$ be given by
\begin{align*}
    h(t)&=\left(-\frac{1}{2\delta}\left(t-\delta\right)^2+\frac{\delta}{2}+\frac{1}{2}\right)\chi_{[0,\delta]}(t)+
    \left(\frac{\delta}{2}+\frac{1}{2}\right)\chi_{(\delta,\infty]}(t),\\
    g(r)&=
    \left(-\frac{1}{2\delta}\left(r-R-\delta\right)^2+\frac{\delta}{2}+\frac{1}{2}\right)\chi_{[R,R+\delta]}(r)
    + \left(\frac{\delta}{2}+\frac{1}{2}\right)\chi_{(R+\delta,\infty]}(r) \,.
\end{align*}
If $R = 0$, we keep the definition of $h$ and set $g = \frac{1}{2}$.  
Observe that $h$ and $g$ are differentiable, non-decreasing, concave,  piecewise $C^2$ functions with
\begin{align}\label{ulbgh}
 \frac{1}{2}\leq g<1\qquad \text{ and }\qquad \frac{1}{2} \leq h < 1\qquad \text{ in }\Sigma_1,
\end{align}
and
\begin{equation}\label{sdb}
\begin{gathered}
|h'(t)| \leq \chi_{[0,\delta]}(t)\,,  \qquad  |g'(r)| \leq  \chi_{[R,R+\delta]}(r)\,, \\ 
 |h''(t)| \leq \frac{1}{\delta} \chi_{[0,\delta]}(t)\,, \qquad 
 |g''(r)| \leq \frac{1}{\delta} \chi_{[R,R+\delta]}(r)\,.
\end{gathered}
\end{equation}

 Then, if $R > 0$  a direct calculation shows that
\begin{align*}
\hat w_e(x,t) := \frac{w_e(x,t)}{g(|x|)h(x_1)},\qquad (x,t)\in \Sigma_1\times(0,\infty),
\end{align*}
belongs to $C^2(\Sigma_1 \setminus S)$, where $S := \{x: x_1 = \delta \textrm{ or } |x| = R + \delta \}$ and $\hat w_e$
satisfies~\eqref{weeq} outside of $\Sigma_1 \setminus S$ with
\begin{align*}
    \hat b_i(x)&=
    \begin{cases}
    2\frac{h'(x_1)}{h(x_1)}+2\frac{g'(|x|)}{g(|x|)}\frac{x_1}{|x|}, &\text{ if }i=1,\vspace{.2cm}\\
    2\frac{g'(|x|)}{g(|x|)}\frac{x_i}{|x|}, &\text{ if }i\neq 1,
    \end{cases}\\
    \hat c^e(x)&=\frac{h''(x_1)}{h(x_1)}+2\frac{h'(x_1)}{h(x_1)}\frac{g'(|x|)}{g(|x|)}\frac{x_1}{|x|}+\frac{g''(|x|)+\frac{N-1}{|x|}g'(|x|)}{g(|x|)}+c^e(x) \,.
\end{align*}
Note that $|x| \geq R > 0$ and denominators do not vanish. If $R = 0$, $\hat w_e$ satisfies~\eqref{weeq} and $\hat b_i$, $\hat c^e$ as above with all terms containing $g'$ or $g''$ removed. In addition, $\hat w_e$ has bounded second derivatives on $\Sigma_1 \setminus S$ and, in particular, $\hat w_e$ belongs to $W^{1, \infty}(\Sigma_1)$. Thus, $\hat w_e$
is a strong solution of~\eqref{weeq}. 

By \eqref{ulbgh}, we have that \eqref{equivalence} holds and, by using \eqref{sdb} and $\frac{|x_i|}{|x|} \leq 1$, we obtain that \eqref{bds} holds. 
Furthermore, since $R + \delta < \rho_1$, $g' = g'' = 0$ on $[\rho_1, \infty)$ and $h$ is concave, it follows that
\begin{equation}
 \hat c^e < - \gamma \qquad \textrm{if } \quad |x| > \rho_1 \,.
\end{equation}
If $x_1 < \delta$ and $|x| < \rho_1$, then by \eqref{delta:def}
\begin{equation}
 \hat c^e \leq -\frac{1}{\delta}+ \left(8 + \frac{2(N - 1)}{R} - \frac{1}{\delta}  \right)\chi_{[R,R+\delta]}(|x|)+\sup\limits_{|x|<\rho_1}|c^e|  < - \gamma \,,
\end{equation}
where, as above, we drop the term with $\chi_{[R,R+\delta]}(|x|)$ if $R = 0$. Finally, in the case $R > 0$, if $x_1  > \delta$ and $|x| \in (R, R + \delta)$, we have, by \eqref{delta:def}, that
\begin{equation}
 \hat c^e \leq  8 + \frac{2(N - 1)}{R} - \frac{1}{\delta}  < - \gamma 
\end{equation}
and~\eqref{G} follows.
\end{proof}

Next, using $(f_2)'$, we construct a suitable subsolution inside $G$. Let $G$ be as in~\eqref{G} and $\gamma$ as in~\eqref{gammaf2}. Let $J = (a,b)$ be the interval given by $(f_2)'$ for $M=M_1$ (with $M_1$ as in $(U_2)$) and for some $R<a<b<\rho$.  Let $e_1=(1,0,\ldots,0)\in\R^N$.

 \begin{lemma}\label{subsolb} 
 Let $0\leq\tau<T\leq\infty$, $r=\frac{|I|}{2}$, $B=B_r(\frac{b+a}{2}e_1)\subset G$, and let $\eta$ denote the positive principal eigenfunction of the Laplacian with Dirichlet boundary conditions on the unit ball $B_1(0)$.  Then $\varphi(x,t):=e^{-\gamma t}\eta(\frac{x-x_0}{r})$ is a (strict) subsolution of~\eqref{weeq} in $B\times(\tau,T)$, namely,
 \begin{align}\label{phi:def}
  \varphi_t<\Delta \varphi+\sum_{i=1}^N \hat b_i\,\varphi_{x_i}+\hat c^e\,\varphi\  \text{ in }B\times(\tau,T),\quad \varphi=0\ \text{ on }\partial B\times(\tau,T) \,,
 \end{align}
 where $\hat b_i$ and $\hat c^e$ are defined in the proof of Lemma \ref{T:l}.
  \end{lemma}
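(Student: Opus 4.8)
The plan is to verify the differential inequality \eqref{phi:def} by a direct computation, exploiting three facts: the explicit form of $\varphi$, the control on $\hat b_i$ and $\hat c^e$ provided by Lemma~\ref{T:l} (in particular \eqref{bds} and \eqref{G}), and the precise choice of $\gamma$ in \eqref{gammaf2}. First I would record that $\eta$ solves $-\Delta \eta = \lambda_1 \eta$ in $B_1(0)$ with $\eta = 0$ on $\partial B_1(0)$ and $\eta>0$ inside, so that with $\varphi(x,t)=e^{-\gamma t}\eta\!\left(\tfrac{x-x_0}{r}\right)$, where $x_0 = \tfrac{b+a}{2}e_1$ and $r = \tfrac{|I|}{2}$, one has $\varphi_t = -\gamma\varphi$ and $\Delta\varphi = -\tfrac{\lambda_1}{r^2}\varphi$. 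Hence \eqref{phi:def} is equivalent to
\begin{align*}
-\gamma < -\frac{\lambda_1}{r^2} + \frac{1}{\varphi}\sum_{i=1}^N \hat b_i\,\varphi_{x_i} + \hat c^e \qquad \text{in } B\times(\tau,T),
\end{align*}
i.e. to showing $\hat c^e + \tfrac{1}{\varphi}\sum_i \hat b_i\varphi_{x_i} < -\gamma + \tfrac{\lambda_1}{r^2}$ on $B$. The key geometric observation is that $B = B_r(x_0) \subset G$, so on $B$ we are away from $\partial\Sigma_1$ and inside $\{|x|<\rho_1\}$; consequently, inspecting \eqref{bds}, the cutoff functions $\chi_{[0,\delta]}(x_1)$ and $\chi_{[R,R+\delta]}(|x|)$ vanish on $B$ (here one uses $R<a$, so $|x|>a-r$ stays above $R+\delta$, and $x_1 > a - r > \delta$). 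Therefore on $B$ we have $\hat b_i \equiv 0$ for all $i$ and $\hat c^e = c^e$, and the drift term disappears entirely.

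With the drift gone, \eqref{phi:def} reduces to the pointwise inequality $c^e(x,t) < -\gamma + \tfrac{\lambda_1}{r^2}$ for $(x,t)\in B\times(\tau,T)$. To obtain this I would estimate $c^e$ from above on $B$ using its definition as an average of $f_u$: since $B\subset \{R<a < |x| < b<\rho\}$ and $|u|\le M_1$ by $(U_2)$, the integrand $\partial_u f(t,|x|,\cdot)$ is evaluated at radii in $J=(a,b)$ and at values in $[-M_1,M_1]$, so
\begin{align*}
c^e(x,t) \leq \max_{\substack{r\in J\\ u\in[-M_1,M_1]}} |f_u(t,r,u)| \qquad \text{for } x\in B,\ t>0.
\end{align*}
Now plug in the definition \eqref{gammaf2} of $\gamma$: the right-hand side above equals $\gamma - 4\tfrac{\lambda_1}{|J|^2}$. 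Since $r = \tfrac{|I|}{2} > \tfrac{|J|}{2}$ (because $J=(a,b)\subset(R,\rho)$ is a proper subinterval of $I=[R,\infty)$, and in any case $|J| = b-a < $ the diameter used — here one should double-check the intended relation $\tfrac{\lambda_1}{r^2} \ge 0$ suffices, but more sharply $4\tfrac{\lambda_1}{|J|^2} \geq \tfrac{\lambda_1}{r^2}$ would need $r \geq |J|/2$, which holds when $B$ has radius at least $|J|/2$), we get $c^e \le \gamma - 4\tfrac{\lambda_1}{|J|^2} \le \gamma - \tfrac{\lambda_1}{r^2} - (\text{something positive})$, wait — let me instead simply note $c^e - \hat c^e$-type bookkeeping is unnecessary and conclude $c^e(x,t) + \tfrac{\lambda_1}{r^2} \le \gamma - 4\tfrac{\lambda_1}{|J|^2} + \tfrac{\lambda_1}{r^2} < \gamma$ provided $\tfrac{\lambda_1}{r^2} < 4\tfrac{\lambda_1}{|J|^2}$, i.e. $r > |J|/2$. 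This gives the strict inequality $c^e < -\gamma + \tfrac{\lambda_1}{r^2}$ after rearranging, which is exactly what \eqref{phi:def} requires, and the boundary condition $\varphi = 0$ on $\partial B$ is immediate from $\eta|_{\partial B_1(0)} = 0$.

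The main obstacle — really the only delicate point — is the bookkeeping around the radius $r$ of the ball $B$ versus the length $|J|$: one must ensure that $B = B_r(x_0)$ is genuinely contained in $G$ (so that all the drift cutoffs vanish and $\hat c^e = c^e$ there, and so that the radii sampled by $c^e$ on $B$ lie in $J$), while simultaneously $r$ is large enough that the Laplacian contribution $\tfrac{\lambda_1}{r^2}$ is dominated by the slack $4\tfrac{\lambda_1}{|J|^2}$ built into $\gamma$. Given $B\subset G$ is assumed in the statement and the factor $4$ (rather than $1$) in \eqref{gammaf2}, there is room to spare; I would just carefully check the chain of inequalities $R < a < b < \rho$, $R+\delta < a$ (from \eqref{delta:def} and $R<a$), $\delta < a$, and $|J| = b - a \le |I|$ so that $r = |I|/2 \ge |J|/2$ — in fact the strict containment $J\subsetneq I$ makes $r > |J|/2$ unless $|I|$ is attained, but since $I=[R,\infty)$ is unbounded while $J$ is bounded, $r = |I|/2 = \infty$ is nonsensical, signalling that $r$ should be read as a fixed finite value chosen once $I$'s relevant portion is fixed; in practice $r$ is chosen so that $B_r(x_0)\subset G$ with $r$ comparable to $|J|$, and the factor $4$ absorbs the discrepancy. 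This is the step to write out with care; everything else is a one-line substitution.
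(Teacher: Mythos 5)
Your plan follows essentially the same route as the paper's proof: compute $\varphi_t$ and $\Delta\varphi$ from the eigenvalue equation, observe that $B\subset G$ forces all the cutoff terms in \eqref{bds} to vanish so that the drift disappears and $\hat c^e = c^e$ on $B$, and then close the estimate on $c^e$ using \eqref{gammaf2}. However, the way you carry out the final step contains a direction error that you should be aware of.

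Dividing \eqref{phi:def} by $\varphi>0$ on $B$ gives $-\gamma < -\tfrac{\lambda_1}{r^2} + c^e$, which rearranges to a \emph{lower} bound $c^e > \tfrac{\lambda_1}{r^2}-\gamma$ — not the upper bound $c^e < -\gamma + \tfrac{\lambda_1}{r^2}$ that you wrote. The estimate you then invoke, $c^e \le \max_{J\times[-M_1,M_1]}|f_u|$, bounds $c^e$ from \emph{above}, which is the wrong side; what is actually needed is $c^e \ge -\max_{J\times[-M_1,M_1]}|f_u|$, which of course also follows from $|c^e|\le \max|f_u|$. Your chain "$c^e + \tfrac{\lambda_1}{r^2} < \gamma$, hence $c^e < -\gamma+\tfrac{\lambda_1}{r^2}$" is a second sign slip: $c^e + \tfrac{\lambda_1}{r^2}<\gamma$ is $c^e<\gamma-\tfrac{\lambda_1}{r^2}$, not $c^e<-\gamma+\tfrac{\lambda_1}{r^2}$. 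With the correct (lower-bound) direction and the exact identity discussed below, the result is a \emph{weak} inequality $\le 0$, not a strict one — so "strict" in the statement is a mild overstatement, also glossed over in the paper, but harmless since only a weak subsolution is needed for the comparison in Proposition~\ref{polaciku}.

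On the radius: you are right that $r = |I|/2$ in the statement is a typo (since $I=[R,\infty)$), but your proposed fix — taking $r>|J|/2$ so that $\tfrac{\lambda_1}{r^2}<\tfrac{4\lambda_1}{|J|^2}$ gives slack for strictness — cannot be the intent. The ball $B$ is fixed in \eqref{B} and in Lemma~\ref{T:l}'s proof to be $B_{(b-a)/2}\bigl(\tfrac{a+b}{2}e_1\bigr)$, i.e. of radius exactly $r=|J|/2$. Then $\tfrac{\lambda_1}{r^2} = \tfrac{4\lambda_1}{|J|^2}$ cancels the $\tfrac{4\lambda_1}{|J|^2}$ built into $\gamma$ in \eqref{gammaf2} exactly, leaving $\tfrac{\lambda_1}{r^2}-\gamma = -\max_{J\times[-M_1,M_1]}|f_u|$, and the needed bound $c^e \ge -\max|f_u|$ holds because $\{|x|: x\in B\} = J$ and $|u|\le M_1$. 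If you enlarged $r$ past $|J|/2$, the set $\{|x|: x\in B\}$ would extend outside $J=(a,b)$, and the bound $|c^e|\le\max_{r\in J}|f_u|$ would no longer apply, so the extra slack you were hoping to exploit is not available. Reading $r=|J|/2$ and accepting the weak inequality closes the argument.
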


 \begin{proof}
 Since $\hat c^e =  c^e$ and $\hat b_i^e = 0$ in $G$ it suffices to verify that 
 \begin{equation}
\varphi_t<\Delta \varphi + c^e\varphi \ \text{ in }B\times(\tau,T),\quad \varphi=0\ \text{ on }\partial B\times(\tau,T).
\end{equation} 
However, by the definition, $\varphi (x) = 0$ on $\partial B$ and, in addition, from~\eqref{gammaf2} (with $J=\{|x|\::\: x\in B\}$) we have that
\begin{equation}
\varphi_t - \Delta \varphi - c^e \varphi = \left(-\gamma + \frac{\lambda_1}{r^2} + c^e\right) \varphi<0 \, \quad \text{ in }B\times(\tau,T).
\end{equation}
\end{proof}

\subsection{Regularity of solutions}\label{regularity}

In this section we show that $u$ is locally equicontinuous.  We extend the proof of \cite[Lemma 3.1]{saldana:2016} to unbounded domains. Fix $\alpha\in(0,1]$ and a domain $\Omega\subset \R^N$. Set $Q:=\Omega\times(\tau,T)$ for $0\leq \tau<T.$ Following \cite[page 4]{QS07} we define $C^{1+\alpha,\frac{1+\alpha}{2}}(Q):=\{f : |f|_{1+\alpha;Q}<\infty \},$ where
\begin{equation}\label{holder:norms}
\begin{aligned}
&[f]_{\alpha;Q}:= \sup\bigg\{ \frac{|f(x,t)-f(y,s)|}{|x-y|^\alpha+|t-s|^\frac{\alpha}{2}}\::\: (x,t),(y,s)\in Q,\ (x,t)\neq (y,s)\bigg\},\\
&|f|_{1+\alpha;Q}:=\sum_{|\beta|\leq 1}\sup_{Q}|D_x^\beta f|+\sum_{|\beta|= 1}[D_x^\beta f]_{\alpha;Q},
 \end{aligned}
 \end{equation}
and $D^\beta_x$ denotes spatial derivatives of order $\beta\in \mathbb N_0^N$.

\begin{lemma}\label{regularity:lemma}
Let $u$ be a classical solution of~\eqref{model} and assume that $(U_1)$, $(U_2)$, $(f_0)$, and $(f_1)$ or $(f_1)'$ hold. If $\tilde{\gamma} =  \frac{1}{N + 3}$, then 
 for every $R_1>R$ there is $\tilde C>0$ satisfying
\begin{align*}
|u|_{1+\tilde\gamma;\overline{B_{R_1}(0)\cap\Sigma}\times[s,s+2]}\leq \tilde C \qquad \text{ for all }s>2.
\end{align*}
\end{lemma}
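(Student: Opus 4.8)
The plan is to derive the claimed interior Hölder bound on $u$ by combining the uniform sup-bound from $(U_2)$, the boundedness of $f$ along the solution (which follows from $(f_0)$ and $(f_1)$ or $(f_1)'$), and standard interior/boundary parabolic $L^p$ and Schauder estimates, exactly as in \cite[Lemma 3.1]{saldana:2016}, the only new point being that the domain is unbounded so one must localize carefully.

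First I would fix $R_1>R$ and a slightly larger radius $R_2>R_1$, and work on the bounded set $\Omega_2:=B_{R_2}(0)\cap\Sigma$. By $(U_2)$ we have $\|u\|_{L^\infty(\Sigma\times(0,\infty))}=M_1<\infty$; by $(f_0)$ with $r=R_2$ we control $f(t,s,0)$ for $s\in[R,R_2]$, and by the local Lipschitz regularity in $(f_1)'$ (which follows from $(f_1)$) applied with $J=[R,R_2]$ and $K=M_1$ we get $|f(t,|x|,u(x,t))|\le |f(t,|x|,0)|+C(M_1,[R,R_2])M_1=:M_2$ for all $(x,t)\in\Omega_2\times(0,\infty)$. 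Thus $u$ solves $u_t-\Delta u=g$ in $\Omega_2\times(0,\infty)$ with $\|g\|_{L^\infty}\le M_2$, and $u=0$ on the portion of $\partial\Sigma$ inside $\Omega_2$ (when $R>0$).

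Next I would apply interior parabolic $L^p$ estimates (for any large $p$) on the parabolic cylinders $(B_{R_1}(0)\cap\Sigma)\times[s,s+2]\subset\subset \Omega_2\times[s-1,s+2]$ for $s>2$, together with boundary $L^p$ estimates near $\partial\Sigma$ using the zero Dirichlet condition and the smoothness of $\partial B_R$; since all constants in these estimates depend only on $R_1,R_2,N,p$ and on $M_1,M_2$ — all of which are independent of $s$ because $M_1,M_2$ are time-independent and the domain is invariant under time translation — the resulting bound on $|u|_{W^{2,1}_p}$ on $(B_{R_1}(0)\cap\Sigma)\times[s,s+2]$ is uniform in $s$. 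Choosing $p$ large enough that $1-\frac{N+2}{p}>\tilde\gamma=\frac{1}{N+3}$, the parabolic Sobolev embedding $W^{2,1}_p\hookrightarrow C^{1+\tilde\gamma,\frac{1+\tilde\gamma}{2}}$ gives exactly $|u|_{1+\tilde\gamma;\overline{B_{R_1}(0)\cap\Sigma}\times[s,s+2]}\le\tilde C$ with $\tilde C$ independent of $s>2$. (One could equally bootstrap directly with Schauder estimates after a first Hölder bound on $g$, but the $L^p$ route needs less regularity on $f$.)

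The main obstacle — and the only place where being in an unbounded domain matters — is making the localization clean near $\partial\Sigma$ when $R>0$: one must use a boundary version of the parabolic estimates on the curved boundary $\partial B_R$, and check that the flattening-the-boundary constants are uniform (they are, since $\partial B_R$ is a fixed smooth compact hypersurface and we only ever look at the fixed annular region $(B_{R_2}(0)\setminus B_R(0))$). Away from $\partial\Sigma$ and at spatial infinity there is nothing subtle: the estimate is purely local in space once $R_1$ is fixed, and the uniformity in $s$ is immediate from time-translation invariance of both the equation's structural bounds and the domain. The shift $s>2$ (rather than $s>0$) is just to have room for the backward-in-time part of the parabolic cylinder and to avoid the initial layer at $t=0$, since the estimates require a cylinder sitting strictly inside the region where $u$ is a classical solution.
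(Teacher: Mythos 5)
Your proof follows essentially the same route as the paper's: bound $|f(t,|x|,u)|$ on the truncated cylinder via $(U_2)$, $(f_0)$, and the local Lipschitz bound from $(f_1)'$, invoke parabolic $L^p$ (Calder\'on--Zygmund) estimates (the paper cites Lieberman's Theorems 7.22/7.30 and interpolation Lemma 7.20) to get a uniform $W^{2,1}_{p}$ bound, and conclude by the parabolic Sobolev embedding $W^{2,1}_p\hookrightarrow C^{1+\alpha,(1+\alpha)/2}$, with the uniformity in $s$ coming from time-translation invariance of all the structural bounds. The only cosmetic differences are that you take $p>N+3$ and then restrict the H\"older exponent, whereas the paper takes $p=N+3$ so that $\alpha=1-\tfrac{N+2}{p}=\tfrac{1}{N+3}=\tilde\gamma$ exactly, and that you spell out the enlarged ball $B_{R_2}$ for the interior localization, which the paper leaves implicit in the citation of Lieberman.
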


\begin{proof}
Let $R\geq 0$ be as in \eqref{R}, $M_1>0$ as in $(U_2)$, $K_{R_1}$ as in $(f_0)$, and $C=C(M_1, [R,R_1])$ as in $(f_1)'$ (or implicitly given by $(f_1)$). Then $|u|\leq M_1$ in $\Sigma\times(0,\infty)$ and 
\begin{align*}
 |f_u(t,r,v)|<C\qquad \text{ for }t>0,\ r\in [R,R_1],\ v\in[-M_1,M_1].
\end{align*}
Furthermore, $f(t,r,0)<K_{R_1}$ for $t>0$ and $r\in[R,R_1]$. Fix $s>2$. Then, $u$ satisfies that
\begin{align*}
 |u_t-\Delta u| &= |f(t,|x|,u)| = \left|\int_0^1 \partial_u f(t,|x|,su)ds\ u+f(t,|x|,0)\right|<M_1C+K_{R_1}=:C_1
\end{align*}
for $(x,t)\in Q:=(B_{R_1}(0)\cap\Sigma)\times(s,s+2)$.  Recall that, if $R>0$, then $u=0$ on $\partial B_R(0)$.  Then, by \cite[Theorem 7.22 or Theorem 7.30]{lieberman}, there is $C_2=C_2(R,C,M_1,K_{R_1},N,R_1)>0$ such that $\|D^2 u\|_{L^{N+3}(Q)}+\|u_t\|_{L^{N+3}(Q)}\leq C_2.$  By a standard interpolation argument (see \cite[Lemma 7.20]{lieberman}), $\|u\|_{W_{N+3}^{2,1}(Q)}\leq C_3$ for some constant $C_3=C_3(R,C,M_1,K_{R_1},N,R_1)>0.$  By Sobolev embeddings (see, for example, \cite[embedding $($1.2$)$]{QS07} and the references therein), we then have that $u\in C^{1+\tilde\gamma,(1+\tilde\gamma)/2}(Q)$ for 
$\tilde\gamma= \frac{1}{N+3}\ \in\ (0,1),$ and there is a constant $C_4=C_4(R,C,M_1,K_{R_1},N,R_1)>0$ such that $|u|_{1+\tilde{\gamma}; Q}\leq C_4 \|u\|_{W_{N+3}^{2,1}(Q)}\leq C_4 C_3=:\tilde C$. 
\end{proof}

From Lemma~\ref{regularity:lemma} and the uniform decay assumption $(U_1)$ it follows that $\omega(u)$ (as defined in~\eqref{omega}) is a nonempty and compact set in $C_0(\Sigma)$ and that~\eqref{omega:unif} holds.

\subsection{Estimates of solutions of linear equations}

In  this subsection we prove bounds for general linear parabolic equations needed for the symmetry results. Fix a domain $\Omega\subset \R^N$ (possibly unbounded), $0\leq \tau<T\leq \infty$, and denote $Q:=\Omega\times(\tau,T)$. 
For $i=1,\ldots,N,$ let $b_i,c:Q\to \R$ be measurable functions such that, for every bounded subdomain $\omega\subset \Omega$, there is $\beta_\omega>0$ such that 
\begin{align}\label{coeff}
|b_{i}|,|c|<\beta_\omega\qquad \text{ in }\omega\times(\tau,T).
\end{align}

We formulate the following Harnack inequality for sign-changing solutions proved in  \cite{polacik}. Let $v^-:=-\min\{0,v\}\geq 0$ and $v^+:=\max\{0,v\}\geq 0$ denote the negative and positive parts of $v$, respectively. We define $\partial_P Q$ to be the parabolic boundary of a cylindrical domain $Q=U\times (\tau,T),$ that is,  
$\partial_P Q = (\partial U \times (\tau,T))\bigcup (\bar{U}\times \{\tau\})$.

\begin{lemma}(\cite[Lemma 3.4]{polacik})  \label{harnack:inequality} 
Let $\Omega$ be a bounded domain. Given $d>0, \theta>0$ there is a positive constant $\kappa$ determined only by $N, \operatorname{diam}(\Omega), \beta_\Omega,d$, and $\theta$ with the following property.  If $D,U$ are domains in $\Omega$ with $D\subset \subset U,$ $\operatorname{dist}(\overline{D}, \partial U)\geq d,$ and $v$ is a 
 solution of
\begin{align*}
 v_t &=\Delta v+ \sum_{i=1}^Nb_i(x,t)v_{x_i} + c(x,t)v \qquad \text{ in }U\times (\tau,\tau+4\theta),
\end{align*}
then 
\begin{align*}
 \inf_{D\times (\tau+3\theta,\tau+4\theta)}v(x,t)\geq \kappa ||v^+||_{L^\infty(D\times(\tau+\theta,\tau+2\theta))}-e^{4m\theta}\sup_{\partial_P(U\times(\tau,\tau+4\theta))}v^-,
\end{align*}
where $m=\sup\limits_{U\times (\tau,\tau+4\theta)}c.$ 
\end{lemma}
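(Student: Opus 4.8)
The plan is to decompose the sign-changing solution $v$ into a nonnegative part, to which a classical interior parabolic Harnack inequality applies, plus a remainder controlled by the negative boundary values. First I would introduce $\hat v$, the (strong/weak) solution of
\[
\hat v_t = \Delta \hat v + \sum_{i=1}^N b_i(x,t)\,\hat v_{x_i} + c(x,t)\,\hat v \quad \text{in } U\times(\tau,\tau+4\theta),
\]
with parabolic boundary data $\hat v = v^-$ on $\partial_P(U\times(\tau,\tau+4\theta))$; its existence and boundedness follow from the standard $L^p$-theory for parabolic equations with bounded measurable coefficients, using \eqref{coeff} with $\omega=\Omega$ (note $U\subset\Omega$) and the continuity of $v$ up to the parabolic boundary. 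Since $v^-\geq 0$, the maximum principle gives $\hat v\geq 0$. Then $\tilde v:=v+\hat v$ solves the same linear equation with parabolic boundary data $v+v^- = v^+\geq 0$, so the maximum principle gives $\tilde v\geq 0$ in $U\times(\tau,\tau+4\theta)$; combined with $\hat v\geq 0$ this yields $\tilde v\geq v^+$ pointwise.

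Next I would apply the parabolic Harnack inequality for \emph{nonnegative} solutions (of Krylov--Safonov/Moser type) to $\tilde v$. Because $D\subset\subset U$ with $\operatorname{dist}(\overline D,\partial U)\geq d$, one can cover $D$ by finitely many space--time cylinders of a fixed size determined by $d$ and $\theta$, whose number is controlled by $N$, $\operatorname{diam}(\Omega)$, and $d$, and then chain the elementary local estimates along a connecting sequence in time; the zero-order term is absorbed by the substitution $e^{-mt}\tilde v$, or is simply carried in the Krylov--Safonov constant, which depends only on $N$, the ellipticity (here $1$), and the coefficient bound $\beta_\Omega$. This produces a constant $\kappa=\kappa\bigl(N,\operatorname{diam}(\Omega),\beta_\Omega,d,\theta\bigr)>0$ with
\[
\inf_{D\times(\tau+3\theta,\tau+4\theta)}\tilde v \ \geq\ \kappa\,\|\tilde v\|_{L^\infty(D\times(\tau+\theta,\tau+2\theta))}\ \geq\ \kappa\,\|v^+\|_{L^\infty(D\times(\tau+\theta,\tau+2\theta))},
\]
the last inequality using $\tilde v\geq v^+$.

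It remains to bound $\hat v$ from above. Since $c\leq m$ on $U\times(\tau,\tau+4\theta)$, the function $(x,t)\mapsto e^{m(t-\tau)}\sup_{\partial_P(U\times(\tau,\tau+4\theta))}v^-$ is a supersolution of the equation satisfied by $\hat v$ and dominates $v^-$ on the parabolic boundary, so by comparison
\[
\hat v(x,t)\ \leq\ e^{m(t-\tau)}\sup_{\partial_P(U\times(\tau,\tau+4\theta))}v^-\ \leq\ e^{4m\theta}\sup_{\partial_P(U\times(\tau,\tau+4\theta))}v^-
\]
throughout the cylinder. Writing $v=\tilde v-\hat v$ and combining the two previous displays gives
\[
\inf_{D\times(\tau+3\theta,\tau+4\theta)}v \ \geq\ \kappa\,\|v^+\|_{L^\infty(D\times(\tau+\theta,\tau+2\theta))}-e^{4m\theta}\sup_{\partial_P(U\times(\tau,\tau+4\theta))}v^-,
\]
which is the claimed inequality.

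The main obstacle is the second step: exhibiting a Harnack constant $\kappa$ that depends \emph{only} on $N,\operatorname{diam}(\Omega),\beta_\Omega,d,\theta$ and is uniform over all admissible pairs $(D,U)$. The care lies in running the chaining argument on cylinders whose geometry is fixed by $d$ and $\theta$ alone, in keeping the number of chaining steps controlled by $\operatorname{diam}(\Omega)$ and $d$, and in tracking how the elementary Krylov--Safonov estimate depends on the coefficient bound $\beta_\Omega$; by contrast, the existence theory for $\hat v$ with bounded measurable coefficients and the comparison argument for $\hat v$ are routine. Since the statement is exactly \cite[Lemma 3.4]{polacik}, one may alternatively simply invoke that reference.
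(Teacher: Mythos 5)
The paper itself does not prove this lemma --- it quotes it from Pol\'{a}\v{c}ik's paper \cite[Lemma 3.4]{polacik} --- so there is no ``paper proof'' to compare against. Your decomposition $v=\tilde v-\hat v$, applying the interior parabolic Harnack inequality to the nonnegative solution $\tilde v$ and controlling $\hat v$ by a comparison argument, is the natural and standard way to obtain a Harnack-type lower bound for sign-changing solutions, and it is essentially the strategy used in that reference; the elementary verification that $\hat v\geq 0$ and $\tilde v\geq v^+$, and the chaining/covering argument making $\kappa$ depend only on $N,\operatorname{diam}(\Omega),\beta_\Omega,d,\theta$, are all sound as you present them.

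However, there is a genuine gap in the comparison step for $\hat v$, and it matters precisely in the regime in which the paper applies the lemma. You claim that $\psi(x,t):=e^{m(t-\tau)}\sup_{\partial_P}v^-$ dominates the boundary data $v^-$ on $\partial_P(U\times(\tau,\tau+4\theta))$, and then that $e^{m(t-\tau)}\le e^{4m\theta}$. Both assertions use $m\ge 0$. If $m=\sup c<0$ (which is exactly the situation in Proposition~\ref{polaciku}, where $\hat c^e\le-\gamma<0$), then for $t>\tau$ we have $e^{m(t-\tau)}<1$, so $\psi$ need \emph{not} dominate $v^-$ on the lateral boundary $\partial U\times(\tau,\tau+4\theta)$ (we only know $v^-\le\sup_{\partial_P}v^-$ there, not $v^-\le e^{m(t-\tau)}\sup_{\partial_P}v^-$); and independently, for $t-\tau<4\theta$ and $m<0$ one has $e^{m(t-\tau)}>e^{4m\theta}$, so the final inequality $\hat v\le e^{4m\theta}\sup_{\partial_P}v^-$ does not follow. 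The robust version of your argument uses the constant supersolution $\psi\equiv\sup_{\partial_P}v^-$, which is admissible for any sign of $m$ because $-c\psi\ge -m\psi\ge 0$ when $m\le 0$ and $(m-c)\psi\geq 0$ in general; this gives $\hat v\le\sup_{\partial_P}v^-$ throughout, i.e.\ the estimate with $e^{4m\theta}$ replaced by $e^{4m^+\theta}$ where $m^+=\max(m,0)$. Obtaining the sharper factor $e^{4m\theta}$ for $m<0$ cannot come out of your comparison step as written, and indeed the stated inequality with $m=\sup c<0$ requires extra structure: consider $v(x,t)=-Ce^{-(t-\tau)}$, a solution with $c\equiv -1$, for which $v^+\equiv 0$, $\sup_{\partial_P}v^-=C$, but $\inf_{D\times(\tau+3\theta,\tau+4\theta)}v=-Ce^{-3\theta}<-Ce^{-4\theta}$. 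You should therefore either (i) state and prove the lemma with $m^+$ in the exponent, which your approach handles cleanly and which suffices to carry out the maximum-principle improvements used later via the separate decay estimate \eqref{obs1}, or (ii) simply invoke \cite[Lemma 3.4]{polacik} as the paper does and check there exactly which quantity appears in the exponent.
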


The next lemma is the weak maximum principle in unbounded domains, see, for example, \cite[Lemma~2.1]{polacik:unbounded}, \cite{lieberman,protter}. 

\begin{lemma}\label{weakmp} Let $U\subset \R^N$ be a (possibly unbounded) domain and let $v$ be such that 
\begin{align*}
 v_t \geq \Delta v+\sum_{i=1}^N b_i(x,t)v_{x_i} + c(x,t)v\quad \text{ in }U\times(0,\infty)
\end{align*}
with
\begin{align}\label{m:def}
 m:=\sup\limits_{U\times(0,\infty)}c(x,t)<\infty
\end{align}
and
 \begin{align}\label{mpd}
  \lim_{|x|\to \infty, x \in U} v(x,t)=0\qquad \text{ for all }t>0.
 \end{align}
Then, for each $(x,t)\in U\times(0,\infty)$ one has
\begin{align*}
e^{-mt} v^{-}(x,t)\leq \sup_{(y,s)\in \partial_p (U\times(0,\infty))} e^{-ms} v^{-}(y,s).
\end{align*}
\end{lemma}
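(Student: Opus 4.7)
The plan is to follow the classical strategy for weak maximum principles in unbounded domains: first rescale in time to absorb the upper bound $m$ on the coefficient $c$, then introduce a small perturbation that produces a \emph{strict} differential inequality, and finally derive a contradiction from an interior positive maximum. Put
\[
w(x,t):=-e^{-mt}v(x,t),
\]
so that $w^+(x,t)=e^{-mt}v^-(x,t)$ and the conclusion of the lemma is equivalent to showing $w\le M$ on $U\times(0,\infty)$, where $M:=\sup_{\partial_P(U\times(0,\infty))}w^+\in[0,\infty]$. A direct computation using the hypothesis on $v$ together with $c\le m$ gives
\[
w_t-\Delta w-\sum_{i=1}^N b_i\,w_{x_i}-\tilde c\,w\le 0\quad\text{on }U\times(0,\infty),\qquad \tilde c:=c-m\le 0.
\]
We may assume $M<\infty$, otherwise there is nothing to prove.

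Fix $T>0$; it suffices to prove $w\le M$ on $U\times(0,T]$ and then let $T\to\infty$. For $\delta>0$ consider the auxiliary function
\[
\phi(x,t):=w(x,t)-M-\delta e^{t}.
\]
On the parabolic boundary of $U\times(0,T)$ one has $w\le w^+\le M$, so $\phi\le-\delta<0$ there; moreover, by \eqref{mpd}, $\phi(x,t)\to -M-\delta e^{t}<0$ as $|x|\to\infty$ for every fixed $t\in[0,T]$. Wherever $\phi>0$ we have $w>M\ge 0$, hence $\tilde c\,w\le 0$, and therefore
\[
\phi_t-\Delta \phi-\sum_{i=1}^N b_i\,\phi_{x_i}\le \tilde c\, w-\delta e^{t}\le -\delta e^{t}<0.
\]
If $\phi$ were positive somewhere in $U\times(0,T]$, the strict negativity of $\phi$ on $\partial_P(U\times(0,T))$ and at spatial infinity, combined with continuity, would force $\phi$ to attain a positive supremum at an interior point $(x_*,t_*)\in U\times(0,T]$; but at such a point $\phi_t(x_*,t_*)\ge 0$, $\nabla \phi(x_*,t_*)=0$, and $\Delta \phi(x_*,t_*)\le 0$, contradicting the strict inequality just displayed. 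Hence $\phi\le 0$ on $U\times(0,T]$, and letting $\delta\to 0$ yields $w\le M$, as desired.

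The main obstacle is the rigorous justification of the last step, since $U$ is unbounded and the decay in \eqref{mpd} is only pointwise in $t$, so a positive supremum of $\phi$ need not a priori be attained. I would handle this by a standard exhaustion argument: restrict $\phi$ to the bounded cylinders $(U\cap B_R(0))\times(0,T]$, apply the classical weak maximum principle of Protter--Weinberger on each, and pass to the limit $R\to\infty$ using the local boundedness \eqref{coeff} of $b_i,c$ together with the local parabolic regularity of $w$ to control the contribution from $(U\cap \partial B_R)\times(0,T]$. This is precisely the argument in \cite[Lemma 2.1]{polacik:unbounded} to which the statement refers, with analogous treatments in \cite{lieberman,protter}.
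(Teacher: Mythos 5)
The paper gives no proof of Lemma~\ref{weakmp}; it is stated as a known result with citations to \cite[Lemma~2.1]{polacik:unbounded}, \cite{lieberman,protter}, and the remark that \eqref{mpd} can be weakened as in \cite[Proposition~52.4]{QS07}. So the relevant comparison is with the standard argument those references contain, which is what you are reconstructing.

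Your core reduction is correct and matches the standard route: setting $w=-e^{-mt}v$ makes the zeroth-order coefficient $\tilde c=c-m\le 0$; the auxiliary function $\phi=w-M-\delta e^{t}$ satisfies a strict differential inequality on $\{\phi>0\}$; $\phi$ is strictly negative on the parabolic boundary; and a positive interior maximum would contradict the strict inequality via $\phi_t\ge0$, $\nabla\phi=0$, $\Delta\phi\le0$. You also correctly flag that the only nontrivial point in an unbounded domain is whether the positive supremum of $\phi$ is actually attained.

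The weak part is your proposed repair of that point. The exhaustion over $(U\cap B_R)\times(0,T]$ does not close the gap as stated: the Protter--Weinberger maximum principle on each truncated cylinder bounds $\phi$ by its supremum over the \emph{entire} parabolic boundary, which includes $(U\cap\partial B_R)\times(0,T]$, and ``local boundedness of $b_i,c$'' together with ``local parabolic regularity'' give no mechanism for making this lateral boundary contribution vanish as $R\to\infty$; under a literal pointwise-in-$t$ reading of \eqref{mpd} it need not vanish at all. The resolution used in the cited references is that the decay at spatial infinity is to be understood as locally uniform in $t$ (this is how the hypothesis is formulated in \cite[Lemma~2.1]{polacik:unbounded}). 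With that reading no exhaustion is needed: if $(x_n,t_n)\in U\times(0,T]$ is a maximizing sequence for $\phi$ with $\phi(x_n,t_n)\to S>0$ and $|x_n|\to\infty$, then after passing to a subsequence $t_n\to t_*\in[0,T]$ the locally uniform decay gives $w(x_n,t_n)\to0$, hence $\phi(x_n,t_n)\to -M-\delta e^{t_*}<0$, a contradiction; so the supremum is attained at an interior point and your maximum-principle contradiction applies directly. In short: replace the exhaustion paragraph by this compactness argument (stated under the locally-uniform-in-$t$ reading of \eqref{mpd}), and the proof is complete.
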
 
Note that assumption \eqref{mpd} can be weakened, see for example \cite[Proposition~52.4]{QS07}.

\subsection{Geometric lemmas}\label{geometric:lemmas}

We use the following geometric characterization of foliated Schwarz symmetry proved in \cite{saldana-weth}. Recall that, for $e\in \S$, $\Sigma(e):=\{x\in \Sigma: x\cdot e>0\}$ and $\sigma_e(x):=x-2(x\cdot e)e$.

\begin{prop}\cite[Proposition 3.3]{saldana-weth} \label{sec:char-foli-schw-1}
Let $\cal U$ be a set of continuous functions defined on a radial domain $\Sigma\subset
 \mathbb R^N,$ $N\geq 2$ such that 
\begin{align*}
\cM:=\{e\in \S \mid z(x) \ge z(\sigma_e(x)) \text{ for all } x\in
 \Sigma(e) \text{ and } z \in {\cal U}\} \neq \emptyset \,.
\end{align*}
Fix $\tilde e\in \cM$ and assume that  for any two dimensional subspaces $P\subseteq
 \mathbb R^N $ containing $\tilde e$ there exist 
 $p_1 \neq \ p_2$ in a connected component of $\cM \cap P$ such that
 $z \equiv z \circ \sigma_{p_1}$ and $z \equiv z \circ
 \sigma_{p_2}$ for every $z \in \cal U$. Then, there exists $p
 \in \S$ such that every $z \in \cal U$ is foliated Schwarz symmetric with
 respect to $p$.
\end{prop}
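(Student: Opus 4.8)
I would reduce the claim to showing that the set $\cM = \bigcap_{z \in {\cal U}} \cM_z$, where $\cM_z := \{e \in \S : z \geq z \circ \sigma_e \text{ on } \Sigma(e)\}$, contains a closed hemisphere $\{e \in \S : e \cdot p \geq 0\}$ for some $p \in \S$. Indeed, by the standard geometric characterization of foliated Schwarz symmetry (see \cite{brock}), a continuous function $z$ on the radial domain $\Sigma$ is foliated Schwarz symmetric with respect to $p$ if and only if $z \geq z \circ \sigma_e$ on $\Sigma(e)$ for every $e \in \S$ with $e \cdot p \geq 0$, that is, if and only if this hemisphere is contained in $\cM_z$. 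Hence it is enough to produce such a $p$ for $\cM$.

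Two elementary facts come first. First, \emph{$\cM$ is closed}: each of its defining inequalities is a closed condition on $e$, by continuity of the functions $z \in {\cal U}$ and of the map $(e,x) \mapsto \sigma_e(x)$. Second, an \emph{invariance property}: if $e' \in \S$ satisfies $z \equiv z \circ \sigma_{e'}$ for \emph{every} $z \in {\cal U}$, then $\cM$ is invariant under the map $e \mapsto \sigma_{e'}(e)$. This is a short conjugation computation: using $\sigma_{\sigma_{e'}(e)} = \sigma_{e'} \sigma_e \sigma_{e'}$, $\Sigma(\sigma_{e'}(e)) = \sigma_{e'}(\Sigma(e))$, and $z \circ \sigma_{e'} = z$, the inequality expressing $\sigma_{e'}(e) \in \cM_z$ is converted back into the one expressing $e \in \cM_z$.

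Next I would study $\cM$ on each great circle through $\tilde e$. Fix a two-dimensional subspace $P \ni \tilde e$ and put $C_P := P \cap \S$. By hypothesis there are distinct $p_1, p_2$ in one connected component $A$ of $\cM \cap C_P$ with $z \equiv z \circ \sigma_{p_i}$ for all $z \in {\cal U}$; since $A$ is a connected subset of a circle containing two points, it is a nondegenerate closed arc, and the sub-arc of $A$ joining $p_1$ and $p_2$ lies in $\cM$. If $p_2 = -p_1$, then $A$ contains a pair of antipodes and, being invariant under the reflection $\sigma_{p_1}|_{C_P}$ (by the invariance property), is a closed arc symmetric about a fixed point of that reflection, hence of length $\geq \pi$; so $\cM \cap C_P$ contains a closed half-circle. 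If $p_2 \neq -p_1$, then $\langle \sigma_{p_1}, \sigma_{p_2} \rangle$ acts on $C_P$ as a dihedral group with nontrivial rotation part and leaves $\cM \cap C_P$ invariant; the crucial point is that the positive-length arc of directions $e$ satisfying $z \geq z \circ \sigma_e$ found here forces every $z$ to be $O(2)$-invariant on $P$. To see this, write points of $\R^N$ as $(y,w)$ with $y \in P$ and $w \in P^\perp$: for each fixed $w$ the function $y \mapsto z(y,w)$ is invariant under the induced dihedral action on $P$, and the reflection inequalities force its maximum over each circle $\{|y| = s\}$ to lie in a cone of $P$ of opening angle $< \pi$; this is compatible with a rotational symmetry of order $\geq 2$ only if $z(\cdot,w)$ is constant on the circles of $P$. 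Thus $z$ is $O(2)$-invariant on $P$ and $\cM \cap C_P = C_P$. In either case, $\cM \cap C_P$ contains a closed half-circle of $C_P$, for every $P \ni \tilde e$.

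Finally I would patch these two-dimensional conclusions together. At this stage $\cM$ is closed, $\tilde e \in \cM$, and $\cM \cap C$ contains a closed half-circle of $C$ for \emph{every} great circle $C$ through $\tilde e$. A spherical-geometry and separation argument --- parametrizing $\S \setminus \{\pm \tilde e\}$ by the angular distance to $\tilde e$ together with a direction in $\tilde e^\perp$, and separating the convex hull of the open set $\S \setminus \cM$ from the origin by a hyperplane --- then yields $p \in \S$ with $\{e \in \S : e \cdot p \geq 0\} \subseteq \cM$, as in \cite{brock, saldana-weth}. By the first paragraph, every $z \in {\cal U}$ is foliated Schwarz symmetric with respect to this $p$, which is the conclusion. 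I expect the two genuinely non-routine ingredients to be the ``maximum over circles'' argument excluding a nontrivial dihedral symmetry when $p_2 \neq -p_1$, and the concluding patching step --- which is precisely where one needs the hypothesis for \emph{all} two-planes through $\tilde e$ and not merely a single one.
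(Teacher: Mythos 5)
The paper does not prove this proposition; it quotes it from \cite[Proposition~3.3]{saldana-weth}, so I am judging your sketch on its own terms. Your overall plan --- reduce to showing that $\cM$ contains a closed hemisphere (via Brock's reflection characterization of foliated Schwarz symmetry), note that $\cM$ is closed, establish the invariance of $\cM$ under $\sigma_{e'}$ whenever every $z$ is $\sigma_{e'}$-symmetric, and then analyse $\cM$ on each great circle through $\tilde e$ --- is a sensible one and lines up with the one-dimensional lemmas recorded in Section~2.4 of the present paper. The conjugation computation in your invariance step is correct, as is the observation that a connected component containing a pair of antipodes is an arc of length $\geq \pi$.

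There are, however, two gaps. The lesser one is in your dihedral case $p_2 \neq -p_1$. The reflection inequality $e \in \cM_z$ only gives that \emph{some} maximizer of $z(\cdot,w)$ on each circle in $P$ lies in the closed half-plane $\{y \cdot e \geq 0\}$; it does not put the whole argmax set inside $\bigcap_{e}\{y\cdot e \geq 0\}$, so the claim that ``the maximum lies in a cone of opening $<\pi$'' is not established. The intended conclusion ($z$ is $O(2)$-invariant on $P$) is nevertheless correct, and is most cleanly obtained from Lemma~\ref{lemma1} of this paper: if the reflection inequality were ever strict, that lemma would force the set of reflection-symmetry angles of $v$ to equal $\pi\mathbb{Z}$, contradicting $\alpha_2 \not\equiv \alpha_1 \pmod\pi$; so the inequality is an identity on a positive-length interval of axes, and $v$ is constant.

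The serious gap is in the patching step. The pure spherical-geometry statement you reduce to --- ``$\cM$ closed, $\tilde e\in\cM$, and $\cM\cap C$ contains a closed half-circle for every great circle $C$ through $\tilde e$ $\Rightarrow$ $\cM$ contains a closed hemisphere'' --- is \emph{false}. Take $N=3$, $\tilde e$ the north pole, and let $\cM$ be the complement of small open caps around
\[
q_1=(1,0,0),\quad q_2=\Bigl(-\tfrac12,\tfrac{\sqrt3}{2},0\Bigr),\quad q_3=\Bigl(-\tfrac12,-\tfrac{\sqrt3}{2},0\Bigr),\quad q_4=(0,0,-1).
\]
For caps small enough, each meridian through $\tilde e$ meets the polar cap around $q_4$ and at most one equatorial cap, and these two small arcs always fit inside an open half-circle of the meridian; thus $\cM\cap C$ contains a closed half-circle of $C$ (even one through $\tilde e$) for every such $C$. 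But $q_1+q_2+q_3=0$, so no $p$ satisfies $p\cdot q_i<0$ for all $i$, hence no closed hemisphere $\{e\cdot p\geq 0\}$ avoids the caps and $\cM$ contains no hemisphere: the convex hull of $\S\setminus\cM$ contains the origin and cannot be separated from it. Consequently, the separation argument you invoke cannot close the proof at this level of generality. You must carry more structure into the final step --- for instance, that the half-circle of $\cM\cap C_P$ is delimited by the axes $p_1,-p_1$ along which every $z$ is reflection-symmetric and that $\cM$ is invariant under $\sigma_{p_1}$ --- and explain how this extra rigidity forces the half-circles from different planes $P$ to cohere into a single hemisphere; this is precisely what the cited reference does and what your sketch leaves out.
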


Next, we show the following lemma, which is used to show Theorem \ref{theorem:unbounded}.

\begin{lemma}\label{lemma2.5}
 Let $U\subset \S$ be an open set, $\Sigma\subset \mathbb R^N$ a radial domain and $z\in C(\Sigma).$  If 
\begin{align}
 z(x)&\geq z(\sigma_e(x)) \hspace{1cm} \text{ for all } x\in \Sigma(e)\text{ and } e\in U,\label{h1}\\
 z&\equiv z\circ \sigma_{p} \hspace{1.3cm} \text{ for some } p\in U,\label{h2}
\end{align}
then $z$ is radially symmetric.
\end{lemma}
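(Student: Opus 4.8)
\textbf{Proof plan for Lemma \ref{lemma2.5}.}

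The plan is to show that $z$ is invariant under the reflection $\sigma_e$ for \emph{every} $e \in \S$, since a continuous function on a radial domain that is symmetric with respect to all hyperplanes through the origin is radially symmetric. First I would introduce the set $\cM := \{e \in \S : z \equiv z \circ \sigma_e\}$ of reflection symmetries of $z$; by hypothesis \eqref{h2}, $p \in \cM$, so $\cM \neq \emptyset$, and by continuity of $z$ and of $e \mapsto \sigma_e$, the set $\cM$ is closed in $\S$. The goal is then to prove $\cM = \S$.

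The key observation is that \eqref{h1} forces a local rigidity around $p$: I would show that $p$ is an interior point of $\cM$ relative to $\S$. Indeed, fix any $e \in U$ close enough to $p$ so that $\sigma_p(e)$ also lies in $U$ (possible since $U$ is open and $\sigma_p$ is continuous with $\sigma_p(p) = p$). Applying \eqref{h1} with the direction $e$ gives $z(x) \geq z(\sigma_e(x))$ for $x \in \Sigma(e)$. Now I compose with the symmetry $\sigma_p$: using \eqref{h2} we have $z = z \circ \sigma_p$, and since $\sigma_p$ maps $\Sigma(e)$ onto $\Sigma(\sigma_p(e))$ and conjugates $\sigma_e$ into $\sigma_{\sigma_p(e)}$ (because $\sigma_p \sigma_e \sigma_p = \sigma_{\sigma_p(e)}$), the inequality $z(x) \geq z(\sigma_e(x))$ on $\Sigma(e)$ is equivalent, after the change of variables $x \mapsto \sigma_p(x)$, to $z(y) \geq z(\sigma_{\sigma_p(e)}(y))$ on $\Sigma(\sigma_p(e))$. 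But $\sigma_p(e) \in U$ as well, so \eqref{h1} \emph{also} gives the reverse inequality $z(y) \geq z(\sigma_{\sigma_p(e)}(y))$ directly; rewriting this last one back in terms of $e$ via $\sigma_p$ yields $z(\sigma_e(x)) \geq z(x)$ on $\Sigma(e)$. Combining, $z \equiv z \circ \sigma_e$, i.e. $e \in \cM$. Hence $\cM$ contains a neighborhood of $p$ in $\S$, so $\cM$ has nonempty interior.

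It remains to bootstrap from an open-and-closed-looking situation to all of $\S$. Here I would use that the group generated by reflections, or rather the following direct argument: the set $\cM$ is closed, and I claim it is also open, hence equals $\S$ by connectedness (for $N \geq 2$, $\S$ is connected). Openness at an arbitrary $e_0 \in \cM$ is obtained by the same conjugation trick but now using $e_0$ in place of $p$: if $e_0 \in \cM$ and $e \in U$ with $\sigma_{e_0}(e) \in U$, the argument above (with $p$ replaced by $e_0$) shows $e \in \cM$ — however this only covers directions $e$ lying in $U$, so a little more care is needed. The cleaner route, which I would actually carry out, is: first prove $p$ is interior to $\cM$ as above, then observe that for $e$ in the interior of $\cM$, $\sigma_e$ maps $\cM$ to $\cM$ (since $\sigma_e \sigma_{e'} \sigma_e = \sigma_{\sigma_e(e')}$ and $z = z \circ \sigma_e = z \circ \sigma_{e'}$ give $z = z \circ \sigma_{\sigma_e(e')}$); thus $\cM$ is invariant under the reflections $\sigma_e$ for all $e$ in an open subset of $\S$. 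A standard argument then shows the subgroup of $O(N)$ generated by $\{\sigma_e : e \in \text{int}\,\cM\}$ acts transitively on $\S$, forcing $\cM = \S$. I expect the main obstacle to be this final transitivity/propagation step — making rigorous that an open set of reflection symmetries, together with closedness of $\cM$, propagates to \emph{all} hyperplanes through the origin; the conjugation identity $\sigma_p \sigma_e \sigma_p = \sigma_{\sigma_p(e)}$ and the connectedness of $\S$ are the essential inputs, and one must track carefully which directions $e$ the hypothesis \eqref{h1} is available for versus which ones are produced only as conclusions.
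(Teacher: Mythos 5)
Your overall strategy is workable, but the key ``interior point'' step contains a sign error that, as written, makes the argument break down, and the final transitivity step is left as a sketch of a nontrivial fact.

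The sign error: you write that $\sigma_p$ is ``continuous with $\sigma_p(p)=p$,'' but with the paper's convention $\sigma_p(x)=x-2(x\cdot p)p$ one has $\sigma_p(p)=-p$. Consequently, for $e$ near $p$, $\sigma_p(e)$ is near $-p$ and need \emph{not} lie in $U$. This propagates into the next sentence: you claim that applying \eqref{h1} to $\sigma_p(e)$ ``gives the reverse inequality,'' but the inequality you obtain after conjugating by $\sigma_p$ is $z(x)\geq z(\sigma_e(x))$ on $\Sigma(e)$ --- i.e.\ exactly the inequality you started with, not its reverse. The correct version of the idea is to use $e':=-\sigma_p(e)$, which \emph{is} near $p$ and hence in $U$ for $e$ close to $p$; note $\sigma_{e'}=\sigma_{\sigma_p(e)}$ but $\Sigma(e')$ is the \emph{complementary} half-space. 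Applying \eqref{h1} to $e'$, changing variables via $\sigma_p$, and using $z=z\circ\sigma_p$ yields $z\geq z\circ\sigma_e$ on $\Sigma(-e)$; together with \eqref{h1} for $e$ (which gives it on $\Sigma(e)$) this forces equality, so $z\equiv z\circ\sigma_e$. With this correction, the conclusion that $p$ is interior to $\cM$ does hold, but the argument you wrote down does not prove it. Separately, your final step --- that the group generated by reflections $\sigma_e$ for $e$ in an open subset of $\S$ acts transitively on $\S$, together with closedness of $\cM$ --- is correct in spirit but is exactly the part you would still need to prove; it requires showing the generated subgroup of $O(N)$ contains a neighborhood of the identity in $SO(N)$ (or an equivalent connectedness argument), and is not an off-the-shelf lemma.

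For comparison, the paper avoids all of this group theory by a 2D reduction: for an arbitrary two-dimensional plane $P$ through $p$, it passes to polar coordinates and considers, for each fixed radius and orthogonal coordinate, the one-variable even $2\pi$-periodic function $v(\varphi)=z(r\cos\varphi,r\sin\varphi,x')$. Hypotheses \eqref{h1}--\eqref{h2} translate to $v(\varphi)=v(-\varphi)$ plus local reflectional inequalities $v(\eta+\varphi)\geq v(\eta-\varphi)$ for $\eta$ in a small interval around $0$. The one-dimensional Lemma~\ref{lemma2} then upgrades the local inequalities to global ones for $\eta\in(0,\pi)$; applying it once more for $\eta<0$ gives the opposite inequality, forcing $v$ to be constant. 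Since $r$, $x'$, and $P$ were arbitrary, $z\equiv z\circ\sigma_q$ for every $q\in\S$. This plane-by-plane reduction to the explicit Lemma~\ref{lemma2} is what makes the paper's argument self-contained and elementary, and it is the step your sketch replaces by the transitivity claim.
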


The proof of Lemma \ref{lemma2.5} is based on the following results. 

\begin{lemma}\cite[Lemma 3.1]{saldana-weth} \label{lemma1}
 Let $v\in C(\mathbb R)$ be an even and $2\pi$-periodic function, 
and denote $\cR := \{\eta \in \mathbb{R} : v(2\eta - \phi) = v (\phi) \textrm{ for each } \phi \in \mathbb{R}\}$  the set of  points of reflectional symmetry of
$v$. If for some $\eta \in \R$,  
 \begin{equation}
\label{geo4}     
   \begin{aligned}
  v(\eta + \varphi)&\geq v(\eta -\varphi)\ \  \ \text{ for all } \varphi\in
  [0,\pi] \text{ and }\\
  v(\eta + \varphi_0)&> v(\eta -\varphi_0)\ \  \text{ for some }\varphi_0\in(0,\pi),
   \end{aligned}
 \end{equation}
then  $\cR= \{n \pi \::\: n \in \mathbb{Z}\}$. 
\end{lemma}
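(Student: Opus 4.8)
The plan is to prove the two inclusions $\{n\pi:n\in\mathbb Z\}\subseteq\cR$ and $\cR\subseteq\{n\pi:n\in\mathbb Z\}$ separately; only the second will use the hypothesis \eqref{geo4}. The first inclusion is immediate: for every $n\in\mathbb Z$ and $\phi\in\mathbb R$ we have $v(2n\pi-\phi)=v(-\phi)=v(\phi)$ by $2\pi$-periodicity and evenness, so $n\pi\in\cR$.

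For the reverse inclusion I would take an arbitrary $\eta^{*}\in\cR$ and first show that it forces $v$ to carry a \emph{short} period. Substituting $\phi=-\psi$ in $v(2\eta^{*}-\psi)=v(\psi)$ and using evenness gives $v(2\eta^{*}+\psi)=v(\psi)$, so $2\eta^{*}$ is a period of $v$; and $2\pi$ is a period. If $v$ were constant, then $g(\varphi):=v(\eta+\varphi)-v(\eta-\varphi)$ would vanish identically, contradicting $g(\varphi_0)>0$; hence $v$ is non-constant, and the group of periods of a continuous non-constant function on $\mathbb R$ has the form $c\mathbb Z$ for some $c>0$ (the only alternative being a dense group of periods, which would again force constancy). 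Writing $2\pi=ac$ and $2\eta^{*}=bc$ with $a\in\mathbb N$, $b\in\mathbb Z$, we obtain $\eta^{*}=\frac ba\pi$; if $a\mid b$ then $\eta^{*}\in\pi\mathbb Z$ and we are done, so we may assume $a\ge 2$, in which case $T:=c=\frac{2\pi}{a}\le\pi$ is a period of $v$.

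It then remains to show that $v$ cannot have a period $T\le\pi$, and this is where \eqref{geo4} enters. Set $g(\varphi):=v(\eta+\varphi)-v(\eta-\varphi)$ for $\varphi\in\mathbb R$, with $\eta$ the point from \eqref{geo4}. Then $g$ is continuous and odd, and since $T$ is a (two-sided) period of $v$ one has $g(\varphi+T)=v(\eta+\varphi+T)-v(\eta-\varphi-T)=v(\eta+\varphi)-v(\eta-\varphi)=g(\varphi)$, so $g$ is $T$-periodic. By \eqref{geo4}, $g\ge 0$ on $[0,\pi]$ and $g(\varphi_0)>0$. Combining oddness with $T$-periodicity gives $g(\varphi)=-g(-\varphi)=-g(T-\varphi)$; for $\varphi\in[0,T]$ both $\varphi$ and $T-\varphi$ lie in $[0,T]\subseteq[0,\pi]$, so $g(\varphi)\ge 0$ and $g(T-\varphi)\ge 0$, forcing $g\equiv 0$ on $[0,T]$ and hence, by $T$-periodicity, $g\equiv 0$ on $\mathbb R$ — contradicting $g(\varphi_0)>0$. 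Therefore every element of $\cR$ lies in $\pi\mathbb Z$, and together with the first inclusion this yields $\cR=\{n\pi:n\in\mathbb Z\}$.

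I expect the main obstacle to be the middle step: converting a reflection axis $\eta^{*}\notin\pi\mathbb Z$ into a genuine short period of $v$ (at most $\pi$), through the interplay of $2\pi$-periodicity, evenness (which upgrades a reflection axis into the period $2\eta^{*}$), and the structure of subgroups of $\mathbb R$. Once that is available, the sign information in \eqref{geo4}, encoded in the odd function $g$, is straightforwardly incompatible with a period $\le\pi$. The one point needing mild care is the borderline configuration where $\eta^{*}/\pi$ is irrational: there the periods are dense and one must invoke that a continuous function with a dense group of periods is constant.
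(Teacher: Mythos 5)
Your proposal is correct and complete: the inclusion $\pi\mathbb{Z}\subseteq\mathcal{R}$ is immediate from evenness and $2\pi$-periodicity, and your converse argument — a reflection point $\eta^*$ combined with evenness makes $2\eta^*$ a period, the period group of the continuous non-constant $v$ must be $c\mathbb{Z}$ (a dense period group would force constancy), and a period $T\le\pi$ is incompatible with the sign condition via the odd, $T$-periodic function $g(\varphi)=v(\eta+\varphi)-v(\eta-\varphi)$ — has no gaps. Note that the paper itself does not prove this lemma but cites it from \cite{saldana-weth}; your argument is a valid self-contained proof along the natural route (two reflections compose to a translation, then the subgroup structure of the set of periods), so there is nothing in the present paper to compare it against beyond the statement itself.
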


\begin{lemma}\label{lemma2}
 Let $v\in C(\mathbb R)$ be a $2\pi$ periodic function. If 
 \begin{itemize}
  \item [$(i)$] (symmetry) 
  \begin{align}
   v(\theta)&= v(-\theta)\qquad \text{ for all }\theta\in\R, \label{g1}
  \end{align}
\item[$(ii)$] (local reflectional inequalities)  There is $\varepsilon>0$ such that
\begin{align}
   v(\eta+\theta)&\geq v(\eta-\theta)\qquad \text{ for all }\eta\in(0,\varepsilon),\ \theta\in(0,\pi) \,,\label{g2}
\end{align}
 \end{itemize}
then $v$ is nondecreasing in $(0,\pi)$ and also
\begin{align}
  v(\eta+\theta)&\geq v(\eta-\theta) \qquad \text{ for all }\eta,\theta\in(0,\pi).\label{g3}
 \end{align}
Moreover, if the inequality in~\eqref{g2} is strict, then~\eqref{g3} is also strict and $v$ is strictly increasing in $(0,\pi).$
\end{lemma}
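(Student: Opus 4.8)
The plan is to first upgrade the \emph{local} reflectional inequalities \eqref{g2} near $\eta = 0$ to genuine monotonicity of $v$ on all of $(0,\pi)$, and then to propagate the reflectional inequalities themselves to every $\eta \in (0,\pi)$ by a folding argument based on evenness \eqref{g1} and $2\pi$-periodicity.

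For the monotonicity I would reparametrize: given $0 < s < t < \pi$ with $t - s < 2\eps$, set $\eta := \tfrac{t-s}{2} \in (0,\eps)$ and $\theta := \tfrac{t+s}{2} \in (0,\pi)$ (note $s + t < 2\pi$ is automatic on $(0,\pi)$), so that $\eta + \theta = t$ and $\theta - \eta = s$; then \eqref{g2} combined with \eqref{g1}, which gives $v(\eta - \theta) = v(\theta - \eta)$, yields $v(t) \geq v(s)$. For arbitrary $0 < s < t < \pi$ one subdivides $[s,t]$ into finitely many pieces of length $< 2\eps$ and chains, obtaining that $v$ is nondecreasing on $(0,\pi)$, hence on $[0,\pi]$ by continuity. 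If \eqref{g2} is strict for all admissible $\eta,\theta$, the same computation gives $v(t) > v(s)$ whenever $t - s < 2\eps$, and chaining makes $v$ strictly increasing on $(0,\pi)$; with continuity this forces $v(0) < v(s) < v(\pi)$ for $s \in (0,\pi)$, i.e. strict monotonicity on $[0,\pi]$.

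Next I would use \eqref{g1} and periodicity to write $v(x) = v(2\pi - x)$ for $x \in (\pi, 2\pi)$, so that $v$ is nonincreasing (resp. strictly decreasing) on $[\pi, 2\pi]$ and attains its minimum at $0$ and its maximum at $\pi$. To obtain \eqref{g3}, fix $\eta,\theta \in (0,\pi)$; after using \eqref{g1} to replace $v(\eta - \theta)$ by $v(|\eta - \theta|)$ one may assume $0 < \theta \leq \eta < \pi$, so $\eta - \theta \in [0,\pi)$. If $\eta + \theta \leq \pi$, then $0 \leq \eta - \theta < \eta + \theta \leq \pi$ and monotonicity on $[0,\pi]$ gives $v(\eta+\theta) \geq v(\eta-\theta)$. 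If $\eta + \theta > \pi$, then $v(\eta+\theta) = v(2\pi - \eta - \theta)$ with $2\pi - \eta - \theta \in (0,\pi)$, and since $\eta < \pi$ we have $2\pi - \eta - \theta - (\eta - \theta) = 2(\pi - \eta) > 0$, hence $0 \leq \eta - \theta < 2\pi - \eta - \theta < \pi$ and monotonicity again gives $v(\eta+\theta) = v(2\pi - \eta - \theta) \geq v(\eta - \theta)$. Reading the same two chains with strict monotonicity yields the strict version of \eqref{g3} and completes the proof.

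I do not expect a serious obstacle: the only delicate points will be keeping track of the admissibility constraints in the reparametrization and transferring strict monotonicity from the open interval $(0,\pi)$ to the endpoints, which uses continuity together with the fact that a strictly increasing function on $(0,\pi)$ cannot be constant on any subinterval. The folding step in the third paragraph is where the hypothesis $\eta, \theta < \pi$ is genuinely needed, and normalizing the roles of $\eta$ and $\theta$ via evenness before splitting into cases is what keeps the case analysis from branching further.
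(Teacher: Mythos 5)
Your proof is correct and follows essentially the same route as the paper: first upgrade the local reflectional inequality \eqref{g2} to monotonicity on $(0,\pi)$ via the reparametrization $\eta = \tfrac{t-s}{2}$, $\theta = \tfrac{t+s}{2}$ combined with evenness, and then obtain \eqref{g3} by folding $v(\eta+\theta)$ back into $[0,\pi]$ through the identity $v(x) = v(2\pi - x)$ and a two-case comparison. The one small difference is that you derive $v(x) = v(2\pi - x)$ directly from \eqref{g1} and $2\pi$-periodicity, whereas the paper obtains the same fact by invoking Lemma~\ref{lemma1} (after first dispatching the degenerate constant case); your derivation is slightly more direct since that identity needs neither the dichotomy nor the lemma.
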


\begin{proof}
If~\eqref{g2} holds with equality for all $\eta\in(0,\varepsilon)$ and $\theta\in(0,\pi)$, then $v$ is constant and the conclusion follows trivially. Now, assume that there is $\eta_0\in(0,\varepsilon)$ and $\theta_0\in(0,\pi)$ such that~\eqref{g2} is 
strict, then  Lemma~\ref{lemma1} implies that $0$ and $\pi$ are points of reflectional symmetry of $v$, that is, 
$v(\theta) = v(-\theta) = v(2\pi - \theta)$ for each $\theta$. 

Clearly~\eqref{g2} implies that $v$ is non-decreasing on $(0, \pi)$.  Indeed, fix any 
$\theta_1,\theta_2\in(0,\pi)$ such that $\alpha:=\theta_2-\theta_1\in(0,\varepsilon).$ Set $\eta:=\alpha/2\in(0,\varepsilon)$ and $\varphi_0:=\theta_1+\alpha/2\in(0,\pi).$ Then, 
$\theta_2=\eta+\varphi_0$ and $-\theta_1=\eta-\varphi_0$, and therefore by \eqref{g2}
\begin{align}\label{g5}
 v(\theta_2)=v(\eta+\varphi_0)\geq v(\eta-\varphi_0)=v(-\theta_1)=v(\theta_1).
\end{align}
Moreover, if the inequality in~\eqref{g2} is strict then $v$ is strictly increasing. 

To prove~\eqref{g3}, fix $\eta,\theta\in(0,\pi)$ and since $v$ is even, it suffices to prove $v(\eta+\theta) \geq v(|\eta-\theta|)$. If $\eta + \theta \leq \pi$, 
then $\eta+\theta \geq |\eta-\theta|$ and~\eqref{g3} follows from the monotonicity of $v$. If $\eta + \theta > \pi$, then $\pi \geq 2\pi - (\eta + \theta) \geq |\theta -\eta|$ and, by the symmetry and monotonicity of $v$, $v(\eta + \theta) = v(2\pi - (\eta + \theta))\geq v(|\theta-\eta|)$, as required. The case with strict inequality follows analogously. 
\end{proof}

\begin{figure}[h!]
\begin{center}
\begin{picture}(230,130)
    \put(0,5){\includegraphics[width=.50\textwidth]{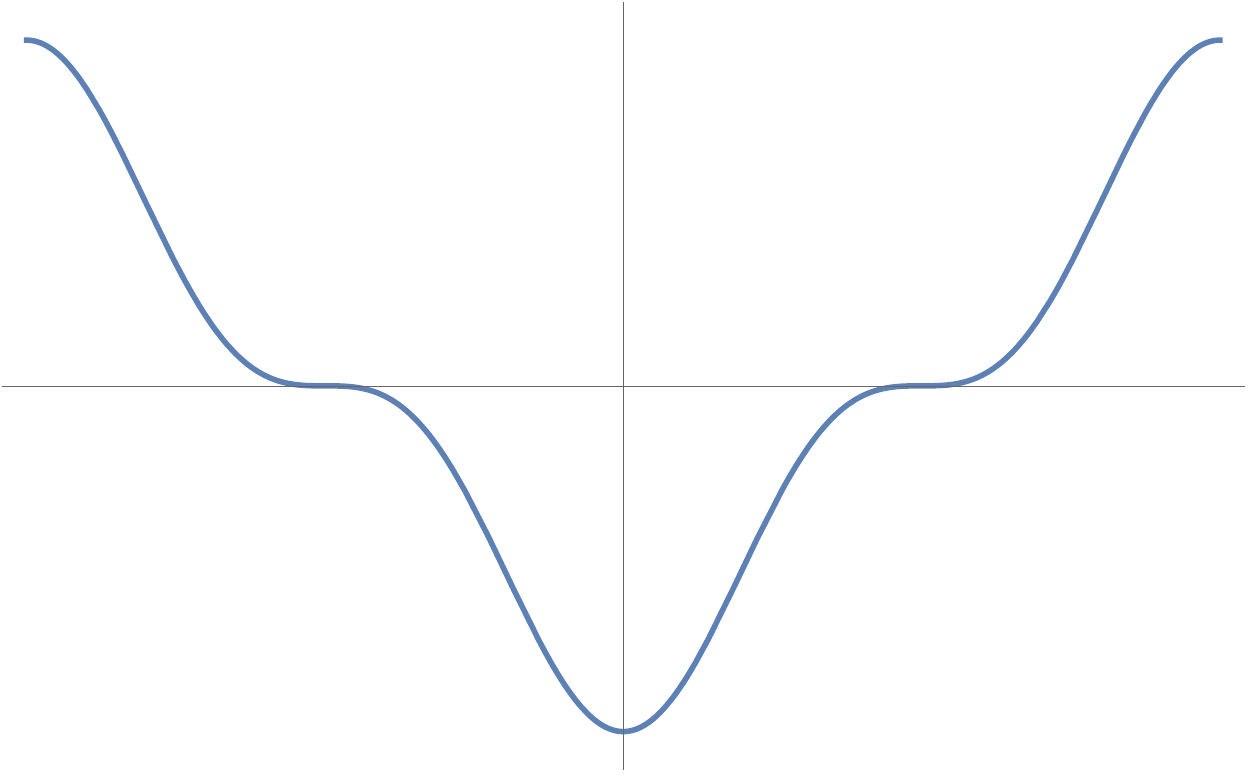}}
    \put(120,130){$v(\theta)$}
    \put(225,70){$\theta$}
    \put(212,60){$\pi$}
    \put(-10,60){$-\pi$}
  \end{picture}
  \end{center}
\caption{Example of a $2\pi$-periodic function $v$ with reflectional symmetry at 0 and satisfying the reflectional inequalities~\eqref{geo4} and~\eqref{g3}.}
\end{figure}
For $e \in \S$ let
\begin{align}\label{ze:def}
 z_e(x)&:= z(\Gamma_e(x))- z(\sigma_e(\Gamma_e(x))), \ \ \  x\in \Sigma_1, z\in \omega(u)\,,
\end{align}
where $\Gamma_e$ is a rotation that maps $e_1$ to $e$ (introduced in Section~\ref{linearization}).
Recall $\Sigma_1 := \{x\in \Sigma: x_1 > 0 \}.$

\begin{lemma}\label{lemma2:cor}
 Let $z\in C(\Sigma)$, $\eps\in(0,\pi)$, and let
 \begin{align*}
 e(\eta):=(\cos\eta,\sin\eta,0,\ldots,0)\in \R^N\qquad \text{for any $\eta\in(0,2\pi)$}.
 \end{align*}
If $z_{e(0)}\equiv 0$ in $\Sigma_1$ and $z_{e(\eta)}>0$ in $\Sigma_1$ for all $\eta\in(0,\eps)$ then
\begin{equation}\label{lem:c}
 z_{e(\eta)}>0\quad \text{ in }\Sigma_1\quad \textrm{ for all }\eta\in(0,\pi),\qquad 
 z_{e(\eta)}<0\quad \text{ in }\Sigma_1\quad \textrm{ for all }\eta\in(-\pi,0).
\end{equation}
\end{lemma}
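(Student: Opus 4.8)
The plan is to reduce the $N$-dimensional statement to a one-dimensional statement about a $2\pi$-periodic function on the circle in the plane $P:=\operatorname{span}\{e_1,e_2\}$, and then apply the geometric Lemmas \ref{lemma1} and \ref{lemma2}. For each fixed $x\in \Sigma_1$ and each direction $e(\eta)$ with $\eta \in (0,2\pi)$, the quantity $z_{e(\eta)}(x)$ compares the values of $z$ at two points obtained by rotating and reflecting $x$; the key observation is that, fixing the spherical coordinate of $x$ appropriately, the map $\eta \mapsto$ (value of $z$ at the rotated point) is, up to reparametrization, a restriction of $z$ to a circle of fixed radius $|x|$ in a two-plane, hence extends to an even, $2\pi$-periodic continuous function of an angular variable. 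More precisely, I would write $x = (x',x'')$ with $x'\in P$, $x''\perp P$, and set $|x'| = \rho\cos\psi$ for the relevant part; the point is that $z(\Gamma_{e(\eta)}(x))$ depends on $\eta$ only through an angular shift, so defining $v(\phi) := z$ evaluated at the point on the $|x'|$-circle at angle $\phi$ (with the transverse coordinates frozen) gives a continuous $2\pi$-periodic function, and $z_{e(\eta)}(x) = v(\eta + \phi_x) - v(\eta - \phi_x)$ for a suitable $\phi_x \in (0,\pi)$ depending on the position of $x$ relative to $P$.

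\medskip

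With this reduction in hand, the hypotheses translate cleanly. The assumption $z_{e(0)}\equiv 0$ in $\Sigma_1$ says that, for every such $v$ arising from a point $x\in\Sigma_1$, we have $v(\phi_x) = v(-\phi_x)$; ranging over all $x$ this forces each relevant $v$ to be even about $0$ (this is hypothesis $(i)$ of Lemma \ref{lemma2}, i.e. \eqref{g1}). The assumption $z_{e(\eta)}>0$ in $\Sigma_1$ for all $\eta\in(0,\eps)$ says $v(\eta+\phi_x) > v(\eta - \phi_x)$ for all $\eta \in (0,\eps)$ and all admissible $\phi_x\in(0,\pi)$, which is exactly the strict version of hypothesis $(ii)$ of Lemma \ref{lemma2} (the strict form of \eqref{g2}). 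Lemma \ref{lemma2} then yields that $v$ is strictly increasing on $(0,\pi)$ and that the strict inequality \eqref{g3} holds, namely $v(\eta+\theta) > v(\eta - \theta)$ for all $\eta,\theta\in(0,\pi)$. Applying this with $\theta = \phi_x$ and an arbitrary $\eta\in(0,\pi)$ gives $z_{e(\eta)}(x) > 0$ for all $x\in\Sigma_1$ and all $\eta\in(0,\pi)$, which is the first assertion of \eqref{lem:c}. For the second assertion, note $\sigma_{e(\eta)} = \sigma_{e(\eta+\pi)}$ and $z_{e(\eta)} = - z_{e(\eta+\pi)}\circ(\text{reflection})$ — more directly, from the definition \eqref{ze:def} one has $z_{e(-\eta)}(x) = z(\Gamma_{e(-\eta)}x) - z(\sigma_{e(-\eta)}\Gamma_{e(-\eta)}x)$, and since reflecting across $H(e(-\eta))$ is the same as reflecting across $H(e(\eta))$ after the sign flip $e(-\eta) = \sigma_{e_2\text{-axis}}$-image of $e(\eta)$, one gets $z_{e(\eta)} = - z_{e(-\eta)}$ after composing with the isometry $\sigma_{e(\eta)}$; hence $z_{e(\eta)}<0$ in $\Sigma_1$ for $\eta\in(-\pi,0)$ follows from the positivity already proved for $\eta\in(0,\pi)$. (Here one uses that $z_{e}(x)$ composed with $\sigma_e$ equals $-z_e$, a symmetry already implicit in the construction in Section \ref{linearization}.)

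\medskip

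The main obstacle I anticipate is making the reduction to the one-dimensional function $v$ fully rigorous and uniform: one must check that the \emph{same} function $v$ (or at least a consistent family) captures $z_{e(\eta)}(x)$ for \emph{all} $\eta$ simultaneously, not just infinitesimally, and that $\phi_x$ genuinely ranges over $(0,\pi)$ (or a dense enough subset) as $x$ ranges over $\Sigma_1$, so that the conclusions of Lemma \ref{lemma2} can be invoked pointwise. Concretely, one fixes $x$, decomposes it with respect to the two-plane $P$ spanned by the first two coordinates, and observes that the action of the rotations $\Gamma_{e(\eta)}$ and reflections $\sigma_{e(\eta)}$ restricted to the orbit of $x$ is conjugate to rotation/reflection of the circle $\{|x'| = \text{const}\}\subset P$, with the transverse coordinates $x''$ held fixed; continuity of $z$ then gives the required $2\pi$-periodic even $v$. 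Once this bookkeeping is done, the rest is a direct citation of Lemma \ref{lemma2} (whose own proof, via Lemma \ref{lemma1}, is already in the excerpt), so no new analytic difficulty arises — the work is entirely in the geometry of the parametrization.
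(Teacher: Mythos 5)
Your plan is exactly the paper's proof: for fixed $r>0$ and $x'\in\R^{N-2}$ one sets $v(\theta)=z(r\sin\theta,r\cos\theta,x')$, so that $z_{e(\eta)}(x)=v(\theta+\eta)-v(\eta-\theta)$ with $\theta\in(0,\pi)$ parametrizing $x\in\Sigma_1$, whence the uniformity worries you flag (same $v$ for all $\eta$, $\phi_x$ covering $(0,\pi)$) evaporate once the parametrization is written out, and Lemma~\ref{lemma2} gives $v(\eta+\theta)>v(\eta-\theta)$ for $\eta,\theta\in(0,\pi)$. The only difference is cosmetic: rather than the somewhat delicate $z_{e(\eta)}=-z_{e(-\eta)}$ identity you sketch for the second half of \eqref{lem:c} (which requires care, since $\sigma_e\circ\Gamma_e$ is orientation-reversing and hence not an admissible choice of $\Gamma_{-e}$), the paper obtains it immediately from the evenness and $2\pi$-periodicity of $v$, which give $v(\eta-\theta)>v(\eta+\theta)$ for $\eta\in(-\pi,0)$, $\theta\in(0,\pi)$.
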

\begin{proof}
 For every $x\in\R^N$ ($N\geq 2$) we write $x=(x_1,x_2,x')$ for some $x'\in \R^{N-2}$. Fix $x'\in \R^{N-2}$, $r>0$, and let $v:(0,2\pi)\to\R$ be given by
$v(\theta):=z(r\sin\theta,r\cos\theta,x').$  With this notation, \eqref{ze:def} can be rewritten for
 $x_1 = r \sin \theta$ and $x_2 = r \cos \theta$ as
\begin{equation}
z_{e(\eta)}(x) = v(\theta + \eta) - v(\eta - \theta).
\end{equation} 
Note that we are not using the usual polar coordinates. 
 Since $z_{e(0)}\equiv 0$ in $\Sigma_1$ and $z_{e(\eta)}>0$ in $\Sigma_1$ for all $\eta\in(0,\eps)$ we have that $v(\theta)=v(-\theta)$ for $\theta\in (0,\pi)$ and $v(\eta+\theta)>v(\eta-\theta)$ for all $\theta\in (0,\pi)$ and $\eta\in(0,\eps).$ By Lemma~\ref{lemma2} we have that $v(\eta+\theta)>v(\eta-\theta)$ for $\theta,\eta\in (0,\pi)$.  Using the symmetry with respect to 0 and the $(2\pi)$-periodicity we also have that $v(\eta-\theta)>v(\eta+\theta)$ for $\theta\in (0,\pi)$ and $\eta\in(-\pi,0)$. Since this holds for $x'\in \R^{N-2}$ and $r>0$ arbitrary we obtain that~\eqref{lem:c} holds.

\end{proof}

We are ready to show Lemma \ref{lemma2.5}.

\begin{proof}[Proof of Lemma \ref{lemma2.5}]
 Let $p\in U$ be as in~\eqref{h2}, $q\in \S\backslash\{p\}$ and $P:=\operatorname{span}\{q,p\}.$ Without loss of generality we assume that $p=(1,0,\ldots,0)$ and $P:=\{(x_1,x_2,0,\ldots,0)\mid x_1,x_2\in\mathbb R\}.$ Using polar coordinates, let $\tilde v(r,\varphi,x') := z(r\cos\varphi,r \sin \varphi, x')=z(x) $ with $x'=(x_3,\ldots,x_{N}) \in \R^{N-2},$ $(x_1,x_2,x')=x\in \Sigma,$ $\varphi\in\mathbb R,$ and $r=|x|$.  Fix $r>0,$ $x' \in \R^{N-2}$ and define $v: \R \to \R,$ $v(\varphi):=\tilde v (r_,\varphi,x').$  Since $U$ is open, there exists $\varepsilon>0$ such that
 $v(\varphi)=v(-\varphi)$ for $\varphi\in \mathbb R$ and $v(\eta+\varphi)\geq v(\eta-\varphi)$ for $\eta \in (-\varepsilon,\varepsilon)$ and $\varphi\in (0, \pi)$.  Then, by Lemma~\ref{lemma2} we have that $v(\eta+\varphi)\geq v(\eta-\varphi)$ for $\eta \in (-\varepsilon,\pi),\varphi\in (0, \pi),$ which yields that $v(\eta+\varphi)= v(\eta-\varphi)$ for $\eta \in (-\varepsilon,0),\varphi\in (0, \pi),$  and consequently $v$ is constant, that is, $\tilde v$ does not depend in $\varphi$.  Since  $r>0$ and $x' \in \R^{N-2}$ 
were arbitrary, we have $z\equiv z\circ \sigma_{e}$ for all $e\in P$, and in particular $z\equiv z\circ \sigma_{q}.$  Since $q$ was chosen arbitrarily in $\S$  the radial symmetry follows.
\end{proof}

\section{Symmetry results}\label{symmetry:results}

\subsection{Strong stability outside compact sets}

In this section we assume that $(f_0)$, $(f_1)'$, $(f_2)'$ hold and  $u$ is a classical solution of~\eqref{model} satisfying $(U_0)$--$(U_2)$. We recall some previously used notation.  Let $\Sigma = \R^N\setminus B_R$, where  $B_R = \{x \in \R^N : |x| <  R\}$ if $R>0$ and $B_0 := \emptyset$; furthermore, as defined in the introduction, $\Sigma_1=\Sigma(e_1)=\{x\in \Sigma\::\: x\cdot e_1>0\}$, with $e_1=(1,0,\ldots,0)\in\R^N$.  Let $G$ be given by~\eqref{G} and $U$ be a bounded domain such that 
 \begin{align}\label{U}
 G\subset\subset U \subset \subset\Sigma_1. 
 \end{align}
  If $J=(a,b)$ is the interval given in~\eqref{Idef}, by $(f_2)'$, then 
  \begin{align}\label{B}
  B=B_{\frac{b-a}{2}}\Big(\frac{a+b}{2}e_1\Big)\subset\subset G
  \end{align}
 as in Lemma~\ref{subsolb}, see Figure~\ref{fig1}.  For $e\in\S$, let $\hat w_e$ be as in Lemma~\ref{T:l} and note that, by~\eqref{bds}, there is $\hat\beta_U>0$ (independent of $e$) such that
  \begin{align}\label{hbeta}
   |\hat c^e|+|\hat b_i|<\hat\beta_U\qquad \text{ in }U\times(0,\infty).
  \end{align}

\begin{figure}[h!]
\begin{center}
\includegraphics[height=4cm]{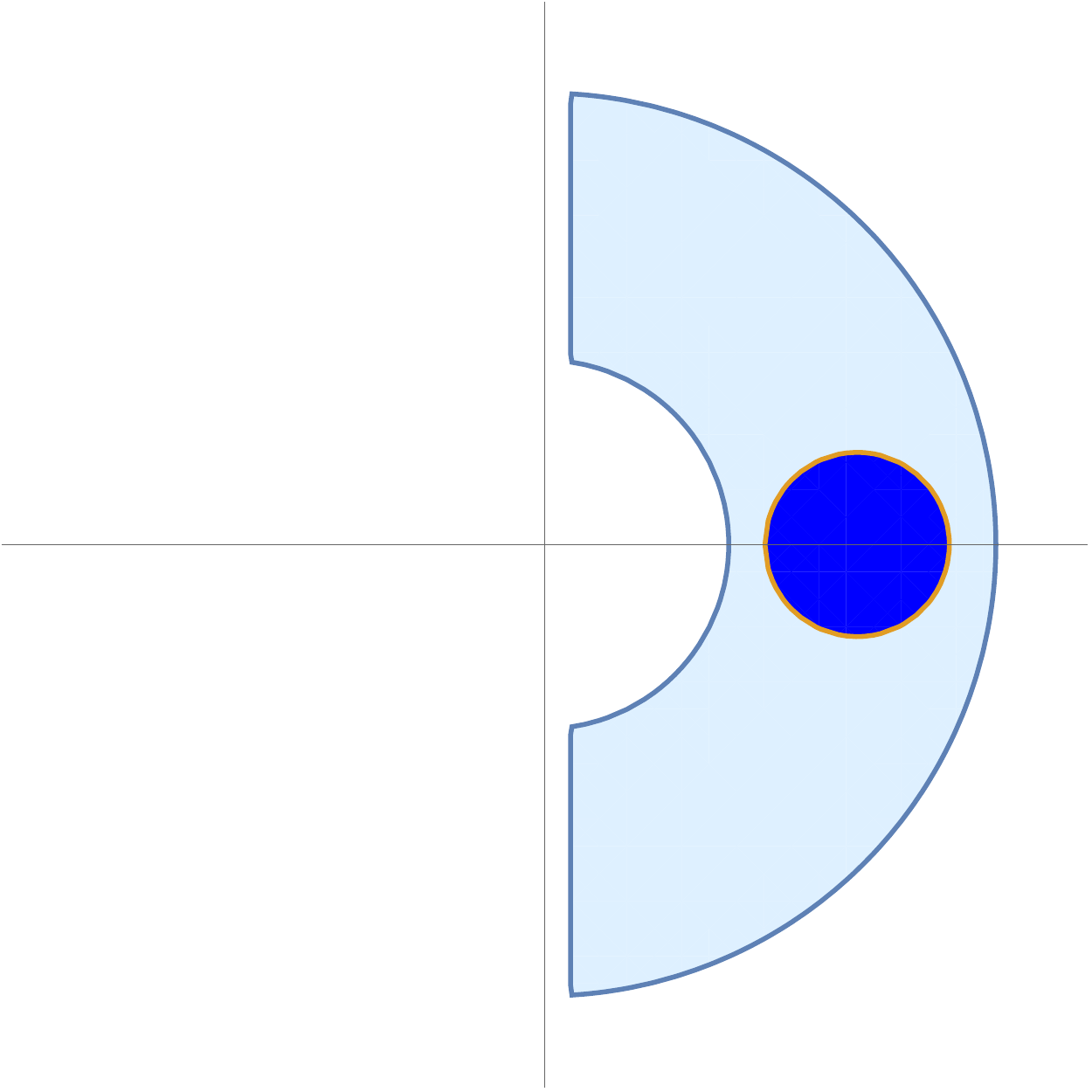}
\caption{The set $G$ with $\Sigma=\R^2\backslash B_R$ and the ball $B=B_\frac{b-a}{2}(\frac{a+b}{2}e_1)$.}\label{fig1}
\end{center}
\end{figure}
  
The next proposition extends the strategy in \cite[Theorem 3.7]{polacik} to unbounded domains.

\begin{prop}\label{polaciku}
 Let $\gamma$ be as in~\eqref{gammaf2}. There is $\mu>0$ such that, if there is $\tau>0$ and $e\in\S$ with
\begin{align}
\hat w_e& >0,\qquad \text{ in }\overline{G} \times [\tau,\tau + 1),\nonumber\\
||\hat w_e^-(\cdot,\tau)||_{L^\infty(\Sigma_1\backslash G)}& \leq \mu ||\hat w_e||_{L^\infty(G\times (\tau+\frac{1}{8},\tau+\frac{1}{4}))},\label{obs0}
\end{align}
then $w_e>0$ in $\overline{G}\times [\tau,\infty)$ and $\lim_{t\to \infty}|w_e^-(\cdot,t)||_{L^\infty(\Sigma_1)}=0$.
\end{prop}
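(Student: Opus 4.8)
The plan is to propagate positivity of $\hat w_e$ on $\overline{G}$ forward in time, one unit time-step at a time, by a bootstrap argument that couples the Harnack inequality (Lemma~\ref{harnack:inequality}) on $U$, the subsolution $\varphi$ from Lemma~\ref{subsolb} on the ball $B\subset\subset G$, and the weak maximum principle (Lemma~\ref{weakmp}) on $\Sigma_1$. The key quantitative gain comes from the fact that $\hat c^e < -\gamma$ outside $G$ (see~\eqref{G}), which forces the negative part of $\hat w_e$ to decay at rate $e^{-\gamma t}$ away from $G$, matching exactly the decay rate of the subsolution $\varphi(x,t)=e^{-\gamma t}\eta(\tfrac{x-x_0}{r})$ built in Lemma~\ref{subsolb}; this is why the constants in~\eqref{gammaf2} were chosen as they were.

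First I would set up the inductive step. Suppose $\hat w_e>0$ on $\overline{G}\times[\tau,\tau+1)$ with the smallness~\eqref{obs0}. Apply the weak maximum principle (Lemma~\ref{weakmp}) on $(\Sigma_1\setminus G)\times(\tau,\infty)$: since $c \le \sup \hat c^e$ there is $\le -\gamma$ and $\hat w_e\to 0$ at spatial infinity by $(U_1)$, the negative part satisfies $e^{\gamma t}\hat w_e^-(\cdot,t)\le \sup$ over the parabolic boundary, which consists of the slice $t=\tau$ and the lateral part $\partial G\times(\tau,\infty)$ where $\hat w_e>0$ by hypothesis (for the portion of $\partial\Sigma_1$ we have $\hat w_e=0$). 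Hence $\|\hat w_e^-(\cdot,t)\|_{L^\infty(\Sigma_1\setminus G)}\le e^{-\gamma(t-\tau)}\|\hat w_e^-(\cdot,\tau)\|_{L^\infty(\Sigma_1\setminus G)}$ for as long as $\hat w_e$ stays positive on $\partial G$. Next, apply the Harnack inequality on $U$ (with $D=G$, so $\operatorname{dist}(\overline G,\partial U)\ge d>0$ by~\eqref{U}, $\theta=\tfrac18$, $\beta_U$ from~\eqref{hbeta}): this yields a constant $\kappa>0$, depending only on $N,\operatorname{diam}U,\hat\beta_U,d$, with
\begin{align*}
\inf_{G\times(\tau+\frac{3}{8},\tau+\frac{1}{2})}\hat w_e \ \ge\ \kappa\,\|\hat w_e\|_{L^\infty(G\times(\tau+\frac18,\tau+\frac14))}\ -\ e^{\frac{\hat\beta_U}{2}}\|\hat w_e^-(\cdot,\tau)\|_{L^\infty(\Sigma_1\setminus G)}.
\end{align*}
Choosing $\mu$ small enough that $e^{\hat\beta_U/2}\mu \le \tfrac{\kappa}{2}$ makes the right-hand side $\ge \tfrac{\kappa}{2}\|\hat w_e\|_{L^\infty(G\times(\tau+\frac18,\tau+\frac14))}$, so $\hat w_e$ has a definite positive lower bound on $\overline G$ at time $\tau+\tfrac12$ (using continuity up to $\overline G$ and that $\hat w_e=0$ only on $\partial\Sigma_1$, not on the part of $\partial G$ interior to $\Sigma_1$).

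Then I would propagate positivity past time $\tau+1$ by comparison with the subsolution. On $B\times(\tau+\tfrac12,\tau+\tfrac12+T)$, $\hat w_e$ is a supersolution of the same operator for which $\varphi$ is a strict subsolution (Lemma~\ref{subsolb}); on $\partial B$, $\varphi=0\le\hat w_e$; at time $\tau+\tfrac12$, $\hat w_e\ge \tfrac{\kappa}{2}\|\cdots\| \ge \lambda\varphi(\cdot,\tau+\tfrac12)$ for a suitable small $\lambda>0$ proportional to the Harnack bound. By the maximum principle $\hat w_e\ge\lambda\varphi>0$ on $B\times[\tau+\tfrac12,\infty)$, so in particular $\hat w_e>0$ on a nonempty open subset of $G$ for all later times; combined with the weak maximum principle estimate (which keeps $\hat w_e^-$ exponentially small off $G$) and a further Harnack application, one re-establishes $\hat w_e>0$ on all of $\overline G\times[\tau+1,\tau+2)$ together with the renewed smallness~\eqref{obs0} at $\tau+1$, because $\|\hat w_e^-(\cdot,\tau+1)\|_{L^\infty(\Sigma_1\setminus G)}\le e^{-\gamma}\|\hat w_e^-(\cdot,\tau)\|$ while the $L^\infty$ norm of $\hat w_e$ on $G$ over the relevant window stays bounded below by the subsolution. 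Iterating over $\tau, \tau+1, \tau+2,\dots$ gives $\hat w_e>0$ on $\overline G\times[\tau,\infty)$, hence $w_e>0$ there by Lemma~\ref{T:l}; and the geometric decay of $\|\hat w_e^-(\cdot,\tau+n)\|_{L^\infty(\Sigma_1\setminus G)}$ together with $\hat w_e>0$ on $\overline G$ forces $\|\hat w_e^-(\cdot,t)\|_{L^\infty(\Sigma_1)}\to 0$, and then~\eqref{equivalence} gives $\|w_e^-(\cdot,t)\|_{L^\infty(\Sigma_1)}\to 0$.

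The main obstacle is bookkeeping the constants so the induction is genuinely self-sustaining: one must verify that the lower bound for $\|\hat w_e\|_{L^\infty(G\times(\tau+n+\frac18,\tau+n+\frac14))}$ provided by the subsolution does not deteriorate as $n\to\infty$ (it does not, since $\varphi$'s decay is $e^{-\gamma t}$ and the $L^\infty$-norm over a unit window loses only a fixed factor $e^{-\gamma}$), while simultaneously $\|\hat w_e^-(\cdot,\tau+n)\|_{L^\infty(\Sigma_1\setminus G)}$ decays at the \emph{same} rate $e^{-\gamma}$ — so the ratio in~\eqref{obs0} stays bounded and, after possibly shrinking $\mu$ once more, stays below the threshold for all $n$. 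Getting the uniformity in $e\in\S$ is free because $\gamma$, $\rho_1$, $G$, $U$, $\hat\beta_U$, and hence $\kappa$ and $\mu$ are all independent of $e$, as noted after~\eqref{gammaf2} and in~\eqref{hbeta}. A minor technical point is handling the boundary $\partial G$: part of it may lie on $\partial\Sigma_1$ (where everything vanishes) and part in the interior of $\Sigma_1$; the Harnack/subsolution argument is applied on $U\supset\supset G$ and $B\subset\subset G$ respectively, so no estimate is ever needed up to the "bad" part of $\partial G$, which is exactly why the auxiliary sets $U$ and $B$ were introduced.
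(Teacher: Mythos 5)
Your ingredients and the key structural observation — that the weak maximum principle makes $\hat w_e^-$ decay at rate $e^{-\gamma}$ off $G$ while the subsolution $\varphi(x,t)=e^{-\gamma t}\eta(\tfrac{x-x_0}{r})$ from Lemma~\ref{subsolb} supplies a positive lower bound on $B$ decaying at exactly the same rate — are precisely those of the paper. However, your inductive framing has a genuine gap: the renewal of~\eqref{obs0} at time $\tau+1$ is asserted but not actually provable. Tracking the constants, the initial Harnack application yields a lower bound on $\overline G$ of size $(\kappa-\mu)\,Q_0$ where $Q_0:=\|\hat w_e\|_{L^\infty(G\times(\tau+\frac18,\tau+\frac14))}$, so the subsolution gives $Q_1\geq (\kappa-\mu)e^{-c\gamma}Q_0$ for some fixed $c\in(0,1)$, while $\|\hat w_e^-(\cdot,\tau+1)\|_{L^\infty(\Sigma_1\setminus G)}\leq e^{-\gamma}\mu Q_0$. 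To re-establish~\eqref{obs0} at $\tau+1$ you would therefore need $\kappa-\mu\geq e^{-(1-c)\gamma}$, a quantity close to $1$, whereas the Harnack constant $\kappa$ is in general much smaller than $1$; shrinking $\mu$ cannot repair this. Moreover, the comparison ``$\hat w_e\geq\lambda\varphi>0$ on $B\times[\tau+\tfrac12,\infty)$'' implicitly uses positivity of $\hat w_e$ on $\partial B$ for \emph{all} later times, which is part of what you are trying to prove; as stated, this step is circular.

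The paper avoids both difficulties by arguing once, by contradiction, at the \emph{first} time $T\geq\tau+1$ where $\hat w_e$ vanishes on $\partial G$. On $[\tau,T)$ all your estimates are unconditionally valid: the weak maximum principle gives the $e^{-\gamma(t-\tau)}$ decay of $\hat w_e^-$; a single initial Harnack step gives $\hat w_e\geq r_1:=r_0(\tfrac{\kappa}{\mu}-1)$ on $\overline G\times[\tau+3\theta,\tau+4\theta]$ with $\theta=\tfrac18$; comparison with $\varphi$ propagates the lower bound $r_1e^{-\gamma(t-\tau-4\theta)}$ on $B$ up to time $T$. A \emph{second} Harnack application on $[T-4\theta,T]$ then produces, after cancelling the common factor $e^{-\gamma(T-\tau-4\theta)}$, the strictly positive quantity $r_0e^{-\gamma(T-\tau-4\theta)}\bigl(\tfrac{\kappa^2}{\mu}-\kappa-1\bigr)$ — contradicting $\hat w_e(x^*,T)=0$ — provided only that $\mu<\tfrac{\kappa^2}{\kappa+1}$, a condition that is always satisfiable and \emph{uniform in} $T$. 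So rather than an iteration that must close at every integer time, the paper performs exactly two Harnack steps and one subsolution comparison over the whole, arbitrarily long interval $[\tau,T]$; once $\hat w_e>0$ on $\overline G\times[\tau,\infty)$ is established this way, the decay of $w_e^-$ follows directly from letting $t\to\infty$ in the weak maximum principle estimate, via~\eqref{equivalence}. You should replace your step-by-step renewal of~\eqref{obs0} with this single contradiction argument; the rest of your proof can then be kept essentially verbatim.
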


\begin{proof}
For $\theta=\frac{1}{8}$, $D=B$ as in \eqref{B}, $\Omega=G$, and $U$ as in~\eqref{U}, $\hat\beta_U$ as in~\eqref{hbeta}, and $\gamma$ as in~\eqref{gammaf2}, let $\kappa>0$ be given by Lemma~\ref{harnack:inequality}, and let $\mu>0$ be such that
\begin{align}\label{mu}
 \mu<\frac{\kappa^2}{\kappa+1}<\kappa.
\end{align}
We argue by contradiction. Assume (by the maximum principle) that there is $T\geq \tau+1$ such that
\begin{align}
 \hat w_e&>0\qquad \text{ in }G\times [\tau,T),\nonumber\\
 \hat w_e(x^*,T)&=0\qquad \text{ for some } x^*\in \partial G. \label{cont}
\end{align}
Define $r_0:=||(\hat w_e)^{-}(\cdot,\tau)||_{L^\infty(\Sigma_1\backslash G)}.$  By Lemma~\ref{T:l}, $m:=\sup\limits_{\Sigma_1\backslash G \times (\tau,\infty)}c^e\leq -\gamma$ and $\hat w_e\geq 0$ in $\partial(\Sigma_1\backslash G)\times [\tau,T].$  Then, by Lemma~\ref{weakmp},
\begin{align}
||(\hat w_e)^{-}(\cdot,t)||_{L^\infty(\Sigma_1)}&=||(\hat w_e)^{-}(\cdot,t)||_{L^\infty(\Sigma_1\backslash G)}\\
&\leq e^{mt}\sup_{(s,y)\in\partial_p (\Sigma_1\backslash G \times [\tau,T])} e^{-m s} (\hat w_e)^{-}(y)\nonumber\\
&=e^{m(t-\tau)} ||(\hat w_e)^{-}(\cdot,\tau)||_{L^\infty(\Sigma_1\backslash G)}\label{obs1} \\
&\leq r_0,\label{obs}
\end{align}
for all $t\in[\tau,T).$  Hence,~\eqref{obs0},~\eqref{mu},~\eqref{obs}, and Lemma~\ref{harnack:inequality} imply, for $(x,t)\in \overline{G}\times [\tau+3\theta,\tau+4\theta]$, that 
\begin{align}\label{obs3}
 \hat w_e(x,t)&\geq \kappa ||(\hat w_e)^+||_{L^\infty(G\times(\tau+\theta,\tau+2\theta))}-e^{4m\theta}\sup_{\partial_P(U\times(\tau,\tau+4\theta))}\hat w_e^-\nonumber\\
&\geq \kappa ||(\hat w_e)^+||_{L^\infty(G\times(\tau+\theta,\tau+2\theta))}-e^{4m\theta}r_0
\geq r_0 \bigg(\frac{\kappa}{\mu}-1\bigg)=:r_1>0 \,. 
\end{align}
   Furthermore, by Lemma~\ref{subsolb}, there is $\varphi(x,t)=e^{-\gamma t}\eta(\frac{x-x_0}{r})$ satisfying~\eqref{phi:def} (with $x_0=\frac{a+b}{2}e_1$). By~\eqref{obs3},
\begin{align*}
\hat w_e&>0\qquad \text{ in } G\times [\tau+4\theta,T),\\
\varphi &= 0\qquad \text{ on } \partial B\times [\tau+4\theta,T),\\
\hat w_e(x,\tau+4\theta)&\geq r_1\frac{\varphi(x,\tau+4\theta)}{||\varphi(\cdot,\tau+4\theta)||_{L^\infty(B)}}\qquad \text{ for }x\in B.
\end{align*}
Then, by comparison,
\begin{align*}
 \hat w_e(x,t)\geq r_1\frac{\varphi(x,t)}{||\varphi(\cdot,\tau+4\theta)||_{L^\infty(B)}}\geq r_1 e^{-\gamma(t-\tau-4\theta)}\frac{\eta\left(\frac{x - x_0}{r}\right)}{||\eta||_{L^\infty(B)}}
\end{align*}
for $x\in B$ and $t\in[\tau+4\theta,T].$ Using Lemma~\ref{harnack:inequality} in $[\tau+4\theta,T]$ and~\eqref{obs1},
\begin{align*}
 \hat w_e(x,T)&\geq \kappa||\hat w_e||_{L^\infty(G\times(T-3\theta,T-2\theta))}-e^{4m\theta}\sup_{\partial_P(U \times(T-4\theta,T))}\hat w_e^-\\
&\geq \kappa r_1 e^{-\gamma(T-\tau-4\theta)}-e^{4m\theta}e^{m(T-\tau-4\theta)}r_0\\
&= r_0e^{-\gamma(T-\tau-4\theta)}\bigg(\kappa \bigg(\frac{\kappa}{\mu}-1\bigg) -e^{4m\theta}e^{(m+\gamma)(T-\tau-4\theta)}  \bigg)\\
&\geq r_0e^{-\gamma(T-\tau-4\theta)}\bigg(\frac{\kappa^2}{\mu}-\kappa  -1\bigg)>0,
\end{align*}
for all $x\in \overline{G}$, which contradicts~\eqref{cont}. Therefore, $\hat w_e>0$ in $\overline{G}\times [\tau,\infty)$, and consequently
by letting $t\to\infty$ in~\eqref{obs1}, we have  that 
 $\lim_{t\to \infty}||\hat w_e^-(\cdot,t)||_{L^\infty(\Sigma_1)}=0$.
\end{proof}

 For $e \in \S$, let $z_e$ be as in~\eqref{ze:def} and define
\begin{align}\label{cM:def}
 {\cal M}:=\{e\in \S\::\: z_e(x)\geq 0\text{ in }\Sigma_1 \text{ for all }z\in\omega(u)\}.
\end{align}

We are ready to show Theorem~\ref{strong:thm} via a rotating-plane method.  We split the proof in several lemmas.

\begin{lemma}\label{teopoho}
 Let $e\in \S$ be as in $(U_0)$ and $P\subset \mathbb R^N$ be a two dimensional subspace such that $e\in P$. Then, there exists 
 $\varepsilon>0$ such that $e'\in{\cal M}$ for all $e'\in \S\bigcap P$ with $|e'-e|<\varepsilon.$
\end{lemma}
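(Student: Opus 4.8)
The plan is to implement the \emph{start} of the rotating-plane method. Fix $e\in\S$ as in $(U_0)$ and a two-dimensional subspace $P\ni e$. For each $e'\in\S\cap P$ close to $e$, consider the function $w_{e'}$ solving~\eqref{Pie}; by $(U_0)$ the initial datum $w_e(\cdot,0)=u_0\circ\Gamma_e-u_0\circ\sigma_e\circ\Gamma_e$ is nonnegative and not identically zero on $\Sigma_1$. The goal is to show that the hypotheses of Proposition~\ref{polaciku} are met for $e'=e$ at some time $\tau>0$, and that this persists for $e'$ in a neighbourhood of $e$ in $\S\cap P$, which then yields $w_{e'}\ge 0$ (hence $z_{e'}\ge 0$ for all $z\in\omega(u)$, i.e. $e'\in\cal M$) by passing to the limit $t\to\infty$ in the conclusion $\lim_{t\to\infty}\|w_{e'}^-(\cdot,t)\|_{L^\infty(\Sigma_1)}=0$.

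The key steps, in order: (1) First I would pass to $\hat w_{e}$ via Lemma~\ref{T:l}, so that on $\Sigma_1\setminus G$ the zeroth-order coefficient $\hat c^{e}$ is below $-\gamma$ and the weak maximum principle (Lemma~\ref{weakmp}) controls $\hat w_e^-$ outside $G$ by its values on $\partial G$; note $\hat w_e\ge 0$ initially since it has the same sign as $w_e$. (2) Next, using the strong maximum principle / Harnack inequality (Lemma~\ref{harnack:inequality}) applied to the linear equation~\eqref{weeq} on a neighbourhood of $\overline G$ and the fact that $\hat w_e(\cdot,0)\ge 0$, $\not\equiv 0$, I would show that $\hat w_e>0$ in $\overline G\times[\tau_0,\tau_0+1]$ for some $\tau_0>0$ and that $\|\hat w_e\|_{L^\infty(G\times(\tau_0+1/8,\tau_0+1/4))}>0$; moreover by the weak maximum principle $\|\hat w_e^-(\cdot,\tau_0)\|_{L^\infty(\Sigma_1\setminus G)}$ decays like $e^{-\gamma\tau_0}$ times a fixed constant depending on $u_0$, while the positive mass on $G$ is bounded below independently of $\tau_0$ in a suitable compact-time window. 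Hence for $\tau_0$ large enough the smallness condition~\eqref{obs0} with the $\mu$ from Proposition~\ref{polaciku} holds for $e=e$. (3) Then I would invoke continuous dependence of solutions of~\eqref{Pie} (equivalently~\eqref{weeq}) on the parameter $e'\in\S\cap P$: the coefficients $c^{e'}$, $\hat b_i$, $\hat c^{e'}$ and the initial data $w_{e'}(\cdot,0)$ depend continuously on $e'$ (the bounds~\eqref{beta0},~\eqref{rho1},~\eqref{hbeta} being uniform in $e'$), so by parabolic estimates $\hat w_{e'}\to\hat w_e$ locally uniformly on $\overline{G}\times[\tau_0,\tau_0+1]$ and in the $L^\infty$ windows appearing in~\eqref{obs0}. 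Here I must also use that $u_0\circ\Gamma_{e'}-u_0\circ\sigma_{e'}\circ\Gamma_{e'}\to u_0\circ\Gamma_e-u_0\circ\sigma_e\circ\Gamma_e$ in $C_0(\overline{\Sigma}_1)$, which follows from $u_0\in C_0(\overline\Sigma)$. Since both the strict positivity on $\overline G\times[\tau_0,\tau_0+1)$ and the smallness estimate~\eqref{obs0} are open conditions, they survive for $|e'-e|<\varepsilon$ for some $\varepsilon>0$. (4) Finally apply Proposition~\ref{polaciku} to each such $e'$ to get $w_{e'}\ge 0$ in $\overline G\times[\tau_0,\infty)$ and $\|w_{e'}^-(\cdot,t)\|_{L^\infty(\Sigma_1)}\to 0$; then for any $z\in\omega(u)$, taking $t_n\to\infty$ with $u(\cdot,t_n)\to z$ we obtain $z_{e'}\ge 0$ in $\Sigma_1$, i.e. $e'\in\cal M$.

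The main obstacle I anticipate is step (2)–(3): producing the time $\tau_0$ at which the \emph{ratio} condition~\eqref{obs0} holds, uniformly enough in $e'$. One needs a lower bound on $\|\hat w_{e'}\|_{L^\infty(G\times(\tau_0+1/8,\tau_0+1/4))}$ that does \emph{not} deteriorate as $\tau_0\to\infty$, while $\|\hat w_{e'}^-(\cdot,\tau_0)\|_{L^\infty(\Sigma_1\setminus G)}$ does go to $0$; the former is delicate because a priori $\hat w_e$ could itself decay in time. This is handled by comparison with the subsolution $\varphi$ of Lemma~\ref{subsolb}: once $\hat w_e>0$ on $\overline G$ at one time and dominates a small multiple of $\varphi$ on $B$, the Harnack inequality propagates a definite ratio forward, so the positive part decays at rate at most $\gamma$, exactly matching the decay rate of the negative part coming from~\eqref{rho1}; choosing $\mu$ as in~\eqref{mu} closes the argument. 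The second subtlety is ensuring all constants ($\kappa$, $\mu$, $\hat\beta_U$, $\gamma$, the parabolic estimate constants) are \emph{independent of} $e'$, which is guaranteed by the uniformity already recorded in~\eqref{beta0},~\eqref{rho1},~\eqref{bds},~\eqref{hbeta} and by the rotational symmetry of the domain.
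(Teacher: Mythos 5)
Your strategy has a genuine gap in steps (1)--(2), and the gap hides a significant simplification that the paper exploits. You propose to pass to $\hat w_e$ and then use the weak maximum principle (Lemma~\ref{weakmp}) on $\Sigma_1\setminus G$ to control $\hat w_e^-$ there ``by its values on $\partial G$''. But to make that work you would need to already know something about $\hat w_e$ on $\partial G\times(0,\infty)$ --- precisely the positivity on $\overline G$ that you are trying to establish. This is circular. Consequently your assertion that $\|\hat w_e^-(\cdot,\tau_0)\|_{L^\infty(\Sigma_1\setminus G)}$ ``decays like $e^{-\gamma\tau_0}$'' has no basis at this stage, and your subsequent worry about producing a lower bound on the positive part that survives as $\tau_0\to\infty$ is an artefact of this misapplication, not a real difficulty.

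The paper's proof is both simpler and stronger: it applies Lemma~\ref{weakmp} to $w_e$ on \emph{all} of $\Sigma_1$ (not on $\Sigma_1\setminus G$), where the parabolic boundary consists only of $\partial\Sigma_1\times(0,\infty)$ (where $w_e=0$) and $\Sigma_1\times\{0\}$ (where $w_e\geq 0$ by $(U_0)$). Since $m=\sup_{\Sigma_1\times(0,\infty)}c^e<\infty$ by $(f_1)'$ and $(f_2)'$, this gives $w_e^-\equiv 0$, i.e.\ $w_e\geq 0$ on all of $\Sigma_1\times(0,\infty)$, and then the strong maximum principle upgrades this to $w_e>0$ there. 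Hence $\hat w_e>2\eta>0$ on $\overline G\times[1,2]$ for some $\eta>0$, and $\hat w_e^-\equiv 0$, so the hypothesis~\eqref{obs0} of Proposition~\ref{polaciku} holds at $\tau=1$ with infinite margin. Continuity in $e'$ (together with $(U_1)$ for the uniform-in-space part) then gives an $\varepsilon$-neighbourhood in $\S\cap P$ on which both conditions persist. No large time $\tau_0$, no balancing of decay rates, and no subsolution $\varphi$ are needed for this particular lemma; the subsolution machinery is what Proposition~\ref{polaciku} itself uses internally, not what is required to verify its hypotheses here. If you replace your step (1) with the global application of the weak maximum principle and then take $\tau_0=1$, the rest of your outline (continuity in $e'$, then Proposition~\ref{polaciku}, then passage to the limit to get $z_{e'}\geq 0$) goes through exactly as in the paper.
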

\begin{proof} 
Due to rotational invariance, we can assume without loss of generality that $e=e_1=(1,0,\ldots,0)$ and $P=\{(x_1,0,\ldots,0,x_N)\mid x_1,x_N\in \mathbb R\}.$  By Lemma~\ref{weakmp},
\begin{align*}
e^{-mt} w_e^{-}(x,t)\leq \sup_{(y,s)\in \partial_P (\Sigma_1\times(0,\infty))} e^{-ms} w_e^{-}(y,s)=\sup_{\Sigma_1} w_e^{-}(\cdot,0)=0,
\end{align*}
for all $(x,t)$ in $\Sigma_1\times(0,\infty)$ and with $m:=\sup_{\Sigma_1\times(0,\infty)}c^e$.  Then $w_e\geq 0$ in $\Sigma_1\times(0,\infty)$ and, since $w_e(x,0)\not\equiv 0,$  the strong maximum principle (see, for example, \cite[Theorem 5 on page 173 and Remark 2 on page 175]{protter}) implies  that
\begin{align}\label{obs4}
w_e(x,t)>0\quad  \text{ in }\Sigma_1\times (0,\infty).
\end{align}

By~\eqref{obs4}, there is $\eta>0$ such that $\hat w_e>2\eta>0$ in $\overline{G}\times [1, 2],$ where $\hat w_e$ is as in Lemma~\ref{T:l}.  Then, by continuity and $(U_1)$, 
there exists $\varepsilon>0$ such that any $e'\in \S\bigcap P$ with $|e-e'|<\varepsilon$ satisfies that
\begin{align*}
 \hat w_{e'}> \eta>0\quad \text{ in }\overline{G}\times [1, 2],\qquad 
||\hat w_{e'}^-||_{L^\infty(\Sigma(e_1)\times (1,2))}\leq \mu\eta\leq \mu ||\hat w_{e'}||_{L^\infty(G\times (1, 2))},
\end{align*}
where $\mu$ is given by Proposition~\ref{polaciku}. Then, by Proposition~\ref{polaciku}, $\lim_{t\to \infty}||\hat w_{e'}^-(\cdot,t)||_{L^\infty(\Sigma(e'))}= 0,$ that is, $e'\in \cM$.
\end{proof}

\begin{lemma}\label{lobolema}
Let $e\in {\cal M}$ and let $P\subset \mathbb R^N$ be a two dimensional subspace with $e\in P.$ If there is $\tilde z\in\omega(u)$ such that $\tilde z_e\not\equiv 0,$ then there exists 
$\varepsilon>0$ such that $e'\in{\cal M}$ for all $e'\in \S\bigcap P$ with $|e-e'|<\varepsilon.$
\end{lemma}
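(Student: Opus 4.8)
The plan is to mimic the proof of Lemma~\ref{teopoho}, replacing the role of assumption $(U_0)$ (which gave strict positivity of $w_e$ on all of $\Sigma_1\times(0,\infty)$) by the weaker hypothesis that some $\tilde z\in\omega(u)$ has $\tilde z_e\not\equiv 0$. Since $e\in\cM$, we already know $\tilde z_e\ge 0$ in $\Sigma_1$, so $\tilde z_e\gneqq 0$; the goal is to turn this into a uniform positivity statement for the modified function $\hat w_e$ on $\overline G\times[\tau,\tau+1)$ for some time $\tau$, together with the smallness estimate \eqref{obs0}, and then invoke Proposition~\ref{polaciku} followed by a continuity/$(U_1)$ argument exactly as at the end of the proof of Lemma~\ref{teopoho}.

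First I would fix a sequence $t_n\to\infty$ with $u(\cdot,t_n)\to\tilde z$ in $C_0(\overline\Sigma)$. Because $e\in\cM$, all time-slices $w_e(\cdot,t)\ge 0$ on $\partial_p(\Sigma_1\times(0,\infty))$ in the appropriate sense and $w_e\ge 0$ throughout (this needs a short argument: for $e\in\cM$ one has $z_e\ge0$ for every $z\in\omega(u)$, and combined with \eqref{omega:unif} and the linear parabolic equation \eqref{Pie} one deduces $w_e\ge 0$ in $\Sigma_1\times(t_0,\infty)$ for $t_0$ large — alternatively this may already be available from the surrounding text; if $w_e$ could be negative on a bounded set, one can still run the weak maximum principle on $\Sigma_1\setminus G$ using \eqref{rho1} to control it). Since $\tilde z_e\gneqq 0$ and $\tilde z_e$ solves (in the limit) a linear equation of the form \eqref{Pie} with coefficients bounded on bounded sets by \eqref{beta0}, the strong maximum principle and Hopf's lemma force $\tilde z_e>0$ in $\Sigma_1$ up to the boundary behavior controlled by the factor $g\,h$; passing to $\hat z_e:=\tilde z_e/(g(|x|)h(x_1))$ as in Lemma~\ref{T:l}, we get $\hat z_e>0$ on the compact set $\overline G$, hence $\hat z_e\ge 2\eta>0$ on $\overline G$ for some $\eta>0$. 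Using the convergence $u(\cdot,t_n)\to\tilde z$, parabolic regularity (Lemma~\ref{regularity:lemma}) and the smoothing estimate \eqref{omega:unif}, for $n$ large the shifted functions $w_e(\cdot,t_n+\cdot)$ are close to the solution with data $\tilde z_e$ on $\overline G\times[0,2]$; this yields a time $\tau=t_n$ (or $\tau=t_n+1$) with $\hat w_e>\eta>0$ on $\overline G\times[\tau,\tau+1)$.

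Next I would establish the smallness of $\hat w_e^-(\cdot,\tau)$ on $\Sigma_1\setminus G$. Here I use the uniform decay $(U_1)$ (so $w_e$ is uniformly small outside a large ball, uniformly in $t$) together with the weak maximum principle Lemma~\ref{weakmp} applied on the region $(\Sigma_1\setminus G)$, where by \eqref{rho1}/\eqref{G} one has $\hat c^e<-\gamma$: this gives exponential-in-time decay of $\hat w_e^-$ from the parabolic boundary at time $\tau$. Since $\tilde z_e\ge 0$, for $\tau=t_n$ with $n$ large the boundary datum $\hat w_e^-(\cdot,\tau)$ on the inner part of $\partial G$ is as small as we like (it converges to $\hat z_e^-=0$ there), and the part near spatial infinity is controlled by $(U_1)$; combining, $\|\hat w_e^-(\cdot,\tau)\|_{L^\infty(\Sigma_1\setminus G)}\le\mu\|\hat w_e\|_{L^\infty(G\times(\tau+1/8,\tau+1/4))}$, which is exactly \eqref{obs0}. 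Then Proposition~\ref{polaciku} gives $w_e>0$ in $\overline G\times[\tau,\infty)$ and $\|w_e^-(\cdot,t)\|_{L^\infty(\Sigma_1)}\to 0$, i.e. $e\in\cM$ in the strong sense, and in particular $\hat w_e>2\eta'$ on $\overline G\times[\tau+1,\tau+2]$ for some $\eta'>0$. Finally, exactly as in the last paragraph of the proof of Lemma~\ref{teopoho}, continuity of $e'\mapsto\hat w_{e'}$ on compact sets together with $(U_1)$ produces $\varepsilon>0$ so that every $e'\in\S\cap P$ with $|e-e'|<\varepsilon$ satisfies the hypotheses of Proposition~\ref{polaciku} at time $\tau+1$, hence $e'\in\cM$.

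The main obstacle I expect is the passage from ``$\tilde z_e\not\equiv 0$ for one element of $\omega(u)$'' to a strict lower bound for $\hat w_e$ on $\overline G$ over a whole unit time interval $[\tau,\tau+1)$: the limit profile $\tilde z$ only controls $u$ along the sequence $t_n$, so I must combine the strong maximum principle applied to the limiting linear equation with a continuity-in-initial-data argument for \eqref{Pie} on the fixed bounded set $U\supset\supset G$ over the time interval $[0,2]$, using the uniform coefficient bounds \eqref{beta0} and the equicontinuity from Lemma~\ref{regularity:lemma}. A secondary delicate point is justifying that $w_e\ge 0$ on the parabolic boundary of $\Sigma_1\setminus G$ at time $\tau$ with the required quantitative smallness of its negative part, which is where $(U_1)$ and the negative-drift region \eqref{G} must be used carefully and uniformly in $e$ near $e$.
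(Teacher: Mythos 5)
Your overall architecture is right (positivity of $\hat w_e$ on $\overline G$ over a time interval, smallness of the negative part, then Proposition~\ref{polaciku} and a continuity argument), but the mechanism you propose for establishing positivity on $\overline G$ has a genuine gap, and it is also not what the paper does.

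The problematic step is ``$\tilde z_e$ solves (in the limit) a linear equation \dots the strong maximum principle and Hopf's lemma force $\tilde z_e>0$ in $\Sigma_1$.'' A limit profile $\tilde z\in\omega(u)$ is not a solution of any fixed elliptic or parabolic equation: the linearized coefficient $c^e(x,t)$ depends on $t$, so there is no well-defined ``limiting equation'' whose solution has initial datum $\tilde z_e$. Even if you pass to a further subsequence to extract a limiting parabolic equation on $[0,2]$ with data $\tilde z_e$, the strong maximum principle and Hopf's lemma would give positivity of the forward-in-time evolution, not of the datum $\tilde z_e$ itself, so that route needs a nontrivial additional argument. Your follow-up ``continuity-in-initial-data'' step then inherits the same difficulty, because there is no fixed solution with data $\tilde z_e$ to be continuous to. (The paper \emph{does} prove $\tilde z_e>0$ in $\Sigma$ in Lemma~\ref{lemma3}, but via the Harnack inequality and a limit along the sequence $t_n$, not via a strong maximum principle on a limiting equation, and that lemma is stated and proved only for the weak-stability setting $(f_1)$, $(f_2)$ — it is not used in the proof of Lemma~\ref{lobolema}.)

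The paper's proof of Lemma~\ref{lobolema} sidesteps all of this: since $\tilde z_e\not\equiv 0$, one picks a single point $x_0\in G$ (after possibly enlarging $\rho_1$ or shrinking $\delta$) and a constant $\alpha>0$ with $\tilde z_e(x_0)\ge 4\alpha$; pointwise convergence $u(\cdot,t_n)\to\tilde z$ gives $\hat w_e(x_0,t_n)>\alpha$ for large $n$. Because $e\in\cM$ (so $z_e\ge 0$ for \emph{all} $z\in\omega(u)$) and $\operatorname{dist}_{C_0}(u(\cdot,t),\omega(u))\to 0$, the quantity $\|\hat w_e^-(\cdot,t)\|_{L^\infty(\Sigma_1)}$ is uniformly small for all $t>T$, not just along $t_n$. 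Then the Harnack inequality (Lemma~\ref{harnack:inequality}) spreads the pointwise lower bound at $x_0$ into a lower bound $\eta=\kappa\alpha/2$ for $\hat w_e$ on $G\times(\tau+3,\tau+4)$, and the continuity/$(U_1)$ step finishes exactly as you describe. So the key tool is the Harnack inequality applied directly to the non-autonomous linear equation, not a strong maximum principle for a limiting equation. You should replace your positivity step by this one-point plus Harnack argument, and you should use the full strength of $e\in\cM$ (nonnegativity of $z_e$ for all $z\in\omega(u)$, together with the uniform $\omega$-limit convergence) to obtain the uniform-in-time smallness of $\hat w_e^-$, rather than relying only on convergence along the subsequence $t_n$.
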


\begin{proof}  
Recall that $U$ and $G$ were already defined at the beginning of the section. 
As above, we can assume without loss of generality that $e=e_1=(1,0,\ldots,0)$ and $P=\{(x_1,0,\ldots,0,x_N)\mid x_1,x_N\in \mathbb R\}$ 
and we use the same notation as in  Lemma~\ref{teopoho}.

 Since, by assumption, $\tilde z_e\not\equiv 0$, there is $\alpha>0$ and $x_0\in\Sigma_1$ such that $\tilde z_e(x_0)\geq 4\alpha>0.$ Without loss of generality we can assume that $x_0\in G=G(\rho_1,\delta)$ (making $\rho_1$ bigger or $\delta$ smaller if necessary). Fix $\{t_n\}_{n=0}^\infty\subset \mathbb R$ so that $\lim\limits_{n\to\infty}u(x,t_n)=\tilde z(x)$ for all $x\in\mathbb R^N.$  

There exists  ${\cal N}\in\mathbb N$ such that
\begin{align*}
 \hat w_e(x_0,t_n)&>\alpha\qquad \text{for all $n\geq {\cal N}$}.
\end{align*}

Since $z_e\geq 0$ in $\Sigma_1$ for all $z\in\omega(u)$ and $\lim\limits_{t\to\infty}\operatorname{dist}_{C_0(\mathbb R^N)}(\omega(u), u(\cdot,t))=0$, there is $T>0$ such that
\begin{align*}
 ||\hat w^-_{e}(x,t)||_{L^\infty(\Sigma_1)}<\frac{\mu\kappa \alpha}{8}e^{-4 \hat\beta_U}\qquad \text{ for all $t>T$},
\end{align*}
where $\kappa=\kappa(N,\operatorname{diam}(U),\hat\beta_U,\operatorname{dist}(U,G),1)>0,$ is given by Lemma~\ref{harnack:inequality}, $\mu$ as in Proposition~\ref{polaciku}, and $\hat\beta_U$ as in \eqref{hbeta}. 

Set $\tau:=t_{n_0}-1,$ with $n_0\geq {\cal N}$ and $t_{n_0}\geq T.$  Then, by Lemma~\ref{harnack:inequality},
\begin{align*}
 \inf_{G\times (\tau+3,\tau+4)}\hat w_e(x,t)&\geq \kappa ||\hat w_e^+||_{L^\infty(D\times(\tau+1,\tau+2))}-e^{4\hat\beta_U}\sup_{\partial_P(U\times(\tau,\tau+4))}\hat w_e^-,\\
&\geq \kappa\alpha-\frac{\mu\kappa \alpha}{8}\geq\frac{\kappa \alpha}{2}=:\eta>0.
\end{align*}
Then, by continuity and uniform decay (see $(U_1)$), there is $\varepsilon>0$ such that, for all $e'\in \S\bigcap P$ with $|e-e'|<\varepsilon,$
\begin{align*}
 ||\hat w_e(x,t)-\hat w_{e'}(x,t)||_{L^\infty(\Sigma_1)}<\frac{\eta\mu}{4}\quad  \text{ for all }t\in[\tau,\tau+4].
\end{align*}
Hence, if $\tilde \tau:=\tau+3,$
\begin{align*}
\inf_{D\times (\tilde \tau,\tilde \tau+1)}\hat w_{e'}(x,t)&\geq\frac{\eta}{2}>0,\\
||\hat w^-_{e'}(x,\tilde \tau)||_{L^\infty(\Sigma_1)}
&\leq ||\hat w^-_e(x,\tilde \tau)-\hat w^-_{e'}(x,\tilde \tau)||_{L^\infty(\Sigma_1)}+||\hat w^-_{e}(x,\tilde \tau)||_{L^\infty(\Sigma_1)}\\
&< \frac{\eta\mu}{4}+\frac{\mu\kappa \alpha}{8}e^{-4\hat\beta_U } < \frac{\mu\eta}{4}+\frac{\mu\eta}{4}=\mu\frac{\eta}{2}\leq \mu ||\hat w^+_{e'}(x,t)||_{L^\infty({D\times (\tilde \tau,\tilde \tau+1)})}.
\end{align*}
In particular, the hypotheses of Proposition~\ref{polaciku} are satisfied, and we have that $\lim_{t\to \infty}||\hat w^-_{e'}(x,t)||_{L^\infty(\Sigma_1)}=0,$ which yields the result.
\end{proof}

\begin{proof}[Proof of Theorem~\ref{strong:thm}]
  Let $e\in \S$ be as in $(U_0)$. Then, by Lemma~\ref{teopoho}, there is some $\varepsilon>0$ such that 
\begin{align}
e'\in {\cal M}\qquad \text{ for all }|e'-e|<\varepsilon.\label{Mopen}
\end{align}

 Let $P$ be any $2D$-plane that contains $e$ and the origin. Without loss of generality, we may assume that $e=(1,0,\ldots,0)$ and $ P = \{ x=(x_1,x_2,0,\ldots,0) \mid x_1 , x_2 \in \mathbb R \}.$  Define
  \begin{eqnarray*}
  e_\theta&:=&(\cos(\theta),\sin(\theta),0,\ldots,0),\\
  w_{\theta}(x,t)&:=&w_{e_\theta}(x,t),\ \  (x,t)\in \Sigma_1\times(0,\infty),\\
 z_{\theta}(x)&:=&z_{e_\theta}(x),\ \  x\in \Sigma_1,
   \end{eqnarray*}
  for $\theta \in [-\pi,\pi],$ and
  \begin{eqnarray*}
 \Theta_1&:=&\sup\{\theta\in[0,\pi) \mid e_\phi\in {\cal M} \text{ for all } 0\leq \phi\leq \theta \},\\
 \Theta_2&:=&\inf\{\theta\in(-\pi,0] \mid e_\phi\in {\cal M} \text{ for all } \theta \leq \phi\leq 0 \}.
   \end{eqnarray*}
  
 We show that the assumptions of  Proposition~\ref{sec:char-foli-schw-1} are satisfied.  Clearly $e_{\Theta_1},e_{\Theta_2}\in {\cal M}\cap P$ and, by Lemma~\ref{lobolema}, necessarily $z_{\Theta_1}\equiv z_{\Theta_2}\equiv 0$ for all $z\in\omega(u).$  Then, $H(e_{\Theta_1})$ and $H(e_{\Theta_2})$ are symmetry hyperplanes for all  elements in $\omega(u)$ and, by~\eqref{Mopen}, $\Theta_2<\Theta_1$.
 
 If $\Theta_2 \neq -\pi $ or $\Theta_1 \neq \pi$, then $e_{\Theta_1}\neq e_{\Theta_2}$ and $e_\varphi\in\{e\in \S\mid z_e \geq 0 \text{ in } \Sigma_1\}$ for all $\varphi\in(\Theta_2,\Theta_1)$ by the definition of ${\cal M}.$   In this case, let $p_1=e_{\Theta_1}$ and $p_2=e_{\Theta_2}$.
 
 If $\Theta_2 = -\pi $ and $\Theta_1 = \pi$, then 
(since $z_e=-z_{-e}$) we have that $z_{e_\varphi} \equiv 0$ for all $\varphi\in(-\pi,\pi)$ and for all $z\in\omega(u)$. In this case, let $p_1=e_{\pi/2}$ and $p_2=e_{-\pi/2}$.

Since $P$ is arbitrarily chosen, the claim now follows from Proposition~\ref{sec:char-foli-schw-1} using these choices of $p_1$ and $p_2$.
\end{proof}

\begin{proof}[Proof of Theorem \ref{thm1}]
Let $\Sigma,$ $\eta,$ $p$, $a,$ $b$, $\alpha$, and $\beta$ as in the statement.  We begin by showing that $u$ satisfies the uniform decay condition $(U_1)$ in $\Sigma\times(1,\infty)$.  Let $0<\gamma <\min\{\beta,1\},$ 
\begin{align}\label{r1}
 r_1>\max\left\{
 \left(\frac{2}{\eta}(\gamma^2+1)\right)^\frac{1}{\beta-\gamma}
 ,\left(\frac{2}{\eta}
\|a\|_\infty\|u\|_\infty^{p-1}
\right)^\frac{1}{\beta-\alpha}
 ,1\right\},
\end{align}
 and $v(x,t):=\|u\|_\infty e^{r_1^\gamma-|x|^\gamma t}$.  Then, 
$v_t-\Delta v + (b(t)|x|^\beta-a(t)|x|^\alpha|u|^{p-1}) v=c(x,t)v$,  where, for $(x,t)\in \Sigma\backslash B_{r_1}(0)\times (0,1)$,
\begin{align*}
 c(t,x)&:=-|x|^{\gamma }
-\gamma^2 t^2 |x|^{2\gamma-2}
+\gamma  t (\gamma +N-2)|x|^{\gamma-2}
+b(t)|x|^{\beta}-a(t)|x|^\alpha|u|^{p-1}
\\
&\geq 
|x|^{\beta}
\left(
\eta
-\frac{\gamma^2}{r_1^{\beta+2(1-\gamma)}}
-\frac{1}{r_1^{\beta-\gamma}}
-\frac{\|a\|_\infty\|u\|_{\infty}^{p-1}}{r_1^{\beta-\alpha}}
\right)\geq  
|x|^{\beta}
\left(
\eta
-\frac{\gamma^2+1}{r_1^{\beta-\gamma}}
-\frac{\|a\|_\infty\|u\|_{\infty}^{p-1}}{r_1^{\beta-\alpha}}
\right)>0,
\end{align*}
by \eqref{r1}.  Note that 
\begin{align}\label{ub}
|u(t,x)|\leq \|u\|_\infty
\leq \|u\|_\infty e^{r_1^\gamma -|x|^\gamma t}=v(x,t)
\quad \text{ in $(\overline{B_{r_1}(0)\cap \Sigma})\times[0,1].$}
\end{align}
As a consequence, $\widetilde v:=v-u$ satisfies that
$\widetilde v_t-\Delta \widetilde v +(b(t)|x|^\beta -a(t)|x|^\alpha|u|^{p-1})\widetilde v
=c\, v >0$ in $(\Sigma\backslash B_{r_1}(0))\times(0,1]$ and $\widetilde v\geq 0$ on the parabolic boundary $\partial_P(\Sigma\backslash B_{r_1}(0)\times (0,1))$.  Then, by the maximum principle \cite[Proposition~52.4]{QS07}, $\widetilde v>0$ in $\Sigma\backslash B_{r_1}(0)\times (0,1)$, and by \eqref{ub}, $\widetilde v \geq 0$ in $\Sigma\times[0,1]$.  Arguing similarly with $\widetilde v=v+u$ yields that
\begin{align}\label{s1}
 |u(x,t)|\leq  \|u\|_\infty e^{r_1^\gamma}e^{-|x|^\gamma t}\quad \text{ in }\Sigma\times[0,1].
\end{align}
For $(x,t)\in (\Sigma\backslash B_{r_1})\times[1,\infty)$, let $\widehat v(x,t) := v(x,1)-u(x,t)$. Then, for $(x,t)\in (\Sigma\backslash B_{r_1})\times[1,\infty)$,
\begin{align*}
\widehat v_t-\Delta \widehat v + (b(t)|x|^\beta -a(t)|x|^\alpha|u|^{p-1})\widehat v
&=|x|^{\beta}
\left(
-\frac{\gamma^2}{|x|^{\beta+2(1-\gamma)}}
+\frac{\gamma(\gamma+N-2)}{|x|^{\beta-\gamma+2}}
+b(t)
-\frac{a(t)|u|^{p-1}}{|x|^{\beta-\alpha}}
\right)v(x,1)\\
&\geq |x|^{\beta}\left(
\eta-\frac{\gamma^2}{r_1^{\beta-\gamma}}
-\frac{\|a\|_\infty\|u\|_\infty^{p-1}}{r_1^{\beta-\alpha}}
\right)v(x,1)>0,
\end{align*}
by \eqref{r1}.  On the other hand, 
\begin{align}\label{b1}
|u(t,x)|\leq \|u\|_\infty\leq  \|u\|_\infty e^{r_1^\gamma-|x|^\gamma}=v(x,1)\quad \text{ in }(\overline{B_{r_1}(0)\cap\Sigma})\times[1,\infty).   
\end{align}
By \eqref{s1} and \eqref{b1}, $\widehat v>0$ on $\partial_P((\Sigma\backslash B_{r_1}(0))\times (1,\infty))$; and then, by the maximum principle \cite[Proposition~52.4]{QS07} and \eqref{b1}, $\widehat v>0$ in $\Sigma\times(1,\infty)$. Arguing similarly with $\widehat v=v+u$ yields that
\begin{align*}
 |u(x,t)|\leq  Me^{-|x|^\gamma}\quad \text{ in }\Sigma\times[1,\infty).
\end{align*}
As a consequence, $u$ satisfies the uniform decay assumption $(U_1)$ in $\Sigma\times(1,\infty)$. Note also that, by the assumption \eqref{U0intro}, the linearization procedure \eqref{Pie}, and the maximum principle \cite[Proposition 52.4]{QS07}, one has that $(U_0)$ is satisfied by $u(\cdot,t)$ for all $t>0$; in particular, $u(\cdot,1)$ satisfies $(U_0)$ and $(U_1)$. 

Then, by Theorem \ref{strong:thm}, it suffices to show that $(f_0)$, $(f_1)'$, and $(f_2)'$ are satisfied. Let 
 \begin{align*}
f(t,r,u):=a(t)r^\alpha|u|^{p-1}u-b(t)r^\beta u,\qquad 
f_u(t,r,u):=p\, a(t)r^\alpha|u|^{p-1}-b(t)r^\beta.
 \end{align*}
Then $f(t,s,0)=0$ and $(f_0)$ is satisfied. Assumption $(f_1)'$ also holds, because for $K>0$ and $k>1$,
 \begin{align*}
\sup_{r\in [0,k], t>0, u\in [-K,K]}|f_u(t,r,u)|
\leq pK^{p-1}k^\alpha\|a\|_\infty+k^\beta\|b\|_\infty =: C,
 \end{align*}
 where $C>0$ depends only on $K$, $k$, and the fixed data of the problem $p$, $\alpha$, $\|a\|_\infty$, $\|b\|_\infty$, and $\beta$. 

 To prove $(f_2)'$, note that for any $M > 0$
\begin{align*}
\max_{r\in [1,2],u\in[-M,M]}
 |f_u(t,r,u)|
 \leq 2^\beta \max\{\|a\|_\infty,\|b\|_\infty\} M^{p-1}=:C_M.
\end{align*}
Define $\eps:=\left(\frac{\eta}{p\max\{\|a\|_\infty,1\}}\right)^\frac{1}{p-1}$, and let $\rho_M>1$ be such that $\eta(\rho_M^\alpha-\rho_M^\beta)\leq -C_M-4\lambda_1$. Then, since $\alpha < \beta$
\begin{align*}
\max_{r>\rho_M,u\in[-\eps,\eps]}f_u(t,r,u)
&=\max_{r>\rho_M,u\in[-\eps,\eps]}p|u|^{p-1}a(t)r^\alpha-b(t)r^\beta\\
&\leq \max_{r>\rho_M,u\in[-\eps,\eps]}p\eps^{p-1}\|a\|_\infty r^\alpha-\eta r^\beta \leq \eta\max_{r>\rho_M}(r^\alpha- r^\beta)=\eta(\rho_M^\alpha-\rho_M^\beta)\\
&\leq-C_M-4\lambda_1
<-\max_{r\in [1,2], u\in[-M,M]}|f_u(t,r,u)|-4\lambda_1,
\end{align*}
and $(f_2)'$ holds. The result now follows from Theorem \ref{strong:thm}.
\end{proof}

\subsection{Weak stability outside compact sets}

Let $u$ be a classical solution of~\eqref{model} and assume that $(U_0)$--$(U_2)$, $(f_0)$, and $(f_1)$ from Theorem~\ref{unbounded} hold.  For any $e \in \S$, define $w_e$ as in Section~\ref{linearization}, let $z_e$ be as in~\eqref{ze:def}, and $\cM$ as in~\eqref{cM:def}.  

\begin{lemma}\label{lemma3}
If $e \in \cM$ and $\bar z\not\equiv 0$ for some $\bar z\in \omega(u),$ then $\bar z_e>0$ in $\Sigma.$  
\end{lemma}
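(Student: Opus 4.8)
The plan is to realize $\bar z_e$ as the slice at $t=0$ of a nonnegative \emph{entire} solution of a limiting linear parabolic equation on $\Sigma_1\times\R$ and then to apply the strong maximum principle; the crucial point is that on a cylinder without an initial time, nonnegativity forces strict positivity. The starting observation is the content of $e\in\cM$: since $z_e\ge 0$ in $\Sigma_1$ for every $z\in\omega(u)$ and, by~\eqref{omega:unif}, $u(\cdot,t)$ converges to $\omega(u)$ uniformly on $\overline\Sigma$, the negative part of the linearized function decays, $\|w_e^-(\cdot,t)\|_{L^\infty(\Sigma_1)}\to 0$ as $t\to\infty$, where $w_e$ is the solution of~\eqref{Pie} and $c^e$ is bounded on bounded subsets of $\Sigma_1\times(0,\infty)$ uniformly in time by~\eqref{beta0}.

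Next I would fix $t_n\to\infty$ with $u(\cdot,t_n)\to\bar z$ and translate in time by setting $v_n(x,t):=w_e(x,t+t_n)$, so that $v_n$ solves $\partial_t v_n-\Delta v_n=c^e(\cdot,\cdot+t_n)\,v_n$ in $\Sigma_1\times(-t_n,\infty)$ with homogeneous Dirichlet data on $\partial\Sigma_1$. Lemma~\ref{regularity:lemma}, applied to $u$ and transported by the isometries $\Gamma_e,\sigma_e$ (which fix the origin), bounds $w_e$ in $C^{1+\tilde\gamma,(1+\tilde\gamma)/2}$ on $\overline{B_{R_1}(0)\cap\Sigma_1}\times[s,s+2]$ uniformly in $s>2$, for every $R_1>R$; hence the $v_n$ are locally equicontinuous on $\overline\Sigma_1\times\R$, and after a diagonal extraction one has $v_n\to v$ in $C^1_{loc}(\overline\Sigma_1\times\R)$, while $c^e(\cdot,\cdot+t_n)\rightharpoonup c^\ast$ weakly-$\ast$ in $L^\infty_{loc}$ along a further subsequence, with $c^\ast$ still locally bounded. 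Passing to the limit in the distributional formulation and using interior $L^p$-estimates, $v\in W^{2,1}_{p,loc}$ for all $p<\infty$ is a strong solution of $\partial_t v-\Delta v=c^\ast v$ in $\Sigma_1\times\R$ with $v=0$ on $\partial\Sigma_1\times\R$.

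Then I would identify $v$ and conclude. From $v_n(x,0)=w_e(x,t_n)=u(\Gamma_e(x),t_n)-u(\sigma_e(\Gamma_e(x)),t_n)\to\bar z_e(x)$ one gets $v(\cdot,0)=\bar z_e$; since $t+t_n\to\infty$ uniformly on bounded $t$-intervals, $v_n^-(\cdot,t)=w_e^-(\cdot,t+t_n)\to 0$ locally uniformly, so $v\ge 0$ on $\Sigma_1\times\R$; and $v\not\equiv 0$ since $\bar z_e\not\equiv 0$. If $\bar z_e(x_0)=0$ for some $x_0\in\Sigma_1$, i.e.\ $v(x_0,0)=0$, then on any bounded $U_0$ with $x_0\in U_0\subset\subset\Sigma_1$, after multiplying $v$ by $e^{-\lambda t}$ with $\lambda=\sup_{U_0\times(-1,1)}|c^\ast|$ so that the zeroth-order term becomes nonpositive, the parabolic strong maximum principle (equivalently, Lemma~\ref{harnack:inequality}, or \cite{protter}) forces $v\equiv 0$ on $U_0\times(-1,0]$; letting $U_0\uparrow\Sigma_1$ yields $v\equiv 0$ on $\Sigma_1\times(-1,0]$, hence $\bar z_e=v(\cdot,0)\equiv 0$, a contradiction. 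Therefore $\bar z_e>0$ in $\Sigma_1$.

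The step I expect to be most delicate is the passage to the limit: one must check that the limiting $v$ solves a genuine linear parabolic equation even though $c^e$ is merely in $L^\infty$ and need not converge pointwise, and that the homogeneous Dirichlet condition and the sign $v\ge 0$ both survive the limit uniformly up to $\partial\Sigma_1$ --- precisely where the time-uniform regularity of Lemma~\ref{regularity:lemma} and the translation invariance of the standing hypotheses $(U_1)$, $(U_2)$, $(f_1)$ are used. Conceptually, the decisive point is that strict positivity of $\bar z_e$ cannot be obtained from the single time $t=0$ alone; passing to an entire solution provides times before $t=0$ where the strong maximum principle can propagate a zero backwards.
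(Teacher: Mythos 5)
Your proof is correct, but it takes a genuinely different route from the paper's. The paper argues directly with the sign-changing Harnack inequality (Lemma~\ref{harnack:inequality}): starting from $\bar z_e(x_0)\geq 4\alpha$ and the uniform equicontinuity of Lemma~\ref{regularity:lemma}, it shows $w_e(x_0,\cdot)>\alpha$ on a fixed-length time window $[t_n-4\theta,t_n]$, applies the Harnack estimate to $w_e$ on $D\times(t_n-4\theta,t_n)$ for any $D\subset\subset\Sigma_1$ containing $x_0$ to get $w_e(\cdot,t_n)\geq \kappa_D\alpha - e^{4m\theta}\sup w_e^-$ on $D$, and then lets $n\to\infty$: the correction term vanishes (since $\|w_e^-(\cdot,t)\|_{L^\infty}\to 0$), while $\kappa_D$ is independent of $n$, so $\bar z_e\geq\kappa_D\alpha>0$ on each $D$. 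Your argument instead time-translates, extracts (by equicontinuity and a diagonal argument) a nonnegative entire solution $v$ of a frozen-coefficient equation on $\Sigma_1\times\R$ with $v(\cdot,0)=\bar z_e$, and invokes the parabolic strong maximum principle to propagate a hypothetical interior zero of $\bar z_e$ backward and force $v\equiv 0$. Both routes consume the same inputs (the regularity of Lemma~\ref{regularity:lemma}, the local boundedness~\eqref{beta0}, and $\|w_e^-\|\to 0$), but the paper's version is more elementary and quantitative --- it avoids passing to a limit equation and the attendant weak-$*$ convergence of the coefficients $c^e(\cdot,\cdot+t_n)$ --- whereas yours is a cleaner conceptual argument that replaces the Harnack inequality with the strong maximum principle on an entire solution, at the cost of the compactness extraction and the bookkeeping needed to justify that $v$ solves a bona fide parabolic equation. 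Either is acceptable; your mechanism for why the claim is a strict inequality (propagating a zero backward in time through the limit flow) is a valid and standard alternative to the paper's mechanism (the Harnack lower bound surviving the limit $n\to\infty$).
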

\begin{proof}
Let $\{t_n\}\subset [0,\infty)$  such that $t_n\to\infty$ as $n\to \infty$ and $u(x,t_n)\to \bar z(x)$ for all $x\in\mathbb R^N.$ By hypothesis there exist $x_0\in\Sigma_1$ and $\alpha>0$ such that $\bar z_e \geq 4\alpha$. Then  there is ${\cal N}\in\mathbb N$ such that
\begin{equation*}
 w_e(x_0,t_n)>2\alpha>0,\hspace{1cm} \text{ for all } n\geq \cal N.
\end{equation*}
By Lemma~\ref{regularity:lemma}, there exists $\theta>0$ such that
\begin{equation*}
 w_e(x_0,s)>\alpha,\hspace{1cm} \text{ for all } s\in[t_n-4\theta,t_n]  \text{ and } n\geq {\cal N}.
\end{equation*}
Fix $D$ and $U$ such that $x_0\in D\subset\subset U\subset\subset \Sigma.$  By Lemma~\ref{harnack:inequality},
\begin{align}
 w_e(x,t_n) &\geq \kappa ||w_e^+||_{L^\infty(D\times(t_n-3\theta,t_n-2\theta))}-e^{4m\theta}\sup_{\partial_P(U\times(t_n-4\theta,t_n))}w_e^-\nonumber\\
&> \kappa \alpha - e^{4m\theta}\sup_{\partial_P(U\times(t_n-4\theta,t_n))}w_e^-\label{ref1}
\end{align}
for $x\in D$ and $n\geq \cal N,$ where $m=\sup_{U\times(t_n-4\theta,t_n)} c^e$.  Since $z_e\geq 0$ in $\Sigma_1$ for all $z\in\omega(u)$, then
$||w_e^-(\cdot,t)||_{L^\infty(\Sigma)}\to 0$ as $t\to\infty.$  Therefore, passing  $n\to\infty$ in~\eqref{ref1} yields $\bar z_e(x) \geq \kappa\alpha>0$ for all $x\in D.$ Since $D$ can be arbitrarily big (with $\kappa > 0$ depending on $D$),  the result follows.
\end{proof}

\begin{lemma}\label{lemma4}
Fix any $e \in \S$ and suppose that $z_e>0$ in $\Sigma_1$ for all $z\in\omega(u).$ Then, there is $\varepsilon>0$ such that $z_{e'}>0$ in $\Sigma_1$ for all $z\in\omega(u)$ and $e'\in \S$ with $|e'-e|<\varepsilon.$
\end{lemma}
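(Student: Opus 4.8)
The plan is to reduce the statement to Lemma~\ref{lemma3}. First observe that the hypothesis forces $0\notin\omega(u)$: the zero function would give $0_e\equiv 0$, contradicting $z_e>0$ in $\Sigma_1$. Hence every $\bar z\in\omega(u)$ satisfies $\bar z\not\equiv 0$, and it suffices to prove that every $e'\in\S$ sufficiently close to $e$ belongs to $\cM$, i.e.\ $z_{e'}\ge 0$ in $\Sigma_1$ for all $z\in\omega(u)$; Lemma~\ref{lemma3}, applied to each $\bar z\in\omega(u)$, then upgrades this to $\bar z_{e'}>0$ in $\Sigma_1$, which is the claim.

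To show $e'\in\cM$ for $e'$ near $e$, let $G$ be the bounded set from \eqref{G}, so that $\overline G$ is a compact subset of $\Sigma_1$. Since $\omega(u)$ is compact in $C_0(\overline\Sigma)$ and $(z,x)\mapsto z_e(x)$ is continuous, the hypothesis gives $2m_0:=\min\{z_e(x):z\in\omega(u),\ x\in\overline G\}>0$; combined with \eqref{omega:unif} this yields $T_0>2$ with $w_e\ge m_0$ on $\overline G\times[T_0,\infty)$. Choosing the rotation family $e\mapsto\Gamma_e$ to be Lipschitz near $e$ and using the uniform-in-$t\ge 2$ local equicontinuity of $u$ (Lemma~\ref{regularity:lemma}) to bound $\|w_{e'}(\cdot,t)-w_e(\cdot,t)\|_{L^\infty(\overline G)}$ (the relevant arguments of $u$ staying in a fixed compact subset of $\overline\Sigma$), we obtain $\varepsilon>0$ such that $w_{e'}\ge m_0/2$ on $\overline G\times[T_0,\infty)$ whenever $|e'-e|<\varepsilon$. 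By Lemma~\ref{T:l} (in particular $g\,h<1$ together with \eqref{equivalence}) the modified function $\hat w_{e'}$ then satisfies $\hat w_{e'}\ge m_0/2>0$ on $\overline G\times[T_0,\infty)$; in particular $\hat w_{e'}^-$ vanishes on $\partial G\times[T_0,\infty)$.

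Now apply the weak maximum principle (Lemma~\ref{weakmp}) to $\hat w_{e'}$ on $(\Sigma_1\setminus\overline G)\times(T_0,\infty)$. By \eqref{G} the zeroth-order coefficient satisfies $\hat c^{e'}<-\gamma$ on this set; the lateral parabolic boundary decomposes into $\partial\Sigma_1$, where $\hat w_{e'}=0$, and $\partial G$, where $\hat w_{e'}^-=0$ by the previous step; and $\hat w_{e'}(x,t)\to 0$ as $|x|\to\infty$, uniformly in $t$, by $(U_1)$ together with $\tfrac12\le g\,h$. The exponentially weighted estimate of Lemma~\ref{weakmp} then gives $\|\hat w_{e'}^-(\cdot,t)\|_{L^\infty(\Sigma_1\setminus\overline G)}\le Ce^{-\gamma(t-T_0)}$ for $t\ge T_0$, and since $\hat w_{e'}^-\equiv 0$ on $\overline G$ for $t\ge T_0$ we conclude $\|w_{e'}^-(\cdot,t)\|_{L^\infty(\Sigma_1)}\to 0$ as $t\to\infty$ (using \eqref{equivalence} once more). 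Passing to the limit along sequences $t_n\to\infty$ that realize elements of $\omega(u)$ shows $z_{e'}\ge 0$ in $\Sigma_1$ for every $z\in\omega(u)$, i.e.\ $e'\in\cM$; Lemma~\ref{lemma3} then completes the argument.

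The step I expect to require the most care is the uniform-in-$(t,e')$ lower bound for $\hat w_{e'}$ on $\overline G$: it is what makes the maximum-principle argument on the exterior region $\Sigma_1\setminus\overline G$ run, and it hinges on combining the compactness of $\omega(u)$, the time-uniform equicontinuity from Lemma~\ref{regularity:lemma}, and the $\hat w$-transformation of Lemma~\ref{T:l} (which is precisely what turns the sign condition into $\hat c^{e'}<-\gamma$ up to $\partial\Sigma_1$, not merely at spatial infinity). In contrast to the finite-time Proposition~\ref{polaciku}, no auxiliary subsolution is needed here, because only the behaviour of the $\omega$-limit must be controlled, so the $e^{-\gamma(t-T_0)}$ decay produced by the stability region $\Sigma_1\setminus G$ already suffices.
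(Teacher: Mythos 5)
Your proposal is correct and follows essentially the same strategy as the paper's proof: obtain a uniform-in-$t$ positive lower bound for $w_e$ on the compact set $\overline G$ via compactness of $\omega(u)$ and \eqref{omega:unif}, transfer it to nearby directions $e'$ using the equicontinuity from Lemma~\ref{regularity:lemma}, pass to $\hat w_{e'}$ and apply Lemma~\ref{weakmp} on $\Sigma_1\setminus G$ with $\hat c^{e'}<-\gamma$ to get exponential decay of $\hat w_{e'}^-$, and finally invoke Lemma~\ref{lemma3} to upgrade $z_{e'}\ge 0$ (together with $z_{e'}\not\equiv 0$) to strict positivity. Your preliminary observation that the hypothesis forces $0\notin\omega(u)$ is a slightly more explicit way of phrasing the paper's remark that $z_{e'}\not\equiv 0$ follows from the interior lower bound \eqref{nonzero}; the content is the same.
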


\begin{proof}
 Let $\rho_1$, $\delta$, and $G\subset\subset \Sigma_1$ be as in~\eqref{G}.  Since  $z_e>0$ for all $z\in\omega(u)$ and $G$ is a compact set, we have, by~\eqref{omega:unif}, that there are $\alpha,T>0$ such that $w_e(x,t)> 2\alpha>0$ $x\in G\times (T,\infty).$  By Lemma~\ref{regularity:lemma}, there is $\varepsilon>0$ such that, for $e'\in \S$ with $|e'-e|<\varepsilon$, one has
\begin{align}\label{nonzero}
 w_{e'}(x,t)> \alpha>0 \text{ for all } x\in G\times (T,\infty).
\end{align}
Let $\hat w_{e'}$ be as in Lemma~\ref{T:l}, then~\eqref{nonzero} implies that $\hat w_{e'}>0$ in $G\times(T,\infty),$ and by ~\eqref{equivalence},~\eqref{rho1}, and Lemma~\ref{weakmp}, 
\begin{align*}
 ||w^-_{e'}(x,t)||_{L^\infty(\Sigma)}&\leq ||\hat w^-_{e'}(x,t)||_{L^\infty(\Sigma \setminus G)}\leq e^{-\gamma (t-T)}||\hat w^-_{e'}(x,T)||_{L^\infty(\Sigma\setminus G)}\leq 4 e^{-\gamma (t-T)}|| w^-_{e'}(x,T)||_{L^\infty(\Sigma)}
\end{align*}
 for $t>T$, where we used that $\hat{w}^- \equiv 0$ and $c \leq -\gamma$ on $G$.  

Letting $t\to\infty$ we have that $z_{e'}\geq 0$ in $\Sigma_1$ and $z_{e'}\not\equiv 0$ by~\eqref{nonzero} for all $z\in\omega(u)$. The claim now follows by Lemma~\ref{lemma3}.
\end{proof}

The next Lemma is an adaptation of \cite[Lemma 3.8]{polacik:unbounded} to our setting and to the rotating plane method using the notation of Lemma~\ref{T:l}, we give a proof for completeness.

\begin{lemma}\label{lemma5}
Fix $e \in \S$ and  assume that $z_e>0$ in $\Sigma_1$ for all $z\in\omega(u),$ then there are $\varphi\in C^{2,1}(\Sigma_1\times(0,\infty))$ and $T(e)=T>0$ such that
\begin{enumerate}
\item if $G$ is as in~\eqref{G}, 
there is a bounded domain $D$ with $G\subset\subset D\subset\subset \Sigma_1$ such that $\varphi(x,t)<0$ in $\partial D\times (T,\infty)$ and $\varphi(x,t)>0$ in $G\times (T,\infty)$,
\item for all $T<s<t$ there exists some constant $C>0$ independent of $t$ and $s$ such that
\begin{align*}
        \frac{||\varphi(x,t)||_{L^\infty(D)}}{||\varphi(x,s)||_{L^\infty(G)}}\geq C e^{-\gamma(t-s)},
       \end{align*}
\item there is $\varepsilon>0$ independent of $e$ such that $\varphi_t<\Delta\varphi+\sum_{i=1}^N\hat b_i \varphi_{x_i}+\hat c^{e'}\varphi$ in $D\times(T,\infty)$ for all $e'\in \S$ with $|e-e'|<\varepsilon.$ Recall that $\hat b$ and $\hat c^e$ were defined in Lemma \ref{T:l}
\end{enumerate}
\end{lemma}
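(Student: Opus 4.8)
The plan is to look for $\varphi$ in the separated form $\varphi(x,t)=e^{-\gamma t}\Psi(x)$, adapting the subsolution of \cite[Lemma 3.8]{polacik:unbounded} to the rotating-plane weights $\hat b_i,\hat c^{e}$ produced by Lemma~\ref{T:l}. With this ansatz property~(2) is automatic: since $G\subset D$ one has $\|\varphi(\cdot,t)\|_{L^\infty(D)}=e^{-\gamma t}\|\Psi\|_{L^\infty(D)}\geq e^{-\gamma t}\|\Psi\|_{L^\infty(G)}=e^{-\gamma(t-s)}\|\varphi(\cdot,s)\|_{L^\infty(G)}$, so $C=1$ works; and $\varphi\in C^{2,1}(\Sigma_1\times(0,\infty))$ as soon as $\Psi$ is an explicit $C^2$ function on $\Sigma_1$. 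Thus condition~(3) reduces to the purely elliptic requirement
\[
\Delta\Psi+\sum_{i=1}^N \hat b_i\,\Psi_{x_i}+(\hat c^{e'}+\gamma)\,\Psi>0\quad\text{on }D,
\]
where the extra $+\gamma$ coming from the factor $e^{-\gamma t}$ is exactly what makes the outer part of $D$ work, since there $\hat c^{e'}<-\gamma$ by~\eqref{G}.

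For the domain I would take $D$ to be a large truncation of $\Sigma_1$ that stays at distance $\delta'<\delta$ from $\partial\Sigma_1$ and is bounded by a radius $L>\rho_1$, so that $G\subset\subset D\subset\subset\Sigma_1$. The number $T=T(e)$ is then chosen so that $\hat w_{e'}>0$ on $\overline D$ for all $t\geq T$ and all $e'$ close to $e$: this follows from the hypothesis $z_e>0$ in $\Sigma_1$, Lemma~\ref{lemma4}, compactness of $\omega(u)$, and~\eqref{omega:unif}, and it is what makes $\varphi$ usable for comparison with $\hat w_{e'}$ downstream. On $D$ one splits into an inner region (a fixed neighborhood of $\overline G$), where by~\eqref{beta0}--\eqref{hbeta} the coefficient $\hat c^{e'}$ is bounded uniformly in $e'$ but of indefinite sign while the $\hat b_i$ are supported only in the $\delta$-collar of $\partial\Sigma_1$, and an outer region $D\setminus G$, where $\hat c^{e'}<-\gamma$. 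The profile $\Psi$ I would build as a difference $\Psi=\Psi_1-A\Psi_2$ of two explicit, spherically structured comparison functions: a strongly subharmonic positive ``core'' $\Psi_1$ of exponential / modified-Bessel type, designed so that $\Delta\Psi_1$ dominates the (bounded but possibly very negative) term $(\hat c^{e'}+\gamma)\Psi$ on the inner region, corrected by a faster-growing barrier $\Psi_2$ with weight $A$ large enough that $\Psi<0$ on $D\setminus G$ and on all of $\partial D$, so that there $(\hat c^{e'}+\gamma)\Psi>0$; the growth rates, $A$, $\delta'$ and $L$ are tuned together, using that the $\hat b_i$ vanish on $G$ and that $\Psi$ has controlled gradient on the $\delta$-collar to absorb the first-order terms.

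For the uniformity in~(3): the inequalities above are strict on the compact set $\overline D$, and since $\hat c^{e'}\to\hat c^{e}$ and the $\hat b_i$ depend continuously on $e'$, with the bounds \eqref{beta0}, \eqref{rho1}, \eqref{hbeta} and the location of $G$ independent of $e\in\S$, the strict subsolution inequality persists for $|e'-e|<\varepsilon$ with $\varepsilon$ independent of $e$. The main obstacle is the transition shell near $\partial G$ where $\Psi$ changes sign: there $\hat c^{e'}+\gamma<0$ but is \emph{not} bounded away from $0$, while $\Psi$ is close to $0$, so the zeroth-order gain degenerates and one must make $\Psi$ strongly subharmonic precisely across this shell — which is why the core has to be of exponential/Bessel type rather than polynomial or an eigenfunction — and simultaneously keep $\Psi$'s growth mild enough near $\partial\Sigma_1$ that $\sum_i\hat b_i\Psi_{x_i}$ does no harm, all while preserving the $e$-uniform $\varepsilon$.
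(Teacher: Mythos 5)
Your plan is to use a separated ansatz $\varphi(x,t)=e^{-\gamma t}\Psi(x)$, which reduces property~(3) to finding a single time-independent profile $\Psi$ satisfying
\[
\Delta\Psi+\sum_{i}\hat b_i\Psi_{x_i}+(\hat c^{e'}(x,t)+\gamma)\Psi>0\quad\text{in }D
\]
\emph{for every $t>T$ and every $e'$ near $e$}, together with $\Psi>0$ on $\overline G$ and $\Psi<0$ on $\partial D$. This is a genuine departure from the paper's proof, which uses the time-dependent profile $\varphi(x,t)=e^{-\gamma t}\bigl(\hat w_e(x,t)-s\bigr)$: the entire point of that choice is that $\hat w_e$ already solves the linear equation with coefficient $\hat c^e$, so the dangerous zeroth-order contribution $\hat c^e\hat w_e$ cancels, and what remains is $(\hat c^e-\hat c^{e'})\hat w_e - \gamma\hat w_e + s\gamma + s\hat c^{e'}$. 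The only piece involving the sign of the zeroth-order coefficient inside $G$ is the \emph{difference} $(\hat c^e-\hat c^{e'})\hat w_e$, which is small by $(f_1)$. No static profile has this cancellation available.

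That cancellation is not a convenience; it is what saves the proof, and its absence is a genuine obstruction to your approach. Observe that nothing in the hypotheses bounds $\hat c^{e'}+\gamma$ from below inside $G$; in particular it is entirely compatible with $(f_1)$, $(f_2)$, and $z_e>0$ that at some time $t^*>T$ one has $\hat c^{e'}(x,t^*)+\gamma<0$ for every $x\in D$ (indeed $\hat c^{e'}<-\gamma$ already holds on $D\setminus G$ by construction, and inside $G$ the linearized coefficient $c^{e'}$ can dip below $-\gamma$ whenever $u$ passes through small values, even if the $\omega$-limit is nontrivial). But if $\hat c^{e'}(\cdot,t^*)+\gamma<0$ on all of $D$, then on the open set $\Omega_+:=\{\Psi>0\}\cap D$ your inequality gives $\Delta\Psi+\sum_i\hat b_i\Psi_{x_i}>-(\hat c^{e'}+\gamma)\Psi>0$, so $\Psi$ is a strict subsolution of a second-order elliptic operator with no zeroth-order term on $\Omega_+$. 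Since $\Psi<0$ on $\partial D$, $\Omega_+\subset\subset D$ and $\Psi=0$ on $\partial\Omega_+$. The maximum principle then forces $\sup_{\Omega_+}\Psi\le 0$, contradicting $\Psi>0$ on $\overline G\subset\Omega_+$. So no $C^2$ profile of the separated form can exist uniformly in $t$. Your paragraph flags the transition shell near $\partial G$ as the difficulty, but that shell can be handled by a good barrier; the unrecoverable failure is in the interior of $G$, precisely because a static $\Psi$ cannot exploit the cancellation that the moving profile $\hat w_e-s$ provides. The paper's construction — together with the choice $\delta\in(0,\gamma/2)$, the uniform closeness $\|c^{e'}-c^e\|_\infty<\delta$ from $(f_1)$, and the lower bound $\hat w_e>\alpha$ on $G\times(t_0,\infty)$ coming from $z_e>0$ and the compactness of $\omega(u)$ — is essentially forced.
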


\begin{proof}
Let
\begin{align}
\delta\in(0,\gamma/2)\label{sol2}. 
\end{align}
By $(f_1)$ and Lemma \ref{regularity:lemma} there exists $\varepsilon>0$ such that 
\begin{align}\label{delta:coeff}
 || c^{e'}(\cdot,t)- c^e(\cdot,t)||_{L^\infty(\Sigma_1)}<\delta
\end{align}
for all $t>0$ and $e'\in \S$ with $|e-e'|<\varepsilon.$  Observe that for any $z\in \omega(u),$ one has  $z_e>0$ in $\Sigma_1$ and  $z_e(x)=0$ for $x\in \partial \Sigma_1$ 
or if $|x|\to\infty$. Hence, the compactness of $\omega(u)$ and the uniform convergence of $u(\cdot,t)$ to $\omega(u)$ as $t\to\infty$ (see (U2) 
implies that there exist $\alpha,t_0>0$
 such that
\begin{align}\label{intpositivity}
 w_e(x,t)>\alpha>0 \text{ in } G\times (t_0,\infty).
\end{align}
In addition, if we define 
\begin{align}
M&:=\sup\{| w_e(x,t)|:e\in \S, x\in \Sigma_1,t\in(0,\infty)\},\nonumber\\
m&:=\sup\{| c^e(x,t)|:e\in \S, x\in G,t\in(0,\infty)\},\nonumber\\
&0<s<\min\bigg\{\frac{\gamma \alpha}{2(m+\gamma)},\alpha\bigg\} \,, \label{sol1}
\end{align}
then there is bounded domain $D$ with $G\subset \subset D\subset \subset\Sigma_1$ 
such that if we increase $t_0$ if necessary, then 
\begin{align}\label{subsol:boundary}
  w_e(x,t)-s<0 \text{ in } \partial D\times (t_1,\infty) \,.
\end{align}
Let $\hat w_e$ be as in Lemma \ref{T:l}, then Lemma \ref{T:l} yields that \eqref{intpositivity} and \eqref{subsol:boundary} holds with $w_e$ replaced by $\hat w_e$. 
If we define $\varphi(x,t):= e^{-\gamma t}( \hat w_e(x,t)-s),$ by~\eqref{intpositivity}, \eqref{sol1}, and~\eqref{subsol:boundary},   claim 1. follows. Next, note that 
\begin{align*}
e^{\gamma t}||\varphi(x,t)||_{L^\infty(D)}\leq 4M + s\ \  \text{ and }\ \ e^{\gamma t}||\varphi(x,t)||_{L^\infty(G)}\geq \alpha -s
\end{align*}
for $t>T,$ which implies Claim 2.  Finally, for $(x,t)\in D\backslash G\times (T,\infty)$ by the definition of the coefficients $\hat c^e$ in Lemma~\ref{T:l},~\eqref{rho1},~\eqref{sol2},~\eqref{delta:coeff}, and~\eqref{intpositivity} yield
\begin{align*}
 \varphi_t-\Delta\varphi - \hat b_i\varphi_{x_i}-  \hat c^{e'}\varphi&=e^{-\gamma t} [( \hat c^e - \hat c^{e'}) \hat w_e - \gamma \hat w_e + s\gamma + s \hat c^{e'}]\leq e^{-\gamma t} [(\delta - \gamma) \hat w_e + s\gamma - s\gamma]<0.
\end{align*}

For $(x,t)\in G\times (T,\infty)$ by~\eqref{rho1},~\eqref{sol2},~\eqref{delta:coeff},~\eqref{intpositivity}, and~\eqref{sol1} we have
\begin{align*}
 \varphi_t-\Delta\varphi - \hat b_i \varphi_{x_i}-  \hat c^{e'}\varphi&=e^{-\gamma t} [( \hat c^e - \hat c^{e'})\hat w_e - \gamma \hat w_e + s\gamma + s \hat c^{e'}]\\
&\leq e^{-\gamma t} \bigg[\bigg(\delta - \frac{\gamma}{2}\bigg) \hat w_e -\frac{\gamma}{2} \hat w_e+ s (m+\gamma)\bigg] \leq e^{-\gamma t} \bigg[-\frac{\gamma \alpha}{2} + s (m+\gamma)\bigg]<0.
\end{align*}
This implies claim 3.
\end{proof}

\begin{proof}[Proof of Theorem~\ref{unbounded}]
We prove that the first alternative holds assuming that the second one does not hold. 
 Let $e\in \S$ as in $(U_0)$ and assume that $z_e\not\equiv 0$ for all $z\in\omega(u).$ Then, by Lemma~\ref{lemma3} and Lemma~\ref{lemma4}, there is $\varepsilon_0>0$ such that 
\begin{align}\label{open}
z_{e'}>0 \quad\text{ in $\Sigma_1$ for all } z\in\omega(u) \text{ and } e'\in \S \text{ with } |e'-e|<\varepsilon_0.
\end{align}

Now, let $P$ be any two dimensional subspace of $\mathbb R^N$ containing $e$. Without loss of generality, we may assume that $e=(1,0,\ldots,0)$ and $ P = \{ x=(x_1,x_2,0,\ldots,0) \mid x_1 , x_2 \in \mathbb R \}.$  Define 
\begin{align*}
e_\eta:=(\cos \eta,\sin \eta,0,\ldots,0),\quad 
w_{\eta}(x,t):=w_{e_\eta}(x,t),\quad 
z_{\eta}(x):=z_{e_\eta}(x)
\end{align*}
for $\theta \in \mathbb R$ and 
\begin{align*}
\Theta_1&:=\inf\{\eta<0 \::\: z_\phi > 0 \text{ in $\Sigma_1$ for all } z\in\omega(u), 0\leq \phi\leq \eta \},\\
\Theta_2&:=\sup\{\eta>0 \::\: z_\phi > 0 \text{ in $\Sigma_1$ for all } z\in\omega(u), 0\leq \phi\leq \eta\}.
\end{align*}
We note that $\Theta_1<0<\Theta_2$ by~\eqref{open}. We claim that $\Theta_2-\Theta_1 = \pi.$ By contradiction, assume that $\Theta_2-\Theta_1 < \pi;$ then, by continuity and Lemma~\ref{lemma3}, we can assume that there are $\bar z, \tilde z\in \omega(u)$ such that
\begin{align*}
 \tilde z_{\Theta_2}\equiv 0\quad\text{ and }\quad \bar z_{\Theta_1} \equiv 0\qquad \text{ in }\Sigma_1.
\end{align*}

Fix $\varepsilon_1>0$ as in Lemma~\ref{lemma5} part 3 and let 
\begin{align*}
0<\varepsilon<\min\bigg\{\pi-(\Theta_2-\Theta_1),\frac{\varepsilon_1}{2},\varepsilon_0\bigg\}.
\end{align*}
Fix $\theta\in(\Theta_1-\varepsilon,\Theta_1)$. By Lemma~\ref{lemma2:cor},
\begin{align}\label{omega1}
 \tilde z_{\theta} > 0\quad \text{ and } \quad\bar z_{\theta} < 0\qquad \text{ in } \Sigma_1.
 \end{align}
Let $\bar t_n,\tilde t_n\to\infty$ such that $\tilde t_n< \bar t_n$ for all $n\in\mathbb N$ and
 \begin{align}\label{omega2}
 w_{\theta}(x,\tilde t_n)\to \tilde z_{\theta}(x),\qquad w_{\theta}(x,\bar t_n)\to \bar z_{\theta}(x)\qquad \text{ as }n\to\infty.
  \end{align}

Now we conclude the proof arguing as in \cite[Lemma 3.7]{polacik:unbounded}, we include the details for completeness. Recall the change of variables detailed in Lemma~\ref{T:l}, where the coefficients $\hat b_i,$ $\hat c^e,$ the function $\hat w_e$ and the set $G\subset\Sigma$ were defined. 
For any $\eta \in [0, 2\pi)$ we also define $\hat w_\eta(x,t):=\hat w_{e_\eta}(x,t)$ for $(x,t)\in\Sigma_1\times (0,\infty).$

Since $z_\psi > 0$ for each $z \in \omega(u)$ and $\psi > \Theta_1$ sufficiently close to $\Theta_1$,
Lemma~\ref{lemma5} gives the existence of $C,T>0,$ a bounded domain $D$ with $G\subset\subset D\subset\subset \Sigma$ and a function $\varphi\in C^{2,1}(D\times (T,\infty)),$ that satisfies for $\vartheta\in(\Theta_1 - \varepsilon, \Theta_1)$ that 
\begin{equation}\label{symm1}
\begin{aligned}
\varphi_t-\Delta\varphi -\hat b_i \varphi_{x_i}-\hat c^{e_\vartheta}\varphi&<0 \quad\text{ in } D\times(T,\infty)\,, \nonumber\\
\varphi&<0 \quad\text{ on }\partial D\times(T,\infty),\nonumber\\
\varphi&>0 \quad\text{ in } G\times(T,\infty),\nonumber\\
\frac{||\varphi(\cdot,t)||_{L^\infty(D)}}{||\varphi(\cdot,s)||_{L^\infty(D)}}&\geq C e^{-\gamma(t-s)} \quad\text{ for } T<s<t,
\end{aligned}
\end{equation}
where we used that $|\vartheta - \psi| < 2\varepsilon \leq \varepsilon_1$.
By~\eqref{omega1}, \eqref{omega2}, and the definition of $\hat w_e$ there exists $\alpha>0$ such that
\begin{align*}
 \hat w_{\theta}(\cdot,\tilde t_n)>2\alpha>0\quad \text{ and }\quad
 \hat w_{\theta}(\cdot,\bar t_n)<0 \quad \text{ in } D
\end{align*}
for any sufficiently large $n$.  
Then, there is $T_n\in(\tilde t_n,\bar t_n)$ such that
\begin{align}
 \hat w_{\theta}(x,t)&>0 \text{ in } G\times(\tilde t_n,T_n),\\
 \hat w_{\theta}(x^*, T_n)&=0 \text{ for some } x^*\in \overline{G}.\label{contradiction}
\end{align}
From Lemma~\ref{weakmp} and~\eqref{rho1} follows
\begin{align}\label{maximum}
 ||\hat w_{\theta}^-(\cdot,t)||_{L^\infty(\Sigma)}\leq e^{-\gamma (t-\tilde t_n)}||\hat w_{\theta}^-(\cdot,\tilde t_n)||_{L^\infty(\Sigma)}  \ \ \ \text{ for } t\in (\tilde t_n,T_n).
\end{align}
By Lemma~\ref{regularity:lemma}, there is $\theta>0$ independent of $n$ such that 
 \begin{align*}
  ||\hat w_{\theta}(\cdot, t) - \hat w_{\theta}(\cdot,s)||_{L^\infty(\Sigma)}&<\alpha \text{ for } |t-s|<4\theta.
 \end{align*}
This means that $T_n-\tilde t_n >4\theta$ for all  sufficiently large $n$.  Let $n$ be 
sufficiently large such that $\tilde t_n>T.$ Since $\varphi$ satisfies \eqref{symm1} with $\vartheta = \theta$, 
by the comparison principle \cite[Proposition 52.6]{QS07},
\begin{align}\label{subsolution}
\hat w_\theta(x,t)>\alpha \frac{\varphi(x,t)}{||\varphi(\cdot,\tilde t_n)||_{L^\infty(D)}} \hspace{.5cm} \text{ for } (x,t)\in D\times(\tilde t_n, T_n).
\end{align}
And, by~\eqref{omega1},~\eqref{omega2},~\eqref{symm1},~\eqref{maximum},~\eqref{subsolution}, and Lemma~\ref{harnack:inequality}, there are $\kappa,C_1,C_2,m>0$ independent of $n$ such that
\begin{align*}
 \inf_{x\in G}\hat w_\theta(x,T_n)&>\kappa||\hat w_{\theta}^+||_{L^\infty(G\times(T_n-3\theta, T_n-2\theta))}- e^{4m\theta}\sup_{\partial_P (D\times(T_n-4\theta,T_n))} \hat w_{\theta}^-\\
&\geq \kappa \alpha C e^{-\gamma(T_n-\tilde t_n-2\theta)} - e^{4m\theta} e^{-\gamma(T_n-\tilde t_n - 4\theta)}||\hat w_{\theta}^-(\cdot,\tilde t_n)||_{L^\infty(\Sigma)}\\
&\geq e^{-\gamma(T_n-\tilde t_n)}[C_1 - C_2 ||\hat w_{\theta}^-(\cdot,\tilde t_n)||_{L^\infty(\Sigma)}]>0
\end{align*}
for $n$ sufficiently big, a contradiction to~\eqref{contradiction}.

Therefore $\Theta_2-\Theta_1 = \pi,$ which in particular implies that $z_{\Theta_1}\equiv z_{\Theta_2}\equiv 0$ in $\Sigma$ for all $z\in\omega(u).$ Since this can be done for all 2D planes $P$ that contain $e$ and the origin, Proposition~\ref{sec:char-foli-schw-1} yields the asymptotic symmetry of $u$. The strict monotonicity follows from the fact that $z_{\eta}>0$ for all $\eta\in(\Theta_1,\Theta_2).$ \end{proof}

We are ready to show Theorem~\ref{theorem:unbounded}. 

\begin{proof}[Proof of Theorem~\ref{theorem:unbounded}]
 By hypothesis $(U_0)'$ we have that $z_e\geq 0$ in $\Sigma_1$ for all $z\in\omega(u)$ and all $e\in U.$ If there exists $e\in U$ such that $z_e\not\equiv 0$ for all $z\in\omega(u)$ then Theorem~\ref{unbounded} yields the first alternative in the statement of the theorem.  On the other hand, if  for each $e\in U$ there is $z\in\omega(u)$ such that $z_e\equiv 0;$ then, Lemma~\ref{lemma2.5} implies that the second alternative in Theorem \ref{theorem:unbounded} holds. 
\end{proof}

\begin{proof}[Proof of Corollaries~\ref{corollary:elliptic} and~\ref{corollary:periodic}]
Corollaries~\ref{corollary:elliptic} and~\ref{corollary:periodic} follow directly from Theorem~\ref{unbounded}, since in these cases, the strict initial reflectional inequality and the maximum principle discard the second alternative in Theorem~\ref{unbounded}.
\end{proof}

\begin{proof}[Proof of Theorem \ref{thm2}]
  Let $f(t,u):=a(t)|u|^{p-1}u-b(t)u$. Then $f_u(t,u):=p\, a(t)|u|^{p-1}-b(t)$ and $(f_1)$ holds because
 \begin{align*}
  \lim_{u\to v}\sup_{t>0}|f_u(t,u)-f_u(t,v)|
  \leq p\|a\|_\infty\lim_{u\to v}\Big||u|^{p-1}-|v|^{p-1}\Big|=0
 \end{align*}
and, for every $K>0$,
\begin{align*}
 \sup_{r\in J,t>0,s\in[-K,K]}|f_u(t, s)|
 \leq p\max\{\|a\|_\infty,\|b\|_\infty\}|K^{p-1}+1|<\infty.
\end{align*}
Moreover, if
$\eta>0$ is as in \eqref{eta:intro} and
$\eps:=\left(\frac{\eta}{2p\max\{\|a\|_\infty, 1\}}\right)^\frac{1}{p-1}$, then $\sup\limits_{u\in(-\eps,\eps)}|p\, a(t)|u|^{p-1}|\leq\frac{\eta}{2}$ and
\begin{align*}
 f_u(t,u)\leq |p\, a(t)|u|^{p-1}| - \eta=-\frac{\eta}{2}\quad \text{ for all }t>0 \text{ and }u\in(-\eps,\eps),
\end{align*}
which implies that $(f_2)$ is satisfied.  Finally, by \eqref{u0h}, condition $(U_0)'$ holds.  Then, by Theorem \ref{theorem:unbounded}, either there is $z\in\omega(u)$ radially symmetric with respect to the origin, or $u$ is asymptotically strictly foliated Schwarz symmetric. 

It remains to prove that
\begin{itemize}
 \item [(a)] if $z\in\omega(u)$ is radially symmetric, then $z\equiv 0$. 
 \item [(b)] if $0\not\in \omega(u)$, then there is $q\in\R^N$ such that, for all $z\in\omega(u)$, $z$ is radially symmetric with respect to $q$ and all elements in $\omega(u)$ are either all negative or all positive.
\end{itemize}
 
 \textbf{Proof of (a):} For a contradiction, assume there is $z\in\omega(u)$ radially symmetric and $x_0\in\R^N$ such that $z(x_0)\neq 0$. Using the radiality and continuity of $z$ we may  without loss of generality
assume that $x_0=re_1=(r,0,\ldots,0)$ for some $r>0$ and $z(re_1)<0$. For $\lambda\in(-1,1)$, denote $\R^N_\lambda:=\{x\in\R^N\::\: x_1>\lambda\}$ and $u_\lambda(x,t):=u(x,t)-u(2\lambda e_1-x,t)$ for $x\in\R^N_\lambda$ and $t\geq 0$.  By \eqref{u0h}, $u_\lambda (x,0) \geq 0$ and $u_\lambda (x,0) \neq 0$ for any $\lambda \in(-1, 1)$, and therefore, by 
the maximum principle, $u_\lambda>0$ in $\R^N_\lambda\times(0,\infty)$. Hence, $z(x)\geq z(2\lambda e_1-x)$ for all $x\in\R^N_\lambda$ and $\lambda\in(-1,1)$. 
Since $z$ is radial and in particular even, for any fixed
 $0<\lambda<\min\{r,1\}$ one has $z_{-\lambda} \geq 0$ in $\R^N_{-\lambda}$, and then
\begin{align}\label{ab}
 0&>z(re_1)
 \geq z(-2\lambda e_1-re_1)=z((2\lambda+r)e_1).
\end{align}
Iterating this procedure, we obtain that $0>z(re_1)\geq z((2k\lambda+r)e_1)$ for all $k\in\mathbb N$. Since $|(2k\lambda+r)e_1|\to\infty$ as $k\to\infty$ we obtain 
a contradiction to the uniform decay assumption  \eqref{udintro}. As a consequence, $z\equiv 0$ in $\R^N$.

\medskip

\textbf{Proof of (b):} Assume that $0\not\in \omega(u)$. Then, as shown above, there is $p_0\in\S$ such that, for all $z\in\omega(u)$, $z$ is strictly foliated Schwarz symmetric with respect to $p_0$. Let $e_1=(1,0,\ldots,0)$, $e_2=(0,1,0,\ldots,0)$, $\ldots$, $e_N=(0,0,\ldots,1)$ denote a basis for $\R^N.$  For $\alpha\in\R$ and $i\neq 1$, consider 
$u_{\alpha,i}(x):=u(x+\alpha e_i)$ and note that $u_{\alpha,i}$ solves the same equation as $u$ (which is translationally invariant) and also satisfies $(U_0)'$. Then, arguing as before, there is $p_{\alpha,i}\in \S$ such that, for all $z\in\omega(u_{\alpha,i})$, $z$ is foliated Schwarz symmetric with respect to $p_{\alpha,i}$; namely,
every $z\in\omega(u)$ is strictly foliated Schwarz symmetric with respect to the axes $P_{i,\alpha}:=\R p_{\alpha,i}+\alpha e_i$.  

Fix $\bar z\in\omega(u)$ and assume without loss of generality that $\bar z^+\not\equiv 0$. Since $\bar z$ is strictly monotone with respect to the polar angle and it decays uniformly to zero at infinity, then the maximum of $z$ in $\R^N$ must lie in an intersection between $P_{i,\alpha}$ and $P_0$, $q:=P_{i,\alpha}\cap P_0$.  By varying $\alpha\in\R$, we can obtain an irrational angle between $P_{i,\alpha}$ and $P_0$, which then easily implies that $\bar z$ must be a radially symmetric function with respect to $q$. But the axis $P_0$ and $P_{i,\alpha}$ are symmetry axes for \emph{all} elements $z\in\omega(u)$, and therefore $\omega(u)$ can only have radially symmetric elements with respect to $q$.

Finally, for a contradiction, assume that there is a sign-changing $z\in\omega(u)$. Assume first that $q\in \{x_1\geq 0\}$.  We argue as in \eqref{ab} on the line $q+\R e_1$.  Indeed, since $z$ changes sign and $z$ is radially symmetric and continuous, there is $r>0$ such that $z(q+re_1)<0$. But then, for $0<\lambda<\min\{r,1\}$,
\begin{align*}
 0&>z(q+re_1)
 \geq z(-2\lambda e_1-q-re_1)=z(q+(2\lambda+r)e_1).
\end{align*}
Iterating this procedure, we obtain a contradiction to the uniform decay assumption  \eqref{udintro} as before. In fact, we can deduce that if $q\in \{x_1\geq 0\}$, then necessarily $z\geq 0$ in $\R^N$ for all $z\in\omega(u)$, and the strict positivity follows from Lemma \ref{harnack:inequality} using that $0\not\in\omega(u)$.  Similarly, if $q\in \{x_1 \leq  0\}$, then, since $z$ changes sign and $z$ is radially symmetric and continuous, $z(q-re_1)>0$ for some $r>0$ and, for $0<\lambda<\min\{r,1\}$,
\begin{align*}
 0&<z(q-re_1)
 \leq z(2\lambda e_1-q+re_1)=z(q-(2\lambda+r)e_1).
\end{align*}
Iterating this procedure, we obtain again a contradiction to \eqref{udintro} and, as before, we conclude that if $q\in \{x_1\leq 0\}$, then necessarily $z\leq 0$ in $\R^N$ for all $z\in\omega(u)$ and the strict negativity follows from Lemma \ref{harnack:inequality} applied to $-u$. Finally, observe that if $q\in\{x_1=0\}$, then necessarily $z\equiv 0$ for all $z\in\omega(u)$, which cannot happen since we assumed that $0\not\in\omega(u)$. Therefore, $q\not\in\{x_1=0\}$ and this ends the proof.
\end{proof}

\section{Uniform Decay Assumption}\label{hypothesis:discussion}

\begin{proof}[Proof of Lemma~\ref{UD1}]
 The proof follows closely the ideas of \cite[Corollary 1.2]{busca}. We first show that there is some $R_0>0$ such that
\begin{align}\label{gradient:decreasing}
 \nabla u(x,t)\cdot e <0 \hspace{1cm} \text{ for all } x\in\mathbb R^N, \ x\cdot e>R_0, \ t>0, \ e\in \S.
\end{align}
To this end, we use a moving plane method. Let $R>0$ as in~\eqref{weak:monotonicity} and $e\in \S,$ without loss of generality we may assume that $e=e_1=(1,0,\ldots,0).$ For $\lambda>R$ and $(x_1,x')=x\in\mathbb R^N$ define $x\mapsto x^\lambda=(2\lambda - x_1,x')$ the reflection in the hyperplane $H_\lambda=\{x\in\mathbb R^N : x_1=\lambda\}$ and let $V_\lambda u(x,t):=u(x^\lambda,t)-u(x,t)$ for $x\in\Sigma_\lambda:=\{x\in\mathbb R^N : x_1>\lambda\}$ and $t>0.$  Then, by~\eqref{weak:monotonicity}, we have that
\begin{align*}
 (V_\lambda u(x,t))_t-\Delta V_\lambda u(x,t)&=f(t,|x^\lambda|,u^\lambda)-f(t,|x|,u)\\&\geq f(t,|x|,u^\lambda)-f(t,|x|,u)=c^\lambda(x,t)V_\lambda u(x,t)
\end{align*}
for all $(x,t)\in \Sigma_\lambda\times(0,\infty),$ where $c^\lambda(x,t):=\int_0^1 \partial_u f(t,|x|,su(x^\lambda,t)+(1-s)u(x,t))ds.$ Note that $c^\lambda\in L^\infty(\Sigma_\lambda \times (0, \infty)).$  Let $R_0\geq R$ be such that $\operatorname{supp}(u_0)\subset B_{R_0}(0),$ then, for $\lambda > R_0$ we have that $V_\lambda u(x,0)\geq 0$ in $\Sigma_\lambda$ and $V_\lambda u(x,0)\not\equiv 0.$  Then, since $u$ is globally bounded, we can apply the parabolic maximum principle to show that $V_\lambda u(x,t)>0$ in $\Sigma_\lambda\times(0,\infty)$ for all $\lambda>R_0$ and $V_\lambda u(x,t)\equiv 0$ in $H_\lambda\times(0,\infty).$  Then, by the Hopf's Lemma we have that
$-2 \nabla u(x,t)\cdot e= -2\partial_e u(x,t)=\partial_e V_\lambda u(x,t) >0$ on $H_\lambda\times(0,\infty)$ and~\eqref{gradient:decreasing} follows. Next, to prove the uniform decay property. We proceed by contradiction. Assume there is some sequence $\{(x_n,t_n)\}_{n=1}^\infty\subset \mathbb R^N\times (0,\infty)$ with $|x_n|,t_n\to \infty$ as $n\to\infty$ such that
\begin{align}
 u(x_n,t_n)\geq k
\end{align}
for some constant $k>0.$  Passing to a subsequence, we may assume (using rotations if necessary) that $x_k/|x_k|$ tends to the unitary vector $e_1.$  By~\eqref{gradient:decreasing} we have that $\partial_{x_1}u(x,t)<0$ for any $t>0$ and any $x=(x_1,x')\in\mathbb R^N$ with $x_1$ big enough.  Then, the assumed $L^\infty$ bound and standard parabolic 
regularity estimates imply that $|\nabla u|$ is uniformly bounded.  Then, we can find $\varepsilon>0$ such that $u(x,t_n)\geq \frac{k}{2}$ for all $x\in B_\varepsilon(x_n).$  Since $|x_n|\to \infty$ and $\partial_{x_1}u(x,t)<0$ we obtain a contradiction to the assumed $L^q$-bound.
\end{proof}

\begin{proof}[Proof of Lemma~\ref{UD2}]
 We proceed by comparison. Consider the solution of the problem
\begin{equation*}
\begin{aligned}
v_t -\Delta v &= f(t,|x|,v)\quad \text{ in }\mathbb R^N \times (0,\infty),\qquad v(x,0) = u^+_0(x)\quad \text{ for }x\in \R^N;
\end{aligned}
\end{equation*}
then, by Lemma~\ref{UD1}, $\lim_{|x|\to\infty}\sup_{t>0} v(x,t)=0.$  On the other hand, let $w(x,t):=v(x,t)-u(x,t),$ then $w$ satisfies the linearization
\begin{equation*}
w_t -\Delta w = \bigg[\int_0^1 \partial_u f(t,|x|,sv+(1-s)u)ds\bigg] w\quad \text{ in }\mathbb R^N \times (0,\infty)
\end{equation*}
and $w(x,0) \geq 0$ for $x\in \mathbb R^N$.  Since the  coefficient of $w$ is bounded, the parabolic maximum principle implies that $v\geq u$ in $\mathbb R^N\times (0,\infty).$  Then, $\lim_{|x|\to\infty}\sup_{t>0} u(x,t)\leq 0.$ Now, using the solution of 
\begin{equation*}
\begin{aligned}
v_t -\Delta v &= f(t,|x|,v)\quad \text{ in }\mathbb R^N \times (0,\infty),\qquad 
v(x,0) = u^-_0(x)\quad \text{ for }x\in \R^N,
\end{aligned}
\end{equation*}
and repeating the argument we obtain that $\lim_{|x|\to\infty}\sup_{t>0} u(x,t)\geq 0$ and the claim follows.
\end{proof}

\section{Examples}\label{ex:sec}

\subsection{Example 1: An $\omega$-limit set with only strictly foliated Schwarz symmetric elements}\label{e1}

Here we exhibit a problem which does not have any radially symmetric\footnote{with respect to the origin or with any other point $x_0\in\R^N$.} element in the $\omega$-limit set of the solution $\omega(u)$, but that satisfies the assumptions of Theorem~\ref{theorem:unbounded}, and therefore all elements in  $\omega(u)$ are foliated Schwarz symmetric with respect to the same axis. 

First, fix any ball $B \subset \R^N_+:=\{x\in \R^N: x_1 > 0\}$ and let $\lambda_1(B)$ denote the first eigenvalue of the Dirichlet Laplacian in $B$.  Define $g(v) =  v^p-\lambda v$,  where $1 < p < (N + 2)/(N - 2)$ and $0<\lambda<\lambda_1(B)$.  Then, by standard variational arguments (see e.g. \cite[Theorem 6.2]{QS07}) there exists a positive solution $\zeta$ of the problem
\begin{equation}\label{zeq}
-\Delta \zeta = g(\zeta) \quad \textrm{in } B, \qquad \zeta = 0 \quad  \textrm{on } \partial B \,.
\end{equation}   
Let $M = 4 \sup_B \zeta$ and define $f$ such that $f(|x|, v) = g(v)$ for $x \in B$ and $0 \leq v < M/2$. 
For $v > M/2$ and $x \in \R^N \setminus B$,  $f:[0,\infty)\times \R$ is extended as a smooth function such that it satisfies 
 $(f_0)$ and $(f_1)$.  We assume also that
\begin{itemize}
\item[$(f_3)$] $f$ is an odd function in $u$, that is, $f(\cdot,u)=-f(\cdot,-u)$ for all $u\in\R$,
\item[$(f_4)$] there is $M^*>M$ such that
\begin{equation}
 uf(r,u) < 0 \qquad \text{ for any } r > 0 \text{ and } |u|\geq M^*,
\end{equation}
 \item[$(f_5)$] there is $R^* > 1$ and $\Lambda > 0$ such that
 \begin{equation}
 f(r, 0) = 0 \qquad \textrm{and} \qquad 
 f_u(r, u) < -\Lambda \qquad \text{ for any } |u| \leq M^*\text{ and }r > R^* \,.
 \end{equation}
\end{itemize}

Let $u_0\in C(\R^N)$ be an odd function with respect to $x_1$, that is, $u_0(x_1,x')=-u_0(-x_1,x')$ for all $x=(x_1,x')\in\R^N$, and assume that 
\begin{align}\label{u0s}
\operatorname{supp}(u_0)=A_1\cup A_2, 
\end{align}
where $A_1$ and $A_2$ are two disjoint compact sets such that $B \subset A_1$,   
\begin{align}\label{cu0}
A_1&\subset\subset \{x\in\R^N\::\: |x| < R^*, x_1>0,\ x_i>0\ i=2,\ldots,N\},
\end{align}
and $A_2$ is symmetric to $A_1$ with respect to the hyperplane $\{x \in \R^N : x_1 = 0\}$. Finally, assume that 
\begin{align}\label{u0z}
u_0 \geq \zeta\quad \text{ in $B$,}\qquad |u_0|<M^*\quad \text{ in $\R^N$,}
\end{align}
 and 
 $u_0$ is positive in $A_1$ and negative in $A_2$.  
 
 \medskip
 
 Let us first derive a priori estimates for classical solutions $u$ of 
\begin{align}\label{weq}
 u_t - \Delta u = f(|x|,u)\quad \text{ in }\R^N \times (0, \infty),\qquad u(x,0)=u_0(x)\quad \text{ for }x\in\R^N \,.
\end{align}
Note that, by $(f_4)$,
\begin{align*}
 (u-M^*)_t-\Delta (u-M^*) - \frac{f(|x|,u)-f(|x|,M^*)}{u-M^*}(u-M^*)=f(|x|,M^*)<0 \quad \text{ in }\R^N\times(0,\infty),
\end{align*}
where, by $(f_4)$, $\sup_{x\in\R^N,t>0}\frac{f(|x|,u)-f(|x|,M^*)}{u-M^*}<\infty$.  Then, by \eqref{u0z} and the maximum principle, $u \leq M^*$ in $\R^N\times(0,\infty)$.  Arguing similarly with $M^*+u$, we obtain that
 \begin{align}\label{u2s}
 |u| \leq M^*\quad \text{ in $\R^N\times(0,\infty)$.}
 \end{align}
Then, by standard arguments (see, e.g., \cite[Proposition 7.3.1]{Lu95}), there exists a global bounded, smooth solution $u$ of \eqref{weq}. 
 
Observe that $\widehat w(x,t):= u(x,t)+u(x^{e_1},t)$ solves
\begin{align*}
 \widehat w_t-\Delta \widehat w = \widehat c(x, t) \widehat w\quad \text{ in }\R^N\times(0,\infty),
\end{align*}
where $\widehat c(x,t):=\frac{f(u(x,t))+f(u(x^{e_1},t))}{\widehat w(x,t)}$ if $\widehat w(x,t)\neq 0$ and  $\widehat c(x,t)=0$ if $\widehat w(x,t)=0$. Since $\widehat w(x,0)=u_0(x) + u_0(x^{e_1}) \equiv 0$ in $\R^N_+ := \{x \in \R^N : x_1 > 0\}$ we have that $\widehat w\equiv 0$ in $\R^N \times [0, \infty)$.  In particular, it means that $u(\cdot, t)$ is odd in $x_1$ for all $t > 0$. Therefore, $u(0, x', t) = 0$ for all $t>0$ and $x'\in\R^{N-1}$.  Since $u_0$ is nonnegative and nontrivial in $\R^N_+$, the maximum principle implies that 
\begin{align}\label{upRp}
 u > 0\quad \text{ in $\R_+^N\times(0,\infty)$. }
\end{align}

Next, let
 $s(x) := M^*e^{- \theta (|x| - R^*)}$ with $\theta^2<\Lambda$; then,
 \begin{equation}
 s_t - \Delta s = \left(\theta\frac{N - 1}{|x|}-\theta^2\right)s \qquad \text{for $|x| \geq R^*$}.
 \end{equation}
Therefore, by $(f_5)$, $w := s - u$ satisfies
\begin{align*}
w_t - \Delta w -\int_0^1f_u(|x|,\theta u)\, d\theta\, w
&=  \left(\theta\frac{N - 1}{|x|}-\theta^2-\int_0^1f_u(|x|,su)\, ds\right)\, s
\geq  \left(\Lambda-\theta^2\right)\, s>0
\end{align*}
in $\R^N\setminus B_{R^*}$.  Since $w =  M^* - u \geq 0$ for $x \in \partial B_{R^*}$ and $w(x,0) = s(x)-u_0(x)=s(x)$ for $x \in \R^N \setminus B_{R^*}$, the maximum principle yields that $w>0$ on  $\R^N \setminus B_{R^*} \times (0, \infty)$. Thus, 
\begin{align}\label{U1s}
0 \leq u \leq  M^* e^{- \theta (|x| - R)}\quad \text{ in $\R^N \setminus B_{R^*} \times (0, \infty)$}.
\end{align}

Since $f$ coincides with $g$ on the range of $\zeta$ and $u>0$ on $\partial B\times(0,\infty) \subset \R^N_+\times(0,\infty)$, we have that
\begin{align*}
(u-\zeta)_t-\Delta(u-\zeta)-\frac{g(u)-g(\zeta)}{u-\zeta}(u-\zeta)=0 \quad \text{ in }B\times(0,\infty).
\end{align*}
Moreover, by \eqref{upRp}, $u-\zeta>0$ on $\partial B\times(0,\infty)$ and, by \eqref{u0z}, $u(\cdot,0)-\zeta\geq 0$ in $B$.  Then, by the maximum principle, $u \geq \zeta$ in $B\times(0,\infty)$; in particular,  $0\notin \omega (u)$.  Furthermore, since $u$ is odd in $x_1$, any $z \in \omega(u)$ is odd in $x_1$ as well, and consequently $z$ is not radially symmetric. 

Finally, note that all the assumptions of Theorem \ref{theorem:unbounded} are satisfied; indeed, $(f0)$ and $(f1)$ hold by construction, $(f_2)$ follows from $(f_5)$, $(U_0)'$ from \eqref{u0s}, $(U_1)$ from \eqref{U1s}, and $(U_2)$ from \eqref{u2s}. Therefore, we obtain that all the elements in $\omega(u)$ are strictly foliated Schwarz symmetric.

\subsection{Example 2: An $\omega$-limit set with a strictly foliated Schwarz symmetric function and a radially symmetric element}\label{e2}

In this example we show that the presence of a nonzero radially symmetric element in $\omega(u)$ does \emph{not} imply that all the elements in $\omega(u)$ are radially symmetric. Although we work in an abstract setting  we provide concrete examples. 

Let $D$ be a smooth radial domain, bounded or unbounded, in $\R^N$ and assume that a linear operator $\mathcal{L}$ acting on  smooth functions on $D$ has eigenvalues 
$0 < \lambda_1 < \lambda_2$ corresponding to eigenfunctions $\varphi_1$ and $\varphi_2$ such that $\varphi_1$ is foliated Schwarz symmetric and $\varphi_2$ is radially symmetric. Such choice is possible since there exist radial eigenfunctions corresponding to arbitrarily large eigenvalues. Note that we require the existence of a radial eigenfunction corresponding to a larger eigenvalue than the foliated Schwarz symmetric one. 
To satisfy our assumptions we allow $\mathcal{L}$ to depend only on $|x|$.
Let $u(x,t):= \alpha(t) \varphi_1(x) + \beta(t) \varphi_2(x)$ for $x\in D$ and $t>0$ and, for fixed $\mu \in (\lambda_1, \lambda_2)$ and define
\begin{equation}
f(t, x, u) =  \mu \zeta(t) u  + \psi(t) \varphi_2(x)\qquad x\in D,\ t>0,
\end{equation}
where  $\alpha, \beta, \zeta, \psi : (0, \infty) \to \R$ are chosen below. Note that, since $\varphi_2$ is radially symmetric, $f$ depends on $x$
only through $|x|$. 

First we require that $\alpha(0) = 1$ and $\beta(0) = 0$; therefore, $u_0(x) = \phi_1(x)$ satisfies $(U_0)$.
Also we assume that 
$u$ is a solution of 
\begin{align*}
u_t + \mathcal{L} u = f(t, x, u) \quad \text{ in } D \times (0, \infty), \qquad u(x, t) = 0 \quad \text{ for } x \in \partial D \,,  
\end{align*}
where $u = 0$ on $\partial D$ is interpreted as $u (x) \to 0$ when $|x| \to \infty$ if $D$ is unbounded.

Since the operator is linear and $\varphi_i$ is an eigenfunction we immediately have that $u$ is the desired solution if
\begin{align}
\alpha' + \lambda_1  \alpha = \mu \zeta \alpha, \qquad \beta' + \lambda_2 \beta  =  \mu \zeta \beta  + \psi, \qquad \alpha(0) = 1, \quad \beta(0) = 0 \,. 
\end{align}
Consequently, by the variation of parameters formula, for any $t > t_0 \geq 0$ we have that
\begin{align}\label{dfa}
\alpha(t) &= \alpha(t_0) \exp\left( \int_{t_0}^t \mu \zeta(s) - \lambda_1 \, ds \right) \,, \\ \label{dfb}
\beta(t) &= \int_{t_0}^t \psi(s) \exp\left( \int_s^t  \mu \zeta(r) - \lambda_2\, dr \right) ds + \beta(t_0)  \exp\left( \int_{t_0}^t  \mu\zeta(s) - \lambda_2\, ds \right)  \,.
\end{align}
We finish the argument by constructing sequences $(T_k)_{k \in \mathbb{N}}$, $(\bar{T}_k)_{k \in \mathbb{N}}$ such that 
\begin{equation}\label{efar}
\alpha(T_k) = 1, \quad |\beta(T_k)| \leq \frac{1}{2^k}, \qquad 0 < |\alpha(\bar{T}_k)| \leq \frac{1}{2^k}, \quad 
|\beta(\bar{T}_k) - 1| \leq  \frac{1}{2^k} \,.
\end{equation}

Indeed, then we obtain
\begin{equation}
\lim_{k \to \infty} u(\cdot, T_k) = \varphi_1 \quad \lim_{k \to \infty} u(\cdot, \bar{T}_k) = \varphi_2
\end{equation}
and the statement follows. 

To prove~\eqref{efar}, we proceed by induction and in addition we show that $\zeta(T_n) = 1$ and $\psi(T_n) = 0$ for every $n$. Set $T_1 = 0$
and
assume that we already constructed $T_1 < \bar{T}_1 < T_2  <  \cdots  < \bar{T}_{n-1} < T_n$ 
with the desired properties \eqref{efar}. 

First introduce a short transition period (of order) to shift the values of $\zeta$ and $\psi$ to 0 and $\lambda_2$ respectively. We use linear functions, but 
if the smoothness at the endpoints is required, the transition can me made smooth at the cost of less explicit expressions. Define
\begin{equation}
\zeta(t) = 1 + T_n - t, \qquad \psi(t) = \lambda_2 (t - T_n) \qquad t \in [T_n, T_n + 1) 
\end{equation}
and, by \eqref{dfa} and \eqref{dfb},
\begin{equation}
\alpha(T_n + 1) = \alpha(T_n) e^{\frac{\mu}{2} - \lambda_1} \,, \quad 
\beta(T_n + 1) = c_1
+ \beta(T_n) e^{\frac{\mu}{2} - \lambda_2} \,,
\end{equation}
where $c_1$ is a universal constant independent of $n$. 
Then, for fixed $A_1$ specified below, define 
\begin{equation}
\zeta(t) = 0, \qquad \psi(t) = \lambda_2, \qquad t \in [T_n + 1, T_n + A_1 + 1) \,,
\end{equation}
and set $\bar{T}_n = T_n + 1 + A_1$.  Consequently, by \eqref{dfa} and \eqref{dfb}
\begin{equation}
\alpha(\bar{T}_n) = \alpha(T_n + 1) e^{-\lambda_1 A_1}, \qquad 
\beta(\bar{T}_n) = 1 - e^{-\lambda_2 A_1} + \beta(T_n + 1) e^{-\lambda_2 A_1}\,.
\end{equation}
Thus, for any sufficiently large $A_1$,  $\alpha(\bar{T}_n)$ and $\beta(\bar{T}_n)$ satisfy~\eqref{efar}. 
Next, we introduce the second transition period  that connects $(\zeta (\bar{T}_n), \psi (\bar{T}_n)) = (0, \lambda_2)$ 
to $(1, 0)$. Set 
\begin{equation}
\zeta(t) = t - \bar{T}_n, \qquad \psi(t) = \lambda_2 (\bar{T}_n + 1 - t), \qquad t \in [\bar{T}_n, \bar{T}_n + 1) \,,
\end{equation}
and therefore 
\begin{equation}
\alpha(\bar{T}_n + 1) = \alpha(\bar{T}_n)  e^{\mu/2 - \lambda_1}, \qquad \beta(\bar{T}_n + 1) = 
\beta(\bar{T}_n)c_2 + c_3\,,
\end{equation}
where $c_2, c_3$ are universal constants independent of $n$ or $A_1$ (which can be evaluated explicitly).  Then, set
\begin{equation}
\zeta(t) = 1, \qquad \psi(t) = 0, \qquad t \in [ \bar{T}_n + 1, \bar{T}_n + 1 + A_2) \,,
\end{equation}
where $A_2$, depending on $A_1$ is  specified below.  Denote $T_{n + 1} = \bar{T}_n + 1 + A_2$ and we obtain
\begin{align}
\alpha(T_{n + 1}) &= \alpha(T_n)e^{-\lambda_1A_1} e^{\mu - 2\lambda_1} e^{(\mu - \lambda_1)A_2}, \\
 \beta(T_{n + 1}) &= \beta(\bar{T}_n + 1)  e^{(\mu - \lambda_2) A_2} \,.
\end{align}
Since $\mu >  \lambda_1$, we can set 
\begin{equation}
A_2 = \frac{\lambda_1 A_1 + 2 \lambda_1 - \mu}{\mu - \lambda_1} > 0
\end{equation}
to obtain $\alpha(T_{n + 1}) = \alpha(T_n) = 1$. Also, by making $A_1$ larger if necessary, $\mu < \lambda_2$ yields that 
$\beta(T_{n + 1})$ satisfies~\eqref{efar}. Clearly, $\zeta(T_{n + 1}) = 1$ and $\psi(T_{n + 1}) = 0$ and the proof is complete. 

\begin{exam}
Let $B \subset \R^N$ with $N \geq 2$ be a ball and let $\mathcal{L} = - \Delta$. It is standard to show that such $\mathcal{L}$ has an increasing sequence of eigenvalues diverging to infinity corresponding to the radial eigenfunctions. Furthermore, the first non-radial eigenfunction is foliated Schwarz symmetric. Hence, the assumptions are satisfied and we can find $f$ such that the $\omega$-limit set contains both radial and foliated Schwarz symmetric functions. 
\end{exam}

\begin{exam}
If $D = \R^N$ with $N \geq 2$, we can set $\mathcal{L} = - \Delta + V(|x|^2)$, where $V \geq 0$ and $V(r) \to \infty$ as $r \to \infty$. 
If we denote $L^2(\R^N, V)$ the $L^2$ space with weight $V$, 
then, due to compactness of the embedding $L^2(\R^N) \hookrightarrow H^1(\R^N) \cap L^2(\R^N, V)$ and the spectral theorem, 
one obtains that $\mathcal{L}$ has only discrete spectrum and the eigenvectors span the whole space. 

Also, the method of  separation of variables implies that the 
eigenvalue problem 
\begin{equation}
\mathcal{L} u = \lambda u
\end{equation}
with $u(r, \theta) = R(r) \Theta(\theta)$, $r \in [0, \infty)$, $\theta \in \S$ can be written as 
\begin{equation}
-\Delta_r R + V(r^2) R = \lambda_1 R, \qquad R'(0) = 0
\end{equation}
and 
\begin{equation}
-\Delta_{\theta} \Theta = \lambda_2 \Theta \,,
\end{equation}
where $\Delta_r$ is radial Laplacian and $\Delta_\theta$ is Laplace-Beltrami operator on $\S$. 

It is easy to show that such $\mathcal{L}$ has an increasing sequence of 
eigenvalues diverging to infinity corresponding to the radial eigenfunctions. Furthermore, the first non-radial eigenfunction is foliated Schwarz symmetric. 
Hence, the assumptions are satisfied and we can find $f$ such that $\omega$-limit set contains both radial and foliated Schwarz symmetric functions. 
\end{exam}

\subsection*{Acknowledgments}
J. F\" oldes is partly supported by the National Science Foundation under the grant NSF-DMS-1816408.  A. Saldaña is supported by UNAM-DGAPA-PAPIIT grant IA101721, Mexico.

\bibliographystyle{plain}

\begin{thebibliography}{10}

\bibitem{bere}
H.~Berestycki and L.~Nirenberg.
\newblock On the method of moving planes and the sliding method.
\newblock {\em Bol. Soc. Brasil. Mat. (N.S.)}, 22(1):1--37, 1991.

\bibitem{brock}
F.~Brock.
\newblock Symmetry and monotonicity of solutions to some variational problems
  in cylinders and annuli.
\newblock {\em Electron. J. Differential Equations}, pages No. 108, 20, 2003.

\bibitem{busca}
J.~Busca, M.~A. Jendoubi, and P.~Polacik.
\newblock Convergence to equilibrium for semilinear parabolic problems in
  {$\mathbb R^N$}.
\newblock {\em Comm. Partial Differential Equations}, 27(9-10):1793--1814,
  2002.

\bibitem{ch82}
Ll.~G. Chambers.
\newblock An upper bound for the first zero of {B}essel functions.
\newblock {\em Math. Comp.}, 38(158):589--591, 1982.

\bibitem{f5}
J.~F\"oldes.
\newblock On symmetry properties of parabolic equations in bounded domains.
\newblock {\em J. Differential Equations}, 250(12):4236--4261, 2011.

\bibitem{f6}
J.~F\"oldes.
\newblock Symmetry of positive solutions of asymptotically symmetric parabolic
  problems on {$\mathbb R^N$}.
\newblock {\em J. Dynam. Differential Equations}, 23(1):45--69, 2011.

\bibitem{f2}
J.~F\"oldes.
\newblock On {S}errin's symmetry result in nonsmooth domains and its
  applications.
\newblock {\em Adv. Differential Equations}, 18(5-6):523--548, 2013.

\bibitem{fp}
J.~F\"oldes and P.~Polacik.
\newblock On cooperative parabolic systems: {H}arnack inequalities and
  asymptotic symmetry.
\newblock {\em Discrete Contin. Dyn. Syst.}, 25(1):133--157, 2009.

\bibitem{f4}
J.~F\"oldes and P.~Polacik.
\newblock Convergence to a steady state for asymptotically autonomous
  semilinear heat equations on {$\mathbb R^N$}.
\newblock {\em J. Differential Equations}, 251(7):1903--1922, 2011.

\bibitem{f3}
J.~F\"oldes and P.~Polacik.
\newblock On asymptotically symmetric parabolic equations.
\newblock {\em Netw. Heterog. Media}, 7(4):673--689, 2012.

\bibitem{f1}
J.~F\"oldes and P.~Polacik.
\newblock Equilibria with a nontrivial nodal set and the dynamics of parabolic
  equations on symmetric domains.
\newblock {\em J. Differential Equations}, 258(6):1859--1888, 2015.

\bibitem{GPW10}
F.~Gladiali, F.~Pacella, and T.~Weth.
\newblock Symmetry and nonexistence of low {M}orse index solutions in unbounded
  domains.
\newblock {\em J. Math. Pures Appl. (9)}, 93(5):536--558, 2010.

\bibitem{lieberman}
G.M. Lieberman.
\newblock {\em Second order parabolic differential equations}.
\newblock World Scientific Publishing Co., Inc., River Edge, NJ, 1996.

\bibitem{Lu95}
A.~Lunardi.
\newblock {\em Analytic semigroups and optimal regularity in parabolic
  problems}.
\newblock Modern Birkh\"{a}user Classics. Birkh\"{a}user/Springer Basel AG,
  Basel, 1995.
\newblock [2013 reprint of the 1995 original] [MR1329547].

\bibitem{polacik:unbounded}
P.~Polacik.
\newblock Symmetry properties of positive solutions of parabolic equations on
  {$\mathbb R^N$}. {I}. {A}symptotic symmetry for the {C}auchy problem.
\newblock {\em Comm. Partial Differential Equations}, 30(10-12):1567--1593,
  2005.

\bibitem{polacik}
P.~Polacik.
\newblock Estimates of solutions and asymptotic symmetry for parabolic
  equations on bounded domains.
\newblock {\em Arch. Ration. Mech. Anal.}, 183(1):59--91, 2007.

\bibitem{PY04}
P.~Pol\'{a}\v{c}ik and E.~Yanagida.
\newblock Nonstabilizing solutions and grow-up set for a supercritical
  semilinear diffusion equation.
\newblock {\em Differential Integral Equations}, 17(5-6):535--548, 2004.

\bibitem{protter}
M.H. Protter and H.F. Weinberger.
\newblock {\em Maximum principles in differential equations}.
\newblock Springer-Verlag, New York, 1984.
\newblock Corrected reprint of the 1967 original.

\bibitem{QS07}
P.~Quittner and Ph. Souplet.
\newblock {\em Superlinear parabolic problems}.
\newblock Birkh\"{a}user Advanced Texts: Basler Lehrb\"{u}cher. [Birkh\"{a}user
  Advanced Texts: Basel Textbooks]. Birkh\"{a}user Verlag, Basel, 2007.
\newblock Blow-up, global existence and steady states.

\bibitem{torino}
A.~Salda\~na.
\newblock Partial symmetry of solutions to parabolic problems via reflection
  methods.
\newblock {\em Rend. Sem. Mat. Univ. Politec. Torino}, 74(2):105--112, 2016.

\bibitem{saldana:2016}
A.~Salda\~na.
\newblock Qualitative properties of coexistence and semi-trivial limit profiles
  of nonautonomous nonlinear parabolic {D}irichlet systems.
\newblock {\em Nonlinear Anal.}, 130:31--46, 2016.

\bibitem{saldana-weth}
A.~Salda\~na and T.~Weth.
\newblock Asymptotic axial symmetry of solutions of parabolic equations in
  bounded radial domains.
\newblock {\em J. Evol. Equ.}, 12(3):697--712, 2012.

\bibitem{saldana:2015}
A.~Salda\~na and T.~Weth.
\newblock On the asymptotic shape of solutions to {N}eumann problems for
  non-cooperative parabolic systems.
\newblock {\em J. Dynam. Differential Equations}, 27(2):307--332, 2015.

\bibitem{wethsurvey}
T.~Weth.
\newblock Symmetry of solutions to variational problems for nonlinear elliptic
  equations via reflection methods.
\newblock {\em Jahresber. Dtsch. Math.-Ver.}, 112(3):119--158, 2010.

\end{thebibliography}

\end{document}